\tikzset{
  nw/.style={pattern=north west lines, line width=0.1pt, pattern color=#1},
  nw/.default=black
}
\tikzset{
  ne/.style={pattern=north east lines,line width=0.1pt, pattern color=#1},
  ne/.default=black
}
\tikzstyle{every picture}+=[font=\footnotesize]
\numberwithin{equation}{section}
\numberwithin{figure}{section}
\numberwithin{table}{section}
\newtheorem{theorem}{Theorem}[section]
\newtheorem{lemma}[theorem]{Lemma}
\newtheorem{corollary}[theorem]{Corollary}
\newtheorem{definition}[theorem]{Definition}
\theoremstyle{break}
\theoremstyle{plain}
\newtheorem{remark}[theorem]{Remark}
\theoremstyle{plain}
\newtheorem*{proof}{Proof}
\newtheorem*{proofof}{Proof of}
\algrenewcommand\algorithmicrequire{\textbf{\ Input:}}
\algrenewcommand\algorithmicensure{\textbf{\ Output:}}
\let\oldbullet\bullet
  \newlength{\raisebulletlen}
  \renewcommand\bullet{\raisebox{\raisebulletlen}{\,\tiny$\oldbullet$}\,}
\tikzset{
    hatchColor/.store in=\hatchColor, hatchColor=gray
}
\tikzset{slopetriangle/.style={
  bottom color=black!20,
  middle color=black!5,
  top color=white,
  draw=black
}}
\def\Xint#1{\mathchoice
{\XXint\displaystyle\textstyle{#1}}%
{\XXint\textstyle\scriptstyle{#1}}%
{\XXint\scriptstyle\scriptscriptstyle{#1}}%
{\XXint\scriptscriptstyle\scriptscriptstyle{#1}}%
\!\int}
\def\XXint#1#2#3{{\setbox0=\hbox{$#1{#2#3}{\int}$}
\vcenter{\hbox{$#2#3$}}\kern-.5\wd0}}
\newcommand{\intmean}{\Xint-}
\definecolor{dkgreen}{rgb}{0,0.6,0}
\definecolor{gray}{rgb}{0.5,0.5,0.5}
\definecolor{mauve}{rgb}{0.58,0,0.82}
\tiny\color{gray},
\lstdefinestyle{fullsrcsmall}{
  basicstyle=\small\ttfamily,
  xleftmargin=1em,
}
\lstdefinestyle{fullsrcfnsize}{
  basicstyle=\footnotesize\ttfamily,
  xleftmargin=2em,
}
\lstdefinestyle{fullsrcscsize}{
  basicstyle=\scriptsize\ttfamily,
  numbers=none,
  xleftmargin=2em,
}
\lstdefinestyle{inline}{
  basicstyle=\ttfamily,
  numbers=none,
  xleftmargin=0em,
}
\lstdefinestyle{inlinesmall}{
  basicstyle=\small\ttfamily,
  numbers=none,
  xleftmargin=0em,
}
\lstdefinestyle{inlinefnsize}{
  basicstyle=\footnotesize\ttfamily,
  numbers=none,
  xleftmargin=1em,
}
\crefname{subsection}{Subsection}{Subsections}
\crefname{subsubsection}{Subsection}{Subsections}
\crefname{AFEM4EVP}{AFEM4EVP}{AFEM4EVP}
\crefname{AFEM}{AFEM}{AFEM}
\newcommand{\mylabel}[2]{#2\def\@currentlabel{#2}\label{#1}}
\title{Adaptive guaranteed lower eigenvalue bounds with optimal convergence rates
}
\author{
Carsten Carstensen\footnote{Department of Mathematics, Humboldt-Universit\"at zu Berlin, Unter den Linden 6, 10099 Berlin, Germany. \href{mailto:cc@math.hu-berlin.de}{cc@math.hu-berlin.de} 
and \href{mailto:puttkams@math.hu-berlin.de}{puttkams@math.hu-berlin.de}}
\and
Sophie Puttkammer$^*$
}
\date{}
\newcommand{\SPnew}[1]{\textcolor{black}{#1}}
\begin{document}

\maketitle
\begin{abstract}\vspace{-1cm}
Guaranteed lower Dirichlet eigenvalue bounds (GLB) can be computed for the $m$-th Laplace operator with a recently introduced 
extra-stabilized nonconforming Crouzeix-Raviart ($m=1$) or Morley ($m=2$)
finite element eigensolver. 
Striking numerical evidence for the superiority of a new adaptive eigensolver
motivates the convergence analysis in this paper with a proof of optimal convergence rates 
of the GLB  towards a simple eigenvalue. The proof is based on (a generalization of) known abstract 
arguments entitled as the axioms of adaptivity. Beyond the known a priori 
convergence rates, a medius analysis is enfolded in this paper for the proof of
best-approximation results. This and subordinated $L^2$ error estimates for locally refined triangulations appear of independent interest. 
The analysis of optimal convergence rates of an adaptive mesh-refining algorithm is performed in $3$D and highlights a new version of discrete reliability. 
\end{abstract}
\section{Introduction}\label{sec:introduction}
	\textbf{Motivation.} 
		Guaranteed lower Dirichlet eigenvalue bounds (GLB) can be computed for the $m$-th Laplace operator from a global postprocessing of 
		respective nonconforming finite element eigensolvers 
		like the Crouzeix-Raviart resp.\ Morley finite element method (FEM) for $m=1$ resp.\ $m=2$ \cite{CGal14,CGed14}.  
		The maximal mesh-size $h_{\max}$ enters as an explicit parameter and this can be non-effective for an imperative adaptive mesh-refinement. 
		This has recently motivated the design of extra-stabilized nonconforming finite element eigensolvers for $m=1,2$ that directly compute GLB 
		under moderate mesh-size restrictions and allow an efficacious adaptive mesh-refinement \cite{CZZ18,CEP20,CP_Part1}. 
		The striking superiority of those adaptive schemes has been displayed in numerical experiments in \cite{CEP20,CP_Part1} 
		and motivates the mathematical analysis of optimal convergence rates in this paper. This appears to be the first method that combines the localization of eigenvalues as GLB with their efficient approximation. 
	\\[1ex]\textbf{Model problem.} 
		The continuous eigenvalue problem (EVP) seeks eigenpairs $(\lambda, u) \in \mathbb{R}^+\times (V\setminus\{0\})$ with
		 	\begin{align}
			 	a(u,v)= \lambda\, b(u,v)\quad \text{for all } v\in V \label{eq:contEVP}
			 \end{align}
		 in the Hilbert space $V:=H^{m}_0(\Omega)$ and its energy scalar product $a(\bullet,\bullet):=(D^m\bullet, D^m\bullet)_{L^2(\Omega)}$ 
		 with the gradient $D^1:=\nabla$ or the Hessian $D^2$ and the $L^2$ scalar product 
		 $b(\bullet,\bullet):=(\bullet, \bullet)_{L^2(\Omega)}$ on a bounded polyhedral Lipschitz domain $\Omega\subset\mathbb{R}^3$.
		 The infinite but countably many eigenvalues $0<\lambda_1\le\lambda_2\le\dots$ with $\lim_{j\to \infty}\lambda_j=\infty$ in \eqref{eq:contEVP} 
		 are enumerated in ascending order counting multiplicities \cite{BO91,Boffi2010}.
	\\[1ex]\textbf{Discretization.} 
		 The discrete space $\boldsymbol{V_h}\hspace{-.2em}= \hspace{-.2em}P_{m}(\mathcal{T})\hspace{-.1em}\times \hspace{-.1em}V({\mathcal{T}})
		 \hspace{-.2em}\subset\hspace{-.2em}  P_{m}(\mathcal{T})\hspace{-.1em}\times\hspace{-.1em} P_{m}(\mathcal{T})$ 
		 consists of piecewise polynomials of degree at most ${m}$ on the shape-regular triangulation $\mathcal{T}$ of 
		 $\Omega\subset\mathbb{R}^3$ into closed 
		 tetrahedra.  
		 Throughout this paper, $V(\mathcal{T})$ abbreviates the Crouzeix-Raviart finite element space $\textit{CR}^1_0(\mathcal{T})$ 
		 \cite{CR73} for $m=1$ and the Morley finite element space $M(\mathcal{T})$ \cite{Mor68,MX06} for $m=2$. 
		 The algebraic eigenvalue problem seeks eigenpairs 
		 $(\lambda_h, \boldsymbol{u_h}) \in \mathbb{R}^+\times (\boldsymbol{V_h}\setminus\{0\})$ with 
			\begin{align}
					\boldsymbol{a_h}(\boldsymbol{u_h},\boldsymbol{v_h})
						=\lambda_h \boldsymbol{b_h}(\boldsymbol{u_h},\boldsymbol{v_h})\quad\text{for all }\boldsymbol{v_h}\in\boldsymbol{V_h}. 			 	
						\label{eq:dis_EVP_alt}
			\end{align} 
		The discrete scalar product $\boldsymbol{a_h}$ contains the  scalar product 
		$a_{\mathrm{pw}}(\bullet, \bullet):=(D^m_\mathrm{pw}\bullet,D^m_\mathrm{pw}\bullet)_{L^2(\Omega)}$ of the piecewise derivatives of 
		order $m$ and some stabilization with explicit (known) constant $\kappa_m>0$ from \cite{CP_Part1}, while the bilinear form 
		$\boldsymbol{b_h}$ is the $L^2$ scalar product $b(\bullet,\bullet)$ of the piecewise polynomial components, 
			\begin{align*}
					\boldsymbol{a_h}(\boldsymbol{v_h},\boldsymbol{w_h})&=a_{\mathrm{pw}}(v_{\mathrm{nc}},w_{\mathrm{nc}})
						+\kappa_{m}^{-2}(h_{\mathcal{T}}^{-2{m}}(v_{\mathrm{pw}}-v_{\mathrm{nc}}), w_{\mathrm{pw}}-w_{\mathrm{nc}})_{L^2(\Omega)},\\
					\boldsymbol{b_h}(\boldsymbol{v_h},\boldsymbol{w_h})&=b(v_{\mathrm{pw}},w_{\mathrm{pw}})
					\qquad\text{for all }\boldsymbol{v_h}=(v_{\mathrm{pw}},v_{\mathrm{nc}}),\, 
					\boldsymbol{w_h}=(w_{\mathrm{pw}},w_{\mathrm{nc}})\in\boldsymbol{V_h}.
				\end{align*} 
		The piecewise constant mesh-size function $h_{\mathcal{T}}\in P_0(\mathcal{T})$ has the value 
		$h_{\mathcal{T}}|_T=h_T:=\textup{diam}(T)$ in each tetrahedron $T\in\mathcal{T}$ and $h_{\max}:=\max_{T\in\mathcal{T}}h_T$ denotes the maximal mesh-size. 
		The $M:=\textup{dim}(P_m(\mathcal{T}))$ finite discrete eigenvalues of \eqref{eq:dis_EVP_alt} are enumerated in ascending order    
		$0< \lambda_h(1)\le \lambda_h(2)\le\dots\le \lambda_h(M)<\infty$ counting multiplicity. 
	\\[1ex]\textbf{GLB.}
		For the biharmonic operator ($m=2$) the discrete eigenvalue problem \eqref{eq:dis_EVP_alt} is analysed in \cite{CP_Part1}.
		For the Laplace operator ($m=1$) in $2$D, \eqref{eq:dis_EVP_alt} describes the lowest-order skeleton method in \cite{CZZ18}; 
		for $3$D it is different and suggested in \cite{CP_Part1}. 
		The discrete eigenvalue problem \eqref{eq:dis_EVP_alt} directly computes  guaranteed lower bounds \cite[Thm.~1.1]{CP_Part1}
		in that  
			\begin{align}
				\min\{\lambda_h({k}),\lambda_k\} \kappa^2_m h_{\max}^{2{m}}\le 1 \quad\text{ implies } \quad\lambda_h({k})\le \lambda_{k}
				\quad\text{ for all }k=1,\dots, M. 			
					\label{eq:GLB_cond}
			\end{align} 
	\textbf{AFEM.} 	
		The adaptive algorithm 
		\cite{Doerfler1996,MorinNochettoSiebert2002,CFPP14,CR16} is based on the refinement indicator $\eta(T)$ defined in 
		\eqref{eq:def_eta} below for any triangulation $\mathcal{T}$ and any tetrahedron $T\in\mathcal{T}$.
		Let  $\big(\lambda_h, \boldsymbol{u_h}\big)\in \mathbb{R}^+\times \boldsymbol{V_h}$ denote the $k$-th eigenpair of 
		\eqref{eq:dis_EVP_alt} with  $\lambda_h:=\lambda_h(k)$ and 	
		$\boldsymbol{u_h}=(u_{\mathrm{pw}},u_{\mathrm{nc}})\in  \boldsymbol{V_h}$.
		For any tetrahedron  $T\in\mathcal{T}$ 
		with volume $|T|$ and set of faces $\mathcal{F}(T)$, the local estimator contribution $\eta^2(T)=(\eta(T))^2$ reads
			\begin{align}
				\eta^2(T) = |T|^{2{m}/3}\Vert \lambda_h u_{\mathrm{nc}} \Vert^2_{L^2(T)}
												+|T|^{1/3}\sum_{F\in\mathcal{F}(T)}\Vert [{D}^{m}_{\mathrm{pw}}  u_{\mathrm{nc}}]_F\times \nu_F\Vert^2_{L^2(F)} 		
												\label{eq:def_eta}
			\end{align}
		with the tangential components $[D^m_{\mathrm{pw}} u_{\mathrm{nc}}]_F\times \nu_F$ of the jump $[D^m_{\mathrm{pw}}  u_{\mathrm{nc}}]_F$ along any face 
		$F\in\mathcal{F}(T)$ and  the (piecewise) gradient $D^1_{\mathrm{pw}}\hspace{-.2em}=\hspace{-.2em}\nabla_{\mathrm{pw}}$ (${m}\hspace{-.2em}=\hspace{-.2em}1$) or Hessian $D^2_{\mathrm{pw}}$ (${m\hspace{-.2em}}=\hspace{-.2em}2$). 
		Let $\mathbb{T}:=\mathbb{T}(\mathcal{T}_0)$ denote the set of all admissible regular triangulations 
		computed by successive newest-vertex bisection (NVB) \cite{Stev08,GSS14} 
		of a regular initial triangulation $\mathcal{T}_0$  of $\Omega\subset\mathbb{R}^3$. 
		The AFEM algorithm  with  D\"orfler marking and newest-vertex bisection 
			abbreviates  $\eta_\ell(T)$ for any $T\in\mathcal{T}:=\mathcal{T}_\ell\in\mathbb{T}$  and 
			$\eta_\ell^2:=\eta^2(\mathcal{T}_\ell):=\sum_{T\in\mathcal{T}_\ell}\eta_\ell^2(T)$. 
			The selection of the set $\mathcal{M}_\ell$ in the step Mark of \namecref{alg:AFEM4EVP}   
			with \emph{minimal cardinality} is possible at linear cost \cite{PfeilerPraetorius2020}. 
			
				\begin{algorithm}[H]							
					\caption{}\label[AFEM4EVP]{alg:AFEM4EVP}
					\begin{algorithmic}
					\Require regular triangulation $\mathcal{T}_0$ and parameters $0<\theta\leq 1$ and ${k}\in\mathbb{N}$ 
					\For{$\ell = 0,1,2,\dots$}
					\State\hspace{-1em} \textbf{Solve} the discrete problem \eqref{eq:dis_EVP_alt} exactly and compute the $k$-th algebraic eigenpair   
					\\\hspace{1.5cm}$(\lambda_\ell({k}), \boldsymbol{u_\ell}({k}))$ with 
					$\boldsymbol{u_\ell}({k})=(u_{\mathrm{pw}},u_{\mathrm{nc}})\in P_{m}(\mathcal{T}_\ell)\times V({\mathcal{T}}_\ell)$ 
					and $\mathcal{T}$ replaced by $\mathcal{T}_\ell$
					\State\hspace{-1em} \textbf{Compute} $\eta_\ell(T)$ for any $T\in\mathcal{T}_\ell$  from \eqref{eq:def_eta} 
					with $(\lambda_h,u_{\mathrm{nc}},\mathcal{T})$ 
					replaced by $(\lambda_\ell(k),u_{\mathrm{nc}},\mathcal{T}_\ell)$ 
					\State\hspace{-1em} \textbf{Mark} minimal subset $\mathcal{M}_\ell\subseteq\mathcal{T}_\ell$ with 
					$\theta\eta_\ell^2 \leq \sum_{T\in\mathcal{M}_\ell}\eta_\ell^2(T)$
					\State\hspace{-1em} \textbf{Refine} $\mathcal{T}_\ell$ with newest-vertex bisection to 
					compute $\mathcal{T}_{\ell+1}$ with $\mathcal{M}_\ell\subseteq \mathcal{T}_\ell \setminus \mathcal{T}_{\ell+1}$ \textbf{od}
					\EndFor
					\Ensure sequence of triangulations $(\mathcal{T}_\ell)_{\ell\in\mathbb{N}_0}$ with 
					$(\lambda_\ell({k}), \boldsymbol{u_\ell}({k}))_{\ell\in\mathbb{N}_0}$ and $(\eta_\ell)_{\ell\in\mathbb{N}_0}$
					\end{algorithmic}
					\end{algorithm}
		\textbf{Optimal convergence rates.}
		The optimal convergence rates of \namecref{alg:AFEM4EVP} in 
		the error estimator 
		means that the outputs \((\mathcal{T}_\ell)_{\ell\in\mathbb{N}_0}\)  and \((\eta_\ell)_{\ell\in\mathbb{N}_0}\) 
		of	\namecref{alg:AFEM4EVP} satisfy  
			\begin{equation}
		 		\sup_{\ell\in\mathbb{N}_0} (1+\abs{\mathcal{T}_\ell} - \abs{\mathcal{T}_0})^s \eta_\ell 
		 		\approx  \sup_{N\in\mathbb{N}_0} (1+N)^s \min\{\eta(\mathcal{T}):\, \mathcal{T}\in\mathbb{T} 
		 		\text{ with } |\mathcal{T}|\le |\mathcal{T}_0|+ N\}
		 		\label{eq:Optimality}
			\end{equation}
		 for any \(s>0\) and the counting measure \(\vert\bullet\vert=\textup{card}(\bullet)\). 
		In other words, if the estimator $\eta(\mathcal{T})$ 
		converges with rate $s>0$ for some optimal selection of triangulations \(\mathcal{T}\in\mathbb{T}\), 
		then the output $\eta_\ell$ of \namecref{alg:AFEM4EVP} converges with the same rate. 
		
		\begin{theorem}[rate optimality of \namecref{alg:AFEM4EVP}]\label{thm:Optimality4GLB}
			Suppose that $\lambda_k=\lambda$ is a simple eigenvalue of \eqref{eq:contEVP}, then
			there exist $\varepsilon>0$ and $0<\theta_0<1$ such that 
			$\mathcal{T}_0\in \mathbb{T}(\varepsilon):=\{\mathcal{T}\in\mathbb{T}:\,h_{\max}:=\max_{T\in\mathcal{T}}h_T\le \varepsilon\}$ and 
			 $\theta$ with 
			$0<\theta \le \theta_0$ imply  \eqref{eq:Optimality} 
			for any $s>0$. 
		\end{theorem}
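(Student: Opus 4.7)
The plan is to verify the axioms of adaptivity of \cite{CFPP14} and to combine them with a quasi-optimality estimate obtained by medius analysis. Since $\lambda_k=\lambda$ is simple, the assumption $h_{\max}\le\varepsilon$ together with the GLB property \eqref{eq:GLB_cond} places the algorithm in a linearisation regime: for any sufficiently small $\varepsilon>0$ and any $\mathcal{T}\in\mathbb{T}(\varepsilon)$, the $k$-th discrete eigenpair $(\lambda_h(k),\boldsymbol{u_h}(k))$ is close to $(\lambda,u)$ (in particular $\lambda_h(k)$ is well-separated from the remaining discrete spectrum and $\boldsymbol{u_h}$ is $\boldsymbol{a_h}$-bounded). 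This a priori control allows to treat $\eta_\ell$ as a quasi-error for the linear source problem with data $\lambda u_{\mathrm{nc}}$ and is a standard prerequisite imported from \cite{CP_Part1}.

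\textbf{Axioms A1 (stability) and A2 (reduction).} For any refinement $\widehat{\mathcal{T}}$ of $\mathcal{T}\in\mathbb{T}$, the two contributions to $\eta^2(T)$ in \eqref{eq:def_eta} are weighted volume and face integrals of piecewise polynomials. Stability of $\eta$ with respect to perturbations in $(\lambda_h,u_{\mathrm{nc}})$ follows from inverse and trace inequalities on shape-regular meshes. Reduction of $\eta$ on refined tetrahedra uses the geometric mesh-size decrease of newest-vertex bisection together with a Young-type splitting of the jump term that distinguishes refined from unrefined faces; these arguments only need minor adaptations of those in \cite{CR16}.

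\textbf{Discrete reliability and medius analysis.} The principal new ingredient is a discrete reliability estimate of the form $\|\boldsymbol{u_{\widehat{h}}}-\boldsymbol{u_h}\|_{\boldsymbol{a_h}}^2\lesssim\sum_{T\in\mathcal{T}\setminus\widehat{\mathcal{T}}}\eta^2(T)+\text{h.o.t.}$, which is non-standard here because $\boldsymbol{V_h}=P_m(\mathcal{T})\times V(\mathcal{T})$ combines a piecewise polynomial and a nonconforming component, both of which must be controlled simultaneously by an interpolation operator whose support is confined to $\mathcal{T}\setminus\widehat{\mathcal{T}}$. The plan is to construct a two-level quasi-interpolant exploiting the bisection tree structure; this yields the \emph{new version of discrete reliability} announced in the abstract. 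To match this against the benchmark $\min\{\eta(\mathcal{T}):\mathcal{T}\in\mathbb{T},\,|\mathcal{T}|\le|\mathcal{T}_0|+N\}$, a quasi-optimality (best-approximation) bound of the type $\eta^2(\mathcal{T})\lesssim \min_{\boldsymbol{v_h}\in\boldsymbol{V_h}}\|\boldsymbol{u_h}-\boldsymbol{v_h}\|_{\boldsymbol{a_h}}^2+\text{h.o.t.}$ is proved by medius analysis: a conforming companion $Ju_{\mathrm{nc}}\in H_0^m(\Omega)$ is constructed and compared to $u_{\mathrm{nc}}$ via mixed a priori / a posteriori arguments. The accompanying higher-order terms involve $L^2$-errors that, on locally refined triangulations, require a refined Aubin--Nitsche duality respecting the local mesh-size $h_\mathcal{T}$ rather than $h_{\max}$; these subordinated $L^2$ estimates are of independent interest and are established separately.

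\textbf{Quasi-orthogonality and conclusion.} Quasi-orthogonality of the $\boldsymbol{a_h}$-errors along the sequence $(\mathcal{T}_\ell)$ is derived from a Pythagoras-type identity for nested discrete spaces with a perturbation that can be absorbed by the $L^2$ higher-order term, using the simplicity of $\lambda$ and the a priori contraction of $|\lambda_\ell(k)-\lambda|$. Once A1, A2, discrete reliability, quasi-orthogonality and quasi-optimality are in place, the abstract framework of \cite{CFPP14} delivers \eqref{eq:Optimality} for every $s>0$ as soon as $\theta\le\theta_0$ for a threshold $\theta_0$ determined by the stability, reduction and reliability constants. The main obstacle is expected to be the new discrete reliability step: the two components $(v_{\mathrm{pw}},v_{\mathrm{nc}})$ in $\boldsymbol{V_h}$ interact through the extra-stabilisation weighted by $\kappa_m^{-2}h_{\mathcal{T}}^{-2m}$, so the localised interpolant must preserve this weighted coupling while being supported on $\mathcal{T}\setminus\widehat{\mathcal{T}}$ — an issue absent from classical nonconforming AFEM analyses.
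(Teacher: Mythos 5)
Your outline reproduces the general architecture (axioms of adaptivity plus a medius/best-approximation result), but at the decisive step it takes a route that the paper explicitly identifies as the obstruction and then avoids. You propose to prove discrete reliability for the full product error $\Vert \boldsymbol{u_{\widehat h}}-\boldsymbol{u_h}\Vert_{\boldsymbol{a_h}}$, i.e.\ including the stabilization $\kappa_m^{-2}\Vert h^{-m}(v_{\mathrm{pw}}-v_{\mathrm{nc}})\Vert_{L^2}$, and to control it by $\eta$ restricted to $\mathcal{T}\setminus\widehat{\mathcal{T}}$ via a new two-level quasi-interpolant "preserving the weighted coupling". But the estimator \eqref{eq:def_eta} contains no stabilization contribution at all, the weight $\kappa_m$ is fixed by the GLB requirement (so the over-penalization trick of Bonito--Nochetto is unavailable), and the negative mesh-size powers destroy the reduction property; this is precisely why the paper does \emph{not} measure the distance in $\boldsymbol{a_h}$. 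Instead it passes to the equivalent rational eigenvalue problem \eqref{eq:disEVP} in the nonconforming component alone (with $u_{\mathrm{pw}}=(1-\lambda_h\kappa_m^2h_{\mathcal{T}}^{2m})^{-1}u_{\mathrm{nc}}$), defines the distance \eqref{eq:def_dist_delta} purely in terms of $u_{\mathrm{nc}}$ and $\lambda_h u_{\mathrm{nc}}$, and absorbs the stabilization through the weight $\delta\le C_\delta h_{\mathcal{T}}^{2m}$ and the efficiency bound \eqref{eq:ControlOfDeltaUh}; the localization uses the already available companion $J$ with \ref{item:J_T_That}, not a new interpolant. As written, your plan gives no mechanism by which the stabilization part of the two-level error could be bounded by jump and volume terms of $\eta$ on refined elements, so the central step would fail or at least remains entirely open.

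Two further gaps: (i) your quasiorthogonality rests on "a Pythagoras-type identity for nested discrete spaces", but the Crouzeix--Raviart and Morley spaces are not nested ($V(\mathcal{T})\not\subset V(\widehat{\mathcal{T}})$) and $a_{\mathrm{pw}}$ changes with the mesh; the paper instead proves the two-level estimate of \cref{lem:towardsQuasiOptimality} via $I$, $\widehat I$, $J$ and the $h_{\max}^{\sigma}$-weighted $L^2$ bounds, and then sums a telescoping series absorbed by reliability and efficiency. (ii) Even after these repairs, the resulting axioms only hold in perturbed form, with extra terms $\epsilon_3\,\eta^2(\mathcal{T})$ in discrete reliability and $\epsilon_4\sum_j\eta_j^2$ in quasiorthogonality, so the off-the-shelf framework of \cite{CFPP14} you invoke does not apply directly; one needs the generalized optimality theorem (\cref{thm:Optimality} in \cref{sec:Appendix}), whose smallness conditions on $\epsilon_3,\epsilon_4$ are exactly what forces the mesh-size threshold $\varepsilon$ and the bulk-parameter bound $\theta_0$ in the statement. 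Also note the paper's subordinated $L^2$ estimate \eqref{eq:L2errorEnergyError} scales with the global $h_{\max}^{\sigma}$ on arbitrary admissible triangulations; a duality argument "respecting the local mesh-size" is neither used nor needed.
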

		
At first glance the discrete problem \eqref{eq:dis_EVP_alt} involves a stabilization that is expected to generate the additional 
term $\kappa_m^{-2}|T|^{-2m/3}\Vert u_{\mathrm{pw}}-u_{\mathrm{nc}}\Vert^2_{L^2(T)}$
in  the error estimator \eqref{eq:def_eta}.  The  negative power of the mesh-size in the latter term prevents
a reduction property \cite{MorinNochettoSiebert2002,CFPP14,CR16} and has to be circumvented. 
The only other known affirmative result for optimal convergence rates of an adaptive algorithm with stabilization (and negative powers of the mesh-size in the discrete problem) is \cite{BonitoNochetto2010} on discontinuous Galerkin (dG) schemes. An over-penalization therein diminishes the influence of the stabilization and eventually shows the dominance of the remaining a posteriori error terms. In the present case, the stabilization parameter $\kappa_m$ is fixed to maintain the GLB property and this requires a different argument: Since \eqref{eq:dis_EVP_alt} is equivalent to a rational eigenvalue problem for a nonconforming scheme, a careful perturbation analysis eventually shows efficiency and reliability of the nonconforming error estimator  \eqref{eq:def_eta} for sufficiently small mesh-sizes. The verification requires a medius analysis  \cite{Gudi10}, which applies  arguments from a~posteriori 
 error analysis  (e.g., efficiency in  \eqref{eq:ControlOfDeltaUh} below) in an a~priori error analysis. 

\medskip

		\textbf{Outline.}
		The remaining parts of this paper are devoted to the proof of \cref{thm:Optimality4GLB} and are organized as follows. 
		A general interpolation operator $I$ and a right-inverse $J$ in \cref{sec:preliminaries} allow for a simultaneous analysis for 
		$m=1$ and $m=2$ in the Crouzeix-Raviart and Morley FEM. 
		The  medius analysis in \cref{sec:apriori}  provides new best-approximation results and thereby prepares the proof of \cref{thm:Optimality4GLB}  in 
		\cref{sec:aposteriori}--\ref{sec:optimality4evp}. 
		The proof of the optimal convergence rates requires a  framework extended from  
		\cite{CFPP14,CR16} in \cref{sec:Appendix}. 
		{\medskip\\
		The results hold in $2$D and $3$D and are presented in $3$D for brevity. }
\section{Preliminaries}\label{sec:preliminaries}
			This section summarizes abstract conditions  \ref{item:I_kappa}--\ref{item:I_kappa_disc} 
			on an interpolation operator $I:V\to V(\mathcal{T})$ and \ref{item:J_rightInverse}--\ref{item:J_T_That} 
			on a right inverse $J:V(\mathcal{T})\to V$. The conditions hold for the Crouzeix-Raviart and the Morley finite element space in 
			the two model  examples for the Laplacian ${m}=1$  and the bi-Laplacian ${m}=2$.
		\subsection{Notation}\label{sec:notations}
			Standard notation on Lebesgue and Sobolev spaces applies throughout this paper; $(\bullet,\bullet)_{L^2(\Omega)}$ abbreviates
			the $L^2$ scalar product and $H^{m}(T)$ 
			abbreviates $H^{m}(\textup{int}(T ))$ for a tetrahedron $T\in\mathcal{T}$.
			The vector space $H^{m}(\mathcal{T}):=\{v\in L^2(\Omega):\, v|_T\in H^{m}(T)\}$ consists of  
			piecewise $H^{m}$ functions and is equipped with the 
			semi-norm 
			$\vvvert\bullet\vvvert_{\mathrm{pw}}^2
			:=(D^m_{\mathrm{pw}}\bullet,D^m_{\mathrm{pw}}\bullet)_{L^2(\Omega)}$. The piecewise gradient $D^1_{\mathrm{pw}}$ 
			or piecewise Hessian $D^2_{\mathrm{pw}}$ 
			is understood with respect to 
			the (non-displayed) regular triangulation $\mathcal{T}\in\mathbb{T}$ of the bounded polyhedral Lipschitz domain  
			$\Omega\subset\mathbb{R}^3$ into tetrahedra. 	
			The triangulation $\mathcal{T}$ is computed by successive newest-vertex bisection (NVB) \cite{Stev08,GSS14} 
			of a regular initial triangulation $\mathcal{T}_0$ (plus some initialization of tagged tetrahedra) 
			of $\Omega\subset\mathbb{R}^3$.
			The set  $\mathbb{T}:=\mathbb{T}(\mathcal{T}_0)$ of all admissible triangulations is (uniformly) shape-regular. 
			For any $\mathcal{T}\in\mathbb{T}$, let $\mathbb{T}(\mathcal{T})$ abbreviate the set of all admissible refinements of 
			$\mathcal{T}$. 
			 For any $0<\varepsilon<1$ let 
			 $\mathbb{T}(\varepsilon):=\{\mathcal{T}\in\mathbb{T}:\,h_{\max}:=\max_{T\in\mathcal{T}}h_T\le \varepsilon\}$ 
			denote the set of all admissible triangulations with maximal mesh-size $h_{\max}\le\varepsilon$.	
			The context-depending notation $|\bullet|$ denotes the Euclidean length of a vector, the cardinality of 
			a finite set, as well as the non-trivial \mbox{three-,} two-, or one-dimensional Lebesgue measure of a subset of  
			$\mathbb{R}^3$. 
			For any  positive, piecewise polynomial $\varrho\in P_k(\mathcal{T})$ with $\varrho\ge 0$, 
			$k\in\mathbb{N}_0$, 
			$(\bullet, \bullet)_\varrho:=(\varrho\bullet,\bullet)_{L^2(\Omega)}$  abbreviates  the weighted $L^2$ 
			scalar product with induced $\varrho$-weighted $L^2$ norm 
			$\Vert \bullet \Vert_\varrho:=\Vert \varrho^{1/2} \bullet\Vert_{L^2(\Omega)}$. 
			The discrete space 
			$P_{m}(\mathcal{T}):=\{p_m \in L^2(\Omega):\, p_m|_T\in P_m(T) \text{ is a polynomial of degree at most } m\text{ for any }T\in\mathcal{T}\}$ 
		 	consists of piecewise polynomials, the spaces $\textit{CR}^1_0(\mathcal{T})$ resp.\ $M(\mathcal{T})$ 
		 	will be defined in \cref{sec:CR} resp.~\ref{sec:Morley} below. 
			Given a function $v\in L^2(\omega)$, define the integral mean 
			$\intmean_\omega v\,\textup{d}x:= 1/|\omega|\,\int_\omega v\,\textup{d}x$. 		 	
		 	The $L^2$ projection $\Pi_0$ onto the piecewise constant functions $ P_0(\mathcal{T})$ reads 
			$(\Pi_0f)|_T:=\intmean_T f\,\textup{d}x$ for all 
			$f\in L^2(\Omega)$ and  $T\in\mathcal{T}$.  
			Let $\sigma:=\min\{1,\sigma_{\mathrm{reg}}\}$ denote the minimum of one and 
			the  index of elliptic regularity $\sigma_{\mathrm{reg}}>0$ for the source problem of 
			the $m$-Laplacian $(-1)^m\Delta^m$ in $H^m_0(\Omega)$:  
			Given any right-hand side $f\in L^2(\Omega)$, the weak
			solution $u\in V$ to $(-1)^m\Delta^m u=f $ satisfies 
			\begin{align}
					u\in H^{{m}+\sigma}(\Omega)
					\text{ and } \Vert u\Vert_{H^{m+\sigma}(\Omega)}\le C(\sigma)\Vert f\Vert_{L^2(\Omega)}. 
					\label{eq:def_sigma}
			\end{align}
			(This is well-established for $m=1$ 
			\cite{Necas1967,GilbargTrudinger1983,Dauge1988,Grisvard1992,Agmon2010} and $m=2$ in $2$D 
			\cite{BlumRannacher1980} with $\sigma_{\mathrm{reg}}>1/2$ and otherwise a hypothesis throughout this paper.) 
			The Sobolev space $H^{m+s}(\Omega)$ is defined for $0<s<1$ by complex interpolation of 
			$H^m(\Omega)$ and $H^{m+1}(\Omega)$, $m\in\mathbb{N}_0$. 
			Throughout this paper, $a \lesssim b$ abbreviates $a\le Cb$ with a generic constant $C$ 
			depending on $\sigma$ in \eqref{eq:def_sigma} and the shape-regularity 
			of $\mathcal{T}\in\mathbb{T}$ only; 
			$a \approx b$ stands for $a\lesssim  b\lesssim   a$. 

		\subsection{Interpolation}\label{sec:Interpolation}
			The operators $I$ and $J$ concern the (nonconforming) discrete  space ${V(\mathcal{T})}\subset P_{m}(\mathcal{T})$ and 
			$V:=H^m_0(\Omega)$ for an admissible triangulation $\mathcal{T}\in\mathbb{T}$. 
			An advantage of separate interest is that the analysis with $I$ and $J$  is performed 
			simultaneously for $m\ge 1$, while the examples in Subsection~\ref{sec:example} below concern $m=1,2$. 
		
		   Suppose that, for each admissible triangulation $\mathcal{T}\in\mathbb{T}$, there exists a linear interpolation operator $I$
		   onto  $V(\mathcal{T})$  that is defined on $V+V(\widehat{\mathcal{T}})$ for any refinement 
		   $\widehat{\mathcal{T}}\in\mathbb{T}(\mathcal{T})$ and that satisfies 
		   the following properties with universal positive constants $\kappa_{m}$ and $\kappa_{d}$; in all examples below $\kappa_{m}$ is known and the existence of $\kappa_{d}$ is clarified.
			
			\begin{enumerate}[label=(I\arabic*)]
			\item 		\label{item:I_kappa}
						Any $T\in \mathcal{T}$ and $v\in H^{m}(T)$ satisfy 
						$\Vert v- I v\Vert_{L^2(T)}\le \kappa_{m} h_T^{m} \vert v-Iv\vert_{H^m(T)}$.
						
			\item 		\label{item:I_Pi0} 
						The piecewise derivative $D^m_{\mathrm{pw}}$ 
						of any $v\in V+V(\widehat{\mathcal{T}})$ satisfies $D^m_{\mathrm{pw}}Iv=\Pi_0D^m_{\mathrm{pw}}v$.  
			\item 		\label{item:I_identity}
						The operator $I$ acts as identity in non-refined tetrahedra in that 
						$
							(1-I)\widehat{v}_{\mathrm{nc}}|_T=0 
							\text{ in }T\in \mathcal{T}\cap \widehat{\mathcal{T}}
									 \text{ for all }\widehat{v}_{\mathrm{nc}}\in V(\widehat{\mathcal{T}}). 
						$
			 			The interpolation operator $\widehat{I}$ associated with $V(\widehat{\mathcal{T}})$ satisfies 
						$I\circ \widehat{I}=I$ in $V+V(\widehat{\mathcal{T}})$. 
			\item \label{item:I_kappa_disc}
						Any $T\in \mathcal{T}$ and $\widehat{v}_{\mathrm{nc}}\in V(\widehat{\mathcal{T}})$ satisfy 
						$
								\Vert \widehat{v}_{\mathrm{nc}}- I \widehat{v}_{\mathrm{nc}}\Vert_{L^2(T)}
														\le {\kappa_d} h_T^{m} \vert \widehat{v}_{\mathrm{nc}}-I\widehat{v}_{\mathrm{nc}}\vert_{H^m(T)}.			
						$ 
			\end{enumerate}
			\begin{corollary}[properties of $I$]\label{cor:InterpolationOperator}
				\begin{enumerate}[label=(\alph*), ref=\alph*, leftmargin=1.6em ]	
					\item\label{item:cor_I_orthogonality}
						Given $\widehat{\mathcal{T}}\in\mathbb{T}(\mathcal{T})$,  
						any $v\in V+V(\widehat{\mathcal{T}})$ and $w_{\mathrm{nc}}\in V(\mathcal{T})$ satisfy 
						$\displaystyle a_{\mathrm{pw}}(v-Iv,w_{\mathrm{nc}})=0$ 
						and $\displaystyle \vvvert v-Iv\vvvert_{\mathrm{pw}}=\min_{v_{\mathrm{nc}}\in V(\mathcal{T})}\vvvert v-v_{\mathrm{nc}}\vvvert_{\mathrm{pw}}$.
					\item\label{item:cor_I_alpha} 
						Any $v\in H^{m+s}(\Omega)$ with $1/2<s\le 1$  satisfies 
						$
							\vvvert (1-I)v\vvvert_{\mathrm{pw}}\le (h_{\max}/\pi)^{s} \Vert v\Vert_{H^{m+s}(\Omega)}. 
						$
			 			
					\item\label{item:cor_I_a} Any $v,\, w\in V$ and $v_{\mathrm{nc}}\in V(\mathcal{T})$ satisfy
								$
						 			a_{\mathrm{pw}}(v, v_{\mathrm{nc}})=a_{\mathrm{pw}}(Iv,v_{\mathrm{nc}})		
								$
							and\\ 
							$\hfill\displaystyle
						 		a_{\mathrm{pw}}(v, (1-I)w)=a_{\mathrm{pw}}((1-I)v, (1-I)w)
						 			\le \min_{v_{\mathrm{nc}}\in V(\mathcal{T})} \vvvert v-v_{\mathrm{nc}}\vvvert_{\mathrm{pw}}
						 				\min_{w_{\mathrm{nc}}\in V(\mathcal{T})} \vvvert w-w_{\mathrm{nc}}\vvvert_{\mathrm{pw}}	. 		 
							$
					\item\label{item:cor_I_b} Any $w\in V$ and $v\in V+V(\mathcal{T})$ satisfy\\
							$\hfill\displaystyle
						 		b(v, (1-I)w)\le \Vert h_{\mathcal{T}}^m v\Vert_{L^2(\Omega)}\Vert h_{\mathcal{T}}^{-m} (1-I)w\Vert_{L^2(\Omega)}
						 			\le \kappa_m\Vert h_{\mathcal{T}}^m v\Vert_{L^2(\Omega)}\min_{w_{\mathrm{nc}}\in V(\mathcal{T})} 
						 			\vvvert w-w_{\mathrm{nc}}\vvvert_{\mathrm{pw}}. 		 
							$
				\end{enumerate}
			\end{corollary}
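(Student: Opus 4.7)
All four parts reduce to the $L^2$-orthogonality of $\Pi_0$ combined with the properties \ref{item:I_kappa}--\ref{item:I_kappa_disc}; they can be verified essentially independently.

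For part \emph{(a)}, I would use \ref{item:I_Pi0} to write $D^m_{\mathrm{pw}}(v-Iv)=(1-\Pi_0)D^m_{\mathrm{pw}}v$. Since $w_{\mathrm{nc}}\in V(\mathcal{T})\subset P_m(\mathcal{T})$ gives $D^m_{\mathrm{pw}}w_{\mathrm{nc}}\in P_0(\mathcal{T})$, the $L^2$-orthogonality of $\Pi_0$ proves
$a_{\mathrm{pw}}(v-Iv,w_{\mathrm{nc}})=((1-\Pi_0)D^m_{\mathrm{pw}}v,D^m_{\mathrm{pw}}w_{\mathrm{nc}})_{L^2(\Omega)}=0$.
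The best-approximation identity then follows by a one-line Pythagoras argument: for $v_{\mathrm{nc}}\in V(\mathcal{T})$, the choice $w_{\mathrm{nc}}=Iv-v_{\mathrm{nc}}\in V(\mathcal{T})$ yields
$\vvvert v-v_{\mathrm{nc}}\vvvert_{\mathrm{pw}}^2 = \vvvert v-Iv\vvvert_{\mathrm{pw}}^2+\vvvert Iv-v_{\mathrm{nc}}\vvvert_{\mathrm{pw}}^2\ge \vvvert v-Iv\vvvert_{\mathrm{pw}}^2$.

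For part \emph{(b)}, \ref{item:I_Pi0} gives $\vvvert(1-I)v\vvvert_{\mathrm{pw}}^2 = \sum_{T\in\mathcal{T}}\Vert(1-\Pi_0)D^m v\Vert_{L^2(T)}^2$. On each convex tetrahedron $T$, the Payne--Weinberger--Bebendorf Poincaré inequality with optimal constant $1/\pi$ provides, applied componentwise to $D^m v$, the bound $\Vert(1-\Pi_0)D^m v\Vert_{L^2(T)}\le (h_T/\pi)|v|_{H^{m+1}(T)}$; summation yields the estimate for $s=1$. For $1/2<s<1$, operator interpolation of $(1-\Pi_0)D^m\colon H^{m+s}(\Omega)\to L^2(\Omega)$ between the trivial bound at $s=0$ and the bound just derived at $s=1$ delivers the stated factor $(h_{\max}/\pi)^s$. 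This is the only step where a nontrivial fractional argument is needed.

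For part \emph{(c)}, the first identity is the orthogonality of (a) with $w_{\mathrm{nc}}=v_{\mathrm{nc}}$. For the second, I would add and subtract $a_{\mathrm{pw}}(Iv,(1-I)w)$ and observe that this term vanishes: \ref{item:I_Pi0} makes $D^m_{\mathrm{pw}}Iv=\Pi_0 D^m v$ piecewise constant, while $D^m_{\mathrm{pw}}(1-I)w=(1-\Pi_0)D^m w$ is $L^2$-orthogonal to $P_0(\mathcal{T})$. The remaining estimate is Cauchy--Schwarz on $a_{\mathrm{pw}}((1-I)v,(1-I)w)$ followed by the best-approximation identity from (a) applied to both $v$ and $w$. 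For part \emph{(d)}, the first inequality is Cauchy--Schwarz with the weight $h_{\mathcal{T}}^m$. For the second, \ref{item:I_kappa} applied to $w\in V\subset H^m(T)$ on each $T\in\mathcal{T}$ yields $\Vert(1-I)w\Vert_{L^2(T)}\le \kappa_m h_T^m|(1-I)w|_{H^m(T)}$, whose sum gives $\Vert h_{\mathcal{T}}^{-m}(1-I)w\Vert_{L^2(\Omega)}\le\kappa_m\vvvert(1-I)w\vvvert_{\mathrm{pw}}$; one last invocation of the best-approximation identity from (a) concludes.

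The only step demanding more than routine bookkeeping is the fractional case in (b), where the clean constant $(h_{\max}/\pi)^s$ requires an interpolation argument between the correctly scaled endpoints rather than a crude bound with an interpolation constant; everything else is a direct consequence of the $L^2$-orthogonality of $\Pi_0$ and the defining properties \ref{item:I_kappa}, \ref{item:I_Pi0} of $I$.
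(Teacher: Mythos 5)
Your proposal is correct and follows essentially the same route as the paper: part (a) from \ref{item:I_Pi0} and the $L^2$-orthogonality of $\Pi_0$ with Pythagoras, parts (c) and (d) from (a) together with Cauchy--Schwarz and \ref{item:I_kappa}. For part (b) the paper simply cites \cite[Cor.~2.2.a]{CP_Part1} for the ``piecewise Poincar\'e'' argument, and your Payne--Weinberger bound at $s=1$ combined with complex interpolation against the trivial $s=0$ endpoint is precisely the standard argument behind that citation, so no genuine difference in approach.
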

			\begin{proof}
					Since $D^m_{\mathrm{pw}}w_{\mathrm{nc}}\in P_0(\mathcal{T};\mathbb{R}^{3^m})$, 
					\ref{item:I_Pi0} implies (\ref{item:cor_I_orthogonality}). 
					In combination with a piecewise  Poincar\'e inequality, \ref{item:I_Pi0} implies (\ref{item:cor_I_alpha}) 
					(see \cite[Cor.~2.2.a]{CP_Part1} for details).
					The first claim in (\ref{item:cor_I_a}) follows from (\ref{item:cor_I_orthogonality}). 
					The combination of (\ref{item:cor_I_orthogonality}) with the Cauchy-Schwarz inequality proves (\ref{item:cor_I_a}). 
					The Cauchy-Schwarz inequality, the approximation property \ref{item:I_kappa}, and (\ref{item:cor_I_a}) conclude the proof of (\ref{item:cor_I_b}). 
			\end{proof}
		\subsection{Conforming companion}\label{sec:Companion}
			Given any tetrahedron $T\in\mathcal{T}$ in a triangulation $\mathcal{T}\in\mathbb{T}$, 
			let $\mathcal{V}(T)$ denote the set of its vertices ($0$-subsimplices) 
			and let $\mathcal{F}(T)$ denote the set of its faces ($2$-subsimplices). 
		 	A linear operator $J: {V(\mathcal{T})}\to V$ is called \emph{conforming 
		 	companion} if \ref{item:J_rightInverse}--\ref{item:J_T_That} hold 
		 	with universal constants $M_1,M_2,\,M_4$ (that exclusively depend on $\mathbb{T}$).
		 	 \begin{enumerate}[label=(J\arabic*)]
		 		\item\label{item:J_rightInverse} 
		 				$J$ is a right inverse to the interpolation $I$ in the sense that 
		 					$
		 						I\circ J =1 \text{ in  } {V(\mathcal{T})}. 
		 					$
		 		\item\label{item:J_approx} 
					$
						 \displaystyle\Vert h_{\mathcal{T}}^{-{m}} (1-J)v_{\mathrm{nc}}\Vert_{L^2(\Omega)}
						 +\vvvert (1-J)v_{\mathrm{nc}}\vvvert_{\mathrm{pw}}
									\le \Big(M_1 \sum_{T\in\mathcal{T}}|T|^{1/3}\sum_{F\in\mathcal{F}(T)}\Vert [D^m_{\mathrm{pw}} v_{\mathrm{nc}}]_F\times\nu_F\Vert^2_{L^2(F)}\Big)^{1/2}$
					$\\		\le {M_2} \min_{v\in V}\vvvert v_{\mathrm{nc}}-v\vvvert_{\mathrm{pw}}$ 
					 for any $v_{\mathrm{nc}}\in V(\mathcal{T})$.		
		    	\item\label{item:J_orthogonality}
						 $\displaystyle(1-J)(V(\mathcal{T}))\perp P_{m}(\mathcal{T})$  holds in $L^2(\Omega)$.		
		    	\item\label{item:J_T_That} 	
		 				$\displaystyle\vert v_{\mathrm{nc}} -Jv_{\mathrm{nc}}\vert_{H^m(K)}^2
		 				\le {M_4} \sum_{T\in \mathcal{T}(\Omega(K))}|T|^{1/3}
		 				\sum_{F\in\mathcal{F}(T)}\Vert  [D^{m}_{\mathrm{pw}}  v_{\mathrm{nc}}]_F\times \nu_F\Vert_{L^2(F)}^2$
		 				holds  for any $v_{\mathrm{nc}}\in V(\mathcal{T})$ and $K\in\mathcal{T}$ with the set 
		 				$\mathcal{T}(\Omega(K)):=\{T\in\mathcal{T}: \mathrm{dist}(T,K)=0\}$ of adjacent tetrahedra.  
		 	\end{enumerate}
		 	The properties \ref{item:J_rightInverse}--\ref{item:J_T_That} 
		 	\cite{CGS15,Gal15,CP_Part1}  are stated for convenient quotation throughout this paper. 
	 	The localized version \ref{item:J_T_That}  applies at the very end (in \cref{thm:A34EVP}) and implies parts of \ref{item:J_approx}. The second inequality in \ref{item:J_approx} is the efficiency of a posteriori error estimators. 	 	
		 	\begin{remark}[on  \ref{item:J_T_That}] \label{rem:J2_wanted}
		 			For any refinement $\widehat{\mathcal{T}}\in\mathbb{T}(\mathcal{T})$ of a triangulation 
		 			$\mathcal{T}\in\mathbb{T}$, let 
		 			$\mathcal{R}_1:=\{K\in\mathcal{T}:\,\exists\, T\in \mathcal{T}\setminus\widehat{\mathcal{T}} 
		 			\text{ with }\allowbreak \textup{dist}(K,T)=~0\}\subset~\mathcal{T}$ denote the set of coarse but not 
		 			fine tetrahedra plus one layer of coarse tetrahedra around. Then \ref{item:J_T_That} and a finite overlap argument imply 
		 			the existence of $M_5>0$ such that any $v_{\mathrm{nc}}\in V(\mathcal{T})$ satisfies 
		 			\begin{align*}
		 				\Vert D^{m}_{\mathrm{pw}}(v_{\mathrm{nc}} -Jv_{\mathrm{nc}})\Vert_{L^2(\mathcal{T}\setminus\widehat{\mathcal{T}})}^2
		 				\le M_5
		 				\sum_{T\in \mathcal{R}_1}|T|^{1/3}\sum_{F\in\mathcal{F}(T)}\Vert  [D^{m}_{\mathrm{pw}}  v_{\mathrm{nc}}]_F\times \nu_F\Vert_{L^2(F)}^2.
					\end{align*} 	
		 			The superset $\mathcal{R}_1$ of $\mathcal{T}\setminus\widehat{\mathcal{T}}$ serves as a simple example and 
		 			could indeed be replaced by $\mathcal{T}\setminus\widehat{\mathcal{T}}$ provided $J$ may depend on $\widehat{\mathcal{T}}$;  
		 			cf.  \cite[\S6]{CP18} for details in the two model problems below.\phantom{x}\hfill$\Box$
		 	\end{remark}
		 	
		 	\begin{corollary}[properties of $J$]\label{cor:ConformingCompanion} Any $w\in V$ and  $v_{\mathrm{nc}}\in V(\mathcal{T})$ satisfy
		 		\begin{enumerate}[label=(\alph*), ref=\alph* ]
		 			\item\label{item:cor_J_L2_vh}
		 			$\displaystyle\Vert v_{\mathrm{nc}}-Jv_{\mathrm{nc}}\Vert_{L^2(\Omega)}=\Vert (1-I)Jv_{\mathrm{nc}}\Vert_{L^2(\Omega)}
		 			\le \kappa_{m} \vvvert h_{\mathcal{T}}^{m}(v_{\mathrm{nc}}-Jv_{\mathrm{nc}})\vvvert_{\mathrm{pw}}$\\
		 			$\displaystyle\phantom{xxxxxxxxxxxxxi}
		 			\le h_{\max}^{m}\kappa_{m}{M_2}\min_{v\in V}\vvvert v_{\mathrm{nc}}-v\vvvert_{\mathrm{pw}};$
		 			\item\label{item:cor_J_b_vh}
		 			$\displaystyle b(w, v_{\mathrm{nc}}-Jv_{\mathrm{nc}})=b(w-Iw,v_{\mathrm{nc}}-Jv_{\mathrm{nc}})
		 			\le \Vert w-Iw\Vert_{L^2(\Omega)}\Vert v_{\mathrm{nc}}-Jv_{\mathrm{nc}}\Vert_{L^2(\Omega)}$\\
		 			$\displaystyle\phantom{xxxxxxxxxxxxxi}\le h_{\max}^{2{m}} \kappa_{m}^2{M_2} 
		 			\min_{w_{\mathrm{nc}}\in V(\mathcal{T})}\vvvert w-w_{\mathrm{nc}}\vvvert_{\mathrm{pw}}
														 \min_{v\in V}\vvvert v_{\mathrm{nc}}-v\vvvert_{\mathrm{pw}};$
		 			\item\label{item:cor_J_a_vh}
		 			$\displaystyle a_{\mathrm{pw}}(w, v_{\mathrm{nc}}-Jv_{\mathrm{nc}})
		 			=a_{\mathrm{pw}}(w-Iw,v_{\mathrm{nc}}-Jv_{\mathrm{nc}})\le\vvvert w-Iw\vvvert_{\mathrm{pw}}
		 			\vvvert v_{\mathrm{nc}}-Jv_{\mathrm{nc}}\vvvert_{\mathrm{pw}}$\\
		 			$\displaystyle\phantom{xxxxxxxxxxxxxxxi}	\le {M_2} \min_{w_{\mathrm{nc}}\in V(\mathcal{T})}
		 			\vvvert w-w_{\mathrm{nc}}\vvvert_{\mathrm{pw}}\min_{v\in V}\vvvert v-v_{\mathrm{nc}}\vvvert_{\mathrm{pw}}.$
		 		\end{enumerate}
		 	\end{corollary}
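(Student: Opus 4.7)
The plan is to derive (a), (b), and (c) in that order. All three rely on a short combination of the right-inverse property \ref{item:J_rightInverse}, the orthogonality \ref{item:J_orthogonality}, the elementwise approximation bound \ref{item:I_kappa}, the orthogonality and best-approximation statement from item (a) of \cref{cor:InterpolationOperator}, and the efficiency bound in \ref{item:J_approx}. No new machinery is needed; the only subtlety is the cancellation step in (c).

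For (a), the identity $IJv_{\mathrm{nc}}=v_{\mathrm{nc}}$ from \ref{item:J_rightInverse} immediately yields $v_{\mathrm{nc}}-Jv_{\mathrm{nc}}=-(1-I)Jv_{\mathrm{nc}}$, proving the first equality. I would then apply \ref{item:I_kappa} to $Jv_{\mathrm{nc}}\in V$ elementwise and rewrite $|Jv_{\mathrm{nc}}-IJv_{\mathrm{nc}}|_{H^m(T)}=|Jv_{\mathrm{nc}}-v_{\mathrm{nc}}|_{H^m(T)}$, so that summing over $T\in\mathcal{T}$ gives the bound by $\kappa_m\vvvert h_{\mathcal{T}}^m(v_{\mathrm{nc}}-Jv_{\mathrm{nc}})\vvvert_{\mathrm{pw}}$. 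Replacing $h_{\mathcal{T}}$ by the global constant $h_{\max}$ and invoking the second inequality of \ref{item:J_approx} closes (a).

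For (b), I would first observe that $Iw\in V(\mathcal{T})\subset P_{m}(\mathcal{T})$, so \ref{item:J_orthogonality} gives $b(Iw,v_{\mathrm{nc}}-Jv_{\mathrm{nc}})=0$ and hence $b(w,v_{\mathrm{nc}}-Jv_{\mathrm{nc}})=b(w-Iw,v_{\mathrm{nc}}-Jv_{\mathrm{nc}})$, which is the displayed equality. The Cauchy--Schwarz inequality provides the middle estimate. For the final bound, \ref{item:I_kappa} together with item (a) of \cref{cor:InterpolationOperator} controls $\Vert w-Iw\Vert_{L^2(\Omega)}$ by $\kappa_m h_{\max}^m\min_{w_{\mathrm{nc}}\in V(\mathcal{T})}\vvvert w-w_{\mathrm{nc}}\vvvert_{\mathrm{pw}}$, and (a) already proven handles the factor $\Vert v_{\mathrm{nc}}-Jv_{\mathrm{nc}}\Vert_{L^2(\Omega)}$. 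For (c), the heart of the argument is the orthogonality $a_{\mathrm{pw}}(Iw,v_{\mathrm{nc}}-Jv_{\mathrm{nc}})=0$. I would obtain it by applying item (a) of \cref{cor:InterpolationOperator} with the input $v:=Jv_{\mathrm{nc}}\in V$ and test function $w_{\mathrm{nc}}:=Iw\in V(\mathcal{T})$: this yields $a_{\mathrm{pw}}(Jv_{\mathrm{nc}}-IJv_{\mathrm{nc}},Iw)=0$, which becomes $a_{\mathrm{pw}}(v_{\mathrm{nc}}-Jv_{\mathrm{nc}},Iw)=0$ after $IJ=1$. Consequently $a_{\mathrm{pw}}(w,v_{\mathrm{nc}}-Jv_{\mathrm{nc}})=a_{\mathrm{pw}}(w-Iw,v_{\mathrm{nc}}-Jv_{\mathrm{nc}})$, and Cauchy--Schwarz, item (a) of \cref{cor:InterpolationOperator}, and the second inequality of \ref{item:J_approx} finish the proof.

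The only obstacle worth flagging is recognizing the cancellation in (c): one must notice that the piecewise object $Iw$ may be fed into the generic orthogonality of \cref{cor:InterpolationOperator}(a) in its \emph{test} slot, while its \emph{input} slot accepts the $V$-function $Jv_{\mathrm{nc}}$; the combination of this choice with $IJ=1$ converts a generic orthogonality of piecewise $H^m$ functions into exactly the cancellation required. Everything else is a routine assembly of Cauchy--Schwarz with the approximation and efficiency estimates already at hand.
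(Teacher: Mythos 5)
Your proposal is correct and follows exactly the route the paper takes in its (very terse) proof: (a) from \ref{item:J_rightInverse}, \ref{item:I_kappa}, and \ref{item:J_approx}; (b) from \ref{item:J_orthogonality}, Cauchy--Schwarz, \ref{item:I_kappa}, and (a); and (c) from \cref{cor:InterpolationOperator}.\ref{item:cor_I_a} combined with \ref{item:J_rightInverse}--\ref{item:J_approx}, including the cancellation $a_{\mathrm{pw}}((1-I)Jv_{\mathrm{nc}},Iw)=0$ that you flag. Your write-up merely makes explicit the details the paper leaves to the reader; no discrepancy or gap.
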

		 	\begin{proof}
		 		The combination of \ref{item:J_rightInverse}, \ref{item:I_kappa}, and \ref{item:J_approx}  proves (\ref{item:cor_J_L2_vh}).
		 		The claim (\ref{item:cor_J_b_vh}) follows from \ref{item:J_orthogonality}, the Cauchy-Schwarz inequality,  \ref{item:I_kappa}, and 
		 		(\ref{item:cor_J_L2_vh}). 
		 		\cref{cor:InterpolationOperator}.\ref{item:cor_I_a}  and 
		 		\ref{item:J_rightInverse}--\ref{item:J_approx}  lead to (\ref{item:cor_J_a_vh}). 
		 	\end{proof}	 		
			
		 \subsection{Examples}\label{sec:example}
				Two examples for $V(\mathcal{T})\subset P_m(\mathcal{T})$ are analysed simultaneously in this paper for $m=1,2$.  
				 It is appealing to follow 	our methodology for $m\ge 3$ \cite{WangXu2013} in future research. 
			
			\subsubsection{Crouzeix-Raviart finite elements for the Laplacian ($\boldsymbol{m=1}$)} \label{sec:CR}
			 	Given the shape-regular triangulation $\mathcal{T}\in\mathbb{T}$,	
			 	let $\mathcal{F}$ (resp. $\mathcal{F}(\Omega)$ or $\mathcal{F}(\partial\Omega)$) denote the set of all 
				(resp. interior or boundary) faces.
			 	Throughout this paper,  the model problem with ${m}=1$ approximates the Dirichlet eigenvectors $u\in H^1_0(\Omega)$ 
			 	of the  Laplacian $ -\Delta u=\lambda u$ 	
			 	 in the Crouzeix-Raviart finite element space \cite{CR73}
				\begin{align*}
						V(\mathcal{T}):=\textit{CR}^1_0(\mathcal{T})
							:=\{v\in P_1(\mathcal{T}):\ &v \text{ is continuous at }\textup{mid}(F)\text{ for all }F\in\mathcal{F}(\Omega)
							\text{ and }\\&v(\textup{mid}(F))=0\text{ for all }F\in\mathcal{F}(\partial\Omega) \}.	
				\end{align*} 
				Given the face-oriented basis functions $\psi_{F}\in \textit{CR}^1(\mathcal{T})$ with $\psi_F(\textup{mid}(E))=\delta_{EF}$ for all faces 
				$E,F\in\mathcal{F}$ ($\delta_{EF}$ is Kronecker's delta), the standard interpolation operator reads
					$$
						I_{\text{CR}}(v):=\sum_{F\in\mathcal{F}(\Omega)}\bigg(\intmean_F v\,\textup{d}\sigma\bigg)\psi_F\quad\text{ for any }
										v\in H^1_0(\Omega)+\textit{CR}^1_0(\widehat{\mathcal{T}}). $$
			 	The interpolation operator $I_{\text{CR}}$ satisfies \ref{item:I_kappa}--\ref{item:I_kappa_disc} 
			 	with $\kappa_1:=\sqrt{{1}/{\pi^2}+{1}/{120}}$, 
			 	see \cite[Sec. 4.2--4.4]{CP18} and the references therein. 
			 	The constant $\kappa_1$ is provided in \cite{CGal14,CGed14,CZZ18}. 

			 	The design of the conforming companion $J:\textit{CR}^1_0(\mathcal{T})\to S^{5}_0(\mathcal{T}):=P_{5}(\mathcal{T})\cap C_0(\Omega)$ 
			 	with \ref{item:J_rightInverse}--\ref{item:J_T_That}  is a straightforward generalization of \cite[Prop.~2.3]{CGS15} to $3$D. 
			 	The arguments in  \cite[Prop.~2.3]{CGS15} can be localized \cite[Thm.~5.1]{CEHL12} and 
			 	lead with \cite[Thm.~3.2]{CBJ01},\cite[Thm.~4.9]{CGS13} to \ref{item:J_approx} and 
			 	\ref{item:J_T_That}.  
			 		 		
			\subsubsection{Morley finite elements for the bi-Laplacian ($\boldsymbol{m=2}$)}\label{sec:Morley}
				Given the shape-regular triangulation $\mathcal{T}\in\mathbb{T}$,  
				let $\mathcal{E}$ (resp. $\mathcal{E}(\Omega)$ or $\mathcal{E}(\partial\Omega)$) denote the set of all 
				(resp. interior or boundary) edges.  Let $\mathcal{F}(E):=\{F\in\mathcal{F}:\, E\subset \overline{F}\}$ 
				denote the set of all faces containing the edge $E\in\mathcal{E}$. 
				For any face $F\in \mathcal{F}$, let $\nu_F$ denote the unit normal with fixed orientation and $[\bullet]_F$ the jump across $F$. 
			 	The model problem with ${m}=2$  approximates the Dirichlet eigenvectors $u\in H^2_0(\Omega)$ of the bi-Laplacian 
			 	$ \Delta^2 u=\lambda u$ 	
				in the  discrete Morley finite element space \cite{Mor68,MX06} 
			 	\begin{align*}
					V(\mathcal{T}):={M}(\mathcal{T}):=\Big\{v\in P_2(\mathcal{T}):\ &
														\intmean_E [v]_F\,\textup{d}s=0\text{ for all }E\in\mathcal{E}\text{ and }F\in\mathcal{F}(E) 
														\text{, }\\&\text{and }
														\intmean_F [\nabla v]_F\cdot\nu_F\,\textup{d}\sigma=0 \text{ for all }F\in\mathcal{F}
														\Big\}.		
				\end{align*} 
				Given the nodal basis functions $\Phi_E,\Phi_F$ for any $E\in\mathcal{E}$ and  $F\in \mathcal{F}$ 
				(see \cite[Eq.~(2.1)--(2.2)]{CP_Part1} for details),  the standard interpolation operator 
				\cite{CGal14, Gal15, CP18, CP_Part1} reads 
				$$I_M(v):=\sum_{E\in\mathcal{E}(\Omega)}\bigg(\intmean_E v\,\textup{d}s\bigg)\phi_E
					+\sum_{F\in\mathcal{F}(\Omega)}\bigg(\intmean_F\nabla v\cdot\nu_F\,\textup{d}\sigma\bigg) \phi_F
								\quad\text{ for any } 
								v\in H^2_0(\Omega)+M(\widehat{\mathcal{T}}).$$
			 	The operator $I_M$ satisfies \ref{item:I_kappa}--\ref{item:I_kappa_disc} with 
			 	$\kappa_2:={\kappa_{1}}/{\pi}+\sqrt{({3\kappa_{1}^2+2\kappa_{1} })/80}$ as discussed in \cite{CP18,CP_Part1};  
			 	$\kappa_2$ is provided in \cite{CGal14,CP_Part1}. 
			 	
			 	There exists a conforming companion $J:M(\mathcal{T})\to V$ 
			 	based on the Hsieh-Clough-Tocher FEM \cite[Chap.~6]{Ciarlet78}  with \ref{item:J_rightInverse}--\ref{item:J_T_That}  
			 	in \cite{Gal15,VZ19,CP18} in $2$D and 
				on the Worsey-Farin FEM  \cite{WF87} with \ref{item:J_rightInverse}--\ref{item:J_orthogonality}  
				in \cite{CP_Part1} in $3$D.
				Since the arguments in the proof of \ref{item:J_approx} in \cite[Thm.~3.1.b]{CP_Part1} are  local, 
				\ref{item:J_T_That} follows in $3$D as well. 
\section{Medius analysis}\label{sec:apriori}
	This section shows that \ref{item:I_kappa}--\ref{item:I_Pi0} and \ref{item:J_rightInverse}--\ref{item:J_orthogonality} 
	lead to best-approximation and error estimates in weaker Sobolev norms. 	

\subsection{Main result and layout of the proof}\label{sec:apriori_thm} 
					Throughout this paper, $k\in\mathbb{N}$ is the number of a \emph{simple} exact eigenvalue $\lambda\equiv\lambda_k$.  
					The aim of this section is the proof of \cref{thm:BoundInterpolationError} with 
					$\Vert\bullet\Vert_{\delta}$ defined in \eqref{eq:def_delta} below. 
			\begin{theorem}[best-approximation]\label{thm:BoundInterpolationError}
					Let $(\lambda,u)\in \mathbb{R}^+\times V$  denote  the ${k}$-th continuous eigenpair 
					of \eqref{eq:contEVP} with a \emph{simple} eigenvalue $\lambda\equiv\lambda_k$ and  $\Vert u\Vert_{L^2(\Omega)}=1$. 
					There exist $\varepsilon_5>0$ and $C_0>0$ such that, for all  
					$\mathcal{T}\in \mathbb{T}(\varepsilon_5):=\{\mathcal{T}\in\mathbb{T}:\,h_{\max}\le \varepsilon_5\}$,   
					there exists a discrete eigenpair   $(\lambda_h,\boldsymbol{u_h})\in \mathbb{R}^+\times \boldsymbol{V_h}$ of number ${k}$ of 
					\eqref{eq:dis_EVP_alt}  
					with $\lambda_h\equiv\lambda_h({k})$, $\boldsymbol{u_h}=(u_{\mathrm{pw}},u_{\mathrm{nc}})$,  
					$\Vert u_{\mathrm{nc}}\Vert_{L^2(\Omega)}=1$,  
					and $b(u,u_{\mathrm{nc}})> 0$ such that 
					\begin{enumerate}[label=(\alph*),ref=\alph*,wide]
						\item \label{item:lambdahSimple}
							$\lambda_h(k)$ is a simple algebraic eigenvalue of \eqref{eq:disEVP_intermediate}
							with ${\lambda_k}/{2}\le \lambda_h(k)$,
						\item \label{item:lambdahGLB}
							$\lambda_h(j)\le\lambda_j$ for all $j=1,\dots,k+1$, 
						\item \label{item:BoundInterpolationError}
					 		$\displaystyle |\lambda-\lambda_h|+\vvvert u-u_{\mathrm{nc}}\vvvert_{\mathrm{pw}}^2
					  		+h_{\max}^{-2\sigma}\Vert u-u_{\mathrm{nc}}\Vert_{L^2(\Omega)}^2+\Vert u_{\mathrm{nc}}\Vert_{\delta}^2
					  		\le C_0 \vvvert u- I u\vvvert_{\mathrm{pw}}^2$.
					\end{enumerate}
			\end{theorem}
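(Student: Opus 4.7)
The plan is to reduce the mixed eigenvalue problem \eqref{eq:dis_EVP_alt} to an equivalent nonconforming rational eigenvalue problem and then compare with the continuous eigenpair via a medius argument. Testing \eqref{eq:dis_EVP_alt} with $\boldsymbol{v_h}=(p_m,0)$ for arbitrary $p_m\in P_m(\mathcal{T})$ gives the element-wise local relation $(1-\lambda_h\kappa_m^2 h_T^{2m})\,u_{\mathrm{pw}}=u_{\mathrm{nc}}$ on each $T\in\mathcal{T}$, hence $u_{\mathrm{pw}}=\rho_h u_{\mathrm{nc}}$ with the piecewise constant weight $\rho_h|_T:=(1-\lambda_h\kappa_m^2 h_T^{2m})^{-1}$. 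This weight is well defined and close to one for $\lambda_h\kappa_m^2 h_{\max}^{2m}\ll 1$ and converts \eqref{eq:dis_EVP_alt} into the nonconforming intermediate problem \eqref{eq:disEVP_intermediate}, $a_{\mathrm{pw}}(u_{\mathrm{nc}},v_{\mathrm{nc}})=\lambda_h\,b(\rho_h u_{\mathrm{nc}},v_{\mathrm{nc}})$ for all $v_{\mathrm{nc}}\in V(\mathcal{T})$, in which the stabilisation is encoded as the rational perturbation $\rho_h-1$ of the $L^2$-mass.

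For (a) and (b), the conforming companion $J$ provides standard a priori convergence of the discrete spectrum to the continuous one (as in \cite{CP_Part1}): the simplicity and spectral gap of $\lambda_k$ carry over, for $\varepsilon_5$ sufficiently small, to give simplicity of $\lambda_h({k})$ in \eqref{eq:disEVP_intermediate} together with the crude separation $\lambda_h({k})\ge\lambda_k/2$. The GLB property (b) then follows directly from \eqref{eq:GLB_cond} once $\varepsilon_5$ is chosen so that $\lambda_{k+1}\kappa_m^2\varepsilon_5^{2m}\le 1$. The discrete eigenvector is normalised with $\Vert u_{\mathrm{nc}}\Vert_{L^2(\Omega)}=1$ and its sign is fixed by $b(u,u_{\mathrm{nc}})>0$, which is possible since $u_{\mathrm{nc}}$ is not $L^2$-orthogonal to $u$ for small $h_{\max}$ by the spectral convergence.

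The medius analysis for (c) starts from the Pythagoras decomposition of \cref{cor:InterpolationOperator}.\ref{item:cor_I_orthogonality}, $\vvvert u-u_{\mathrm{nc}}\vvvert_{\mathrm{pw}}^2=\vvvert u-Iu\vvvert_{\mathrm{pw}}^2+\vvvert Iu-u_{\mathrm{nc}}\vvvert_{\mathrm{pw}}^2$, which reduces the proof to a bound on $e:=Iu-u_{\mathrm{nc}}\in V(\mathcal{T})$. Testing the intermediate problem with $e$, the continuous problem \eqref{eq:contEVP} with $Je\in V$, and subtracting produces a consistency error controlled through \cref{cor:ConformingCompanion} and a perturbation term $\lambda_h\,b((\rho_h-1)u_{\mathrm{nc}},e)$. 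The latter accounts precisely for the weighted norm $\Vert u_{\mathrm{nc}}\Vert_{\delta}^2$ on the left-hand side of (c) and is of order $h_{\max}^{2m}$. The factor $h_{\max}^{-2\sigma}$ in front of the $L^2$ error is supplied by an Aubin--Nitsche duality: the dual source problem inherits the $H^{m+\sigma}(\Omega)$ regularity from \eqref{eq:def_sigma}, so that \cref{cor:InterpolationOperator}.\ref{item:cor_I_alpha} and \cref{cor:ConformingCompanion}.\ref{item:cor_J_b_vh} yield $\Vert u-u_{\mathrm{nc}}\Vert_{L^2(\Omega)}\lesssim h_{\max}^\sigma\vvvert u-u_{\mathrm{nc}}\vvvert_{\mathrm{pw}}$. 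The eigenvalue error is recovered from a Rayleigh-quotient-type identity that expresses $\lambda-\lambda_h$ in terms of $\vvvert u-u_{\mathrm{nc}}\vvvert_{\mathrm{pw}}^2$, $\lambda\Vert u-u_{\mathrm{nc}}\Vert_{L^2(\Omega)}^2$ and the stabilisation contribution $\Vert u_{\mathrm{nc}}\Vert_{\delta}^2$, with higher-order remainders absorbed by the right-hand side.

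The \emph{main obstacle} is that the stabilisation with negative power $h_{\mathcal{T}}^{-2m}$ destroys standard Galerkin orthogonality and forces an element-wise track of the rational weight $\rho_h-1$; the absorption argument only closes once $\lambda_h\kappa_m^2 h_{\max}^{2m}$ is strictly below one uniformly in $\mathcal{T}\in\mathbb{T}(\varepsilon_5)$, and once the medius-type efficiency embedded in \ref{item:J_approx} can dominate jump contributions of $D^m_{\mathrm{pw}}u_{\mathrm{nc}}$ by $\vvvert u-u_{\mathrm{nc}}\vvvert_{\mathrm{pw}}$ in the spirit of \cite{Gudi10}. These two constraints together fix the admissible range of $\varepsilon_5$ and the resulting constant $C_0$.
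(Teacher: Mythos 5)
Your reduction to the rational/intermediate problem, the treatment of (a)--(b), and the general philosophy (medius analysis plus duality) match the paper, but the core of (c) has two genuine gaps. First, your argument is circular at the decisive point: when you test the intermediate problem with $e=Iu-u_{\mathrm{nc}}$ and the continuous problem with $Je$, the mismatch of the right-hand sides produces terms of the form $b(\lambda u-\lambda_h(1+\delta)u_{\mathrm{nc}},\cdot)$, whose control requires $\Vert u-u_{\mathrm{nc}}\Vert_{L^2(\Omega)}$ and $|\lambda-\lambda_h|$; and your Aubin--Nitsche claim $\Vert u-u_{\mathrm{nc}}\Vert_{L^2(\Omega)}\lesssim h_{\max}^{\sigma}\vvvert u-u_{\mathrm{nc}}\vvvert_{\mathrm{pw}}$ cannot be run directly either, because $u_{\mathrm{nc}}$ is \emph{not} the discrete solution of a source problem, so the duality identity again generates exactly these eigenvalue-specific consistency terms. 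The paper breaks this circle by introducing the auxiliary discrete source problem \eqref{eq:def_zCR} for $z_{\mathrm{nc}}$ (where duality is legitimate, giving \eqref{eq:u_zCR_energy}--\eqref{eq:u_Iu_Energy}) and then transferring the $L^2$ bound from $z_{\mathrm{nc}}$ to $u_{\mathrm{nc}}$ through a Fourier expansion in the $(\bullet,\bullet)_{1+\delta}$-orthonormal eigenbasis of \eqref{eq:disEVP_intermediate} together with the spectral gap \ref{item:H2}, yielding \eqref{eq:u_zCR_L2}. Nothing in your proposal plays the role of $z_{\mathrm{nc}}$ or of the gap constant $M_6$, and without it the absorption you invoke does not close.

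Second, your statement that the stabilisation term $\lambda_h b((\rho_h-1)u_{\mathrm{nc}},e)$ ``accounts for $\Vert u_{\mathrm{nc}}\Vert_{\delta}^2$ and is of order $h_{\max}^{2m}$'' does not prove what the theorem demands: the left-hand side must be bounded by $C_0\vvvert u-Iu\vvvert_{\mathrm{pw}}^2$ with $C_0$ independent of $\mathcal{T}$, and an $O(h_{\max}^{2m})$ bound is not comparable to $\vvvert u-Iu\vvvert_{\mathrm{pw}}^2$ on general (adaptively refined) meshes. The paper needs the bubble-function efficiency estimate \eqref{eq:ControlOfDeltaUh}, i.e.\ $\Vert h_{\mathcal{T}}^{m}u_{\mathrm{nc}}\Vert_{L^2(\Omega)}\lesssim h_{\max}^{m}\Vert u-u_{\mathrm{nc}}\Vert_{L^2(\Omega)}+\lambda^{-1}\vvvert u-Iu\vvvert_{\mathrm{pw}}$, precisely to convert $\Vert u_{\mathrm{nc}}\Vert_{\delta}$ and the $\Vert\delta\lambda u\Vert_{L^2(\Omega)}$ terms arising in \eqref{eq:u_zCR_energy}--\eqref{eq:u_Iu_Energy} into best-approximation quantities before the absorption steps \eqref{eq:delta_uh_2}--\eqref{eq:L2errorEnergyError}; your appeal to the jump efficiency in \ref{item:J_approx} addresses only the face terms, not this volume term. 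You would need to add both ingredients (the $z_{\mathrm{nc}}$/gap argument and the volume efficiency) to make the proposed route rigorous, at which point it essentially reproduces the paper's proof.
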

					Some comments on related results and an outline of the proof of \cref{thm:BoundInterpolationError} are in order before 
					Subsections~\ref{sec:IntermediateEVP}--\ref{sec:BestApproxFinish} provide details. 
					\begin{remark}[known convergence results]\label{rem:convergence_result}
						The analysis in \cite{CP_Part1} (\S~2.3.3 for $m=1$ and Thm.~1.2 for $m=2$) 
						guarantees the convergence of the eigenvalues $\lambda_h$ to $\lambda$
						and the component $u_{\mathrm{pw}}\in P_m(\mathcal{T})$ to $u\in V$. 
						The assumption that $\lambda=\lambda_{k}$ is a simple eigenvalue of \eqref{eq:contEVP} and the convergence 
					  	$\lambda_h({k})\equiv\lambda_h\to \lambda$ as $h_{\max}\to 0$ lead to the existence of $\varepsilon_0>0$ such that 
					  	the number $M:=\textup{dim}(P_m(\mathcal{T}))$ of discrete eigenvalues of \eqref{eq:dis_EVP_alt} is larger than $k+1$ and  
					  	$\lambda_h({k}-1)<\lambda_h({k})\equiv\lambda_h<\lambda_h({k}+1)$ as well as  $\lambda_k/{2}\le \lambda_h(k)$  
					    for all $\mathcal{T}\in\mathbb{T}(\varepsilon_0)$. 
					  	Then the eigenfunction  $\boldsymbol{u_h}=(u_{\mathrm{pw}},u_{\mathrm{nc}})\in \boldsymbol{V_h}\setminus\{0\}$  
					    is unique.
					\end{remark} 
						The convergence analysis in \cite{CP_Part1} displays convergence of the eigenvector $u_{\mathrm{pw}}\in P_m(\mathcal{T})$ 
						but not for the nonconforming component $u_{\mathrm{nc}}\in V(\mathcal{T})$. 
					    This section focusses on the convergence analysis for $u_{\mathrm{nc}}\in V(\mathcal{T})$.
						Recall that $k\in\mathbb{N}$ is fixed and $(\lambda, u)$  denotes the $k$-th eigenpair of 
						\eqref{eq:contEVP} with a simple eigenvalue 
						$\lambda\equiv\lambda_k>0$ and $\Vert u\Vert_{L^2(\Omega)}=1$.  
						Set $\varepsilon_1:=\min\{\varepsilon_0, (2\lambda_{k+1}\kappa_m^2)^{-1/(2m)}\}$ and suppose 
						$\mathcal{T}\in\mathbb{T}(\varepsilon_1)$.
						Let $(\lambda_h,\boldsymbol{u_h})$ denote the $k$-th discrete eigenpair  in \eqref{eq:dis_EVP_alt}
						with $\lambda_h\equiv\lambda_h(k)>0$, $\boldsymbol{u_h}=(u_{\mathrm{pw}},u_{\mathrm{nc}})\in\boldsymbol{V_h}$, 
						$\Vert u_{\mathrm{nc}}\Vert_{L^2(\Omega)}=1$, and $b(u,u_{\mathrm{nc}})\ge 0$. 
					\begin{proofof}\textit{\cref{thm:BoundInterpolationError}.\ref{item:lambdahSimple}.}
						This follows from \cref{rem:convergence_result} for $\varepsilon_1:=\min\{\varepsilon_0, (2\lambda_{k+1}\kappa_m^2)^{-1/(2m)}\}$.$\ $
					\end{proofof}
					\begin{proofof}\textit{\cref{thm:BoundInterpolationError}.\ref{item:lambdahGLB}.}
						The choice $\varepsilon_1:=\min\{\varepsilon_0, (2\lambda_{k+1}\kappa_m^2)^{-1/(2m)}\}$ implies for all $j=1,\dots,k$  that 
						$\lambda_j\kappa_{m}^2h^{2{m}}_{\max}\le\lambda_{k+1}\kappa_{m}^2\varepsilon_1^{2{m}}=1/2$.  
						 Hence \eqref{eq:GLB_cond} proves \cref{thm:BoundInterpolationError}.\ref{item:lambdahGLB}. 
					\end{proofof}
					\begin{remark}[weight $\delta$]\label{rem:delta}				
						The piecewise constant weight $\delta\in P_0(\mathcal{T})$ in the weighted $L^2$ norm 
						$\Vert\bullet\Vert_\delta:=\Vert \sqrt{\delta}\bullet\Vert_{L^2(\Omega)}$ 
						on the left-hand side of \cref{thm:BoundInterpolationError}.\ref{item:BoundInterpolationError} reads 
						\begin{align}
							\delta:=\frac{1}{1-\lambda_h\kappa_{m}^2h_{\mathcal{T}}^{2{m}}}-1
								=\frac{\lambda_h\kappa_{m}^2h_{\mathcal{T}}^{2{m}}}{1-\lambda_h\kappa_{m}^2h_{\mathcal{T}}^{2{m}}}
								=\lambda_h\kappa_{m}^2h_{\mathcal{T}}^{2{m}}(1+\delta)\in P_0(\mathcal{T}). \label{eq:def_delta}
						\end{align}
						Notice that $h_{\max}\le\varepsilon_1$ implies $\delta\le\delta_{\max}:=(1-\lambda_h \kappa_{m}^2h_{\max}^{2{m}})^{-1}-1\le1$. 
						The constant $C_\delta:=2\lambda\kappa_m^2$ satisfies $\delta\le C_{\delta}h_{\mathcal{T}}^{2{m}}\le C_{\delta}h_{\max}^{2{m}}$  
						(because $\lambda_h\le\lambda$  from \cref{thm:BoundInterpolationError}.\ref{item:lambdahGLB})
						and $\delta$ converges to zero as the maximal mesh-size $h_{\max}\to 0$ approaches zero. 
					\end{remark}
					\begin{remark}[related work]
						This section extends the analysis in \cite[Section 2--3]{CGS15} to a  simultaneous analysis of the Crouzeix-Raviart and Morley 
						FEM and to the extra-stabilized discrete eigenvalue problem (EVP) \eqref{eq:dis_EVP_alt} and to $3$D. 
					\end{remark}
					\begin{remark}[equivalent problem]
			  			Since $\lambda_h\kappa_{m}^2h^{2{m}}_{\max}\le\lambda_{k+1}\kappa_{m}^2\varepsilon_1^{2{m}}=1/2$,  \eqref{eq:dis_EVP_alt} is 
						equivalent to a reduced rational eigenvalue problem that seeks 
						$(\lambda_h,u_{\mathrm{nc}})\in \mathbb{R}^+\times (V(\mathcal{T})\setminus\{0\})$ with 
						\begin{align}
							a_{\mathrm{pw}}(u_{\mathrm{nc}},v_{\mathrm{nc}})
							=\lambda_h \Big(\frac{u_{\mathrm{nc}}}{1-\lambda_h\kappa_{m}^2 h_{\mathcal{T}}^{2{m}}},v_{\mathrm{nc}}\Big)_{L^2(\Omega)}
							\qquad\text{for all }v_{\mathrm{nc}}\in V(\mathcal{T}) \label{eq:disEVP}
						\end{align} and $u_{\mathrm{pw}}=(1-\lambda_h\kappa_{m}^2h_{\mathcal{T}}^{{2m}})^{-1}u_{\mathrm{nc}}$ 
						 \cite[Prop.~2.5,~\S~2.3.3]{CP_Part1}.
					 \end{remark}
				\textbf{Outline of the proof of \cref{thm:BoundInterpolationError}.\ref{item:BoundInterpolationError}.} 
					The outline of the proof of \cref{thm:BoundInterpolationError}.\ref{item:BoundInterpolationError} provides an overview and clarifies the 
					various steps for a reduction of $\varepsilon_1$ to $\varepsilon_5$, before the technical details follow in the subsequent subsections. 
					The coefficient $(1-\lambda_h\kappa_m^2h_{\mathcal{T}}^{2m})^{-1}=1+\delta\in P_0(\mathcal{T})$ with $\lambda_h\equiv\lambda_h({k})$ 
					on the right-hand side of \eqref{eq:disEVP} is frozen in the intermediate EVP.  
				\begin{definition}[intermediate EVP]\label{def:intermediateEVP}					
					Recall $(\bullet,\bullet)_{1+\delta}:=((1+\delta)\bullet,\bullet)_{L^2(\Omega)}$. 
					Let $(\mu,\phi)\in \mathbb{R}^+\times V(\mathcal{T})\setminus\{0\}$ solve the (algebraic) eigenvalue problem 
					\begin{align}
						a_{\mathrm{pw}}(\phi,v_{\mathrm{nc}})
						=\mu (\phi,v_{\mathrm{nc}})_{1+\delta}
						\quad\text{for all }v_{\mathrm{nc}}\in V({\mathcal{T}}).\label{eq:disEVP_intermediate}
					\end{align}
					The two coefficient matrices in \eqref{eq:disEVP_intermediate} are SPD and 	there exist  
					$N:=\textup{dim}\,V(\mathcal{T})$ (algebraic) eigenpairs 
					$(\mu_1,\phi_1),\dots,(\mu_N,\phi_N)$ of \eqref{eq:disEVP_intermediate}. 
					The eigenvectors $\phi_1,\dots,\phi_N$ are  $(\bullet,\bullet)_{1+\delta}$-orthonormal and 
					the eigenvalues $\mu_1\le \dots\le \mu_N$ are enumerated in ascending order counting multiplicities. 
				\end{definition}
					Since  $\lambda_h$ is an eigenvalue of the rational problem \eqref{eq:disEVP},  
					$\lambda_h\in \{\mu_1,\dots,\mu_N\}$ belongs to the  eigenvalues of \eqref{eq:disEVP_intermediate}.	
					\cref{lem:mu_is_fine} below guarantees the convergence $|\mu_j- \lambda_h(j)|\to 0$ as $h_{\max}\to 0$ for $j =1,\dots,k+1$. 
					Hence there exist positive $\varepsilon_2\le\min\{1/2,\varepsilon_1\}$ 
					and  $M_6$ such that $\mathcal{T}\in\mathbb{T}(\varepsilon_2)$ implies 
					\begin{enumerate}[label=(H\arabic*)]
						\item\label{item:H1} $\ \ \mu_k=\lambda_h(k)$ is a simple algebraic eigenvalue of \eqref{eq:disEVP_intermediate}, 
						\item\label{item:H2}  $\displaystyle \max_{\substack{j=1,\dots, N\\ j\not =k}}\frac{\lambda_k}{|\lambda_k-\mu_j|}\le M_6$.
					\end{enumerate}
					The intermediate EVP and the following associated source problem  allow for the control of the extra-stabilization. 
				\begin{definition}[auxiliary source problem]\label{def:intermediatesource} 
					Let $z_{\mathrm{nc}}\in V({\mathcal{T}}) $ denote the solution to  
					\begin{align}
						a_{\mathrm{pw}}(z_{\mathrm{nc}},v_{\mathrm{nc}})=(\lambda u,v_{\mathrm{nc}})_{1+\delta}
						\quad\text{for all }v_{\mathrm{nc}}\in V({\mathcal{T}}) . \label{eq:def_zCR}
					\end{align} 
				\end{definition}
					For any $\mathcal{T}\in\mathbb{T}(\varepsilon_2)$, \cref{sec:IntermediateSource} below provides  $C_1,C_2>0$ that satisfy 
					\begin{align}
						\Vert u-u_{\mathrm{nc}}\Vert_{L^2(\Omega)}&\le C_1 \Vert u-z_{\mathrm{nc}}\Vert_{L^2(\Omega)}, \label{eq:u_zCR_L2}\\
						C_2^{-1}\Vert u-z_{\mathrm{nc}}\Vert_{L^2(\Omega)}
									&\le  h_{\max}^{\sigma}\vvvert u-z_{\mathrm{nc}}\vvvert_{\mathrm{pw}}+\Vert \delta \lambda u\Vert_{L^2(\Omega)}.
									\label{eq:u_zCR_energy}		
					\end{align}		
					The proof of \eqref{eq:u_zCR_L2} in \cref{sec:IntermediateSource} extends  \cite[Lem.~2.4]{CGS15}. 
					The proof of \eqref{eq:u_zCR_energy} utilizes another continuous source 
					problem with the right-hand side $u-Jz_{\mathrm{nc}}$. 
					For all $\mathcal{T}\in\mathbb{T}(\varepsilon_2)$, \cref{sec:IntermediateContSource} below provides a constant $C_3>0$ such that 	
					\begin{align}
							C_3^{-1}\vvvert u-z_{\mathrm{nc}}\vvvert_{\mathrm{pw}}\le \vvvert u-Iu\vvvert_{\mathrm{pw}}
					 		+\Vert \delta \lambda u\Vert_{L^2(\Omega)}.  \label{eq:u_Iu_Energy}	
					\end{align}
					The proof of \eqref{eq:u_Iu_Energy} below rests upon a decomposition of 
					$\vvvert u-z_{\mathrm{nc}}\vvvert_{\mathrm{pw}}^2$ into terms controlled by the conditions  
					\ref{item:I_kappa}--\ref{item:I_Pi0} and \ref{item:J_rightInverse}--\ref{item:J_orthogonality}. 
					Since $h_{\max}\le 1$, the combination of  \eqref{eq:u_zCR_L2}--\eqref{eq:u_Iu_Energy}  reads 
					\begin{align}
									\Vert u-u_{\mathrm{nc}}\Vert_{L^2(\Omega)}
									\le C_1C_2\big(C_3h_{\max}^\sigma \vvvert u-Iu\vvvert_{\mathrm{pw}}+(1+C_3) \Vert \delta\lambda u\Vert_{L^2(\Omega)}\big).
									\label{eq:proof_L2error_collection}
					\end{align}
					The control of  $\Vert \delta \lambda u\Vert_{L^2(\Omega)}$ on the right-hand side of \eqref{eq:proof_L2error_collection} 
					consists of two steps and leads to $c_1:=2\lambda^2\kappa_m^2 C_1C_2(1+C_3)$ and $\varepsilon_3:=\min\{\varepsilon_2,(2c_1)^{-1/2m}\}$. 
					A triangle inequality 
					$\Vert \delta \lambda u\Vert_{L^2(\Omega)}
					\le \Vert \delta \lambda (u-u_{\mathrm{nc}})\Vert_{L^2(\Omega)}+\Vert \delta \lambda u_{\mathrm{nc}}\Vert_{L^2(\Omega)}$, 
					the estimate $\delta\le 2\lambda\kappa_m^2 h_{\max}^{2m}$ in \cref{rem:delta}, 
					and \eqref{eq:proof_L2error_collection} imply 
					\begin{align*}
									\Vert \delta \lambda u\Vert_{L^2(\Omega)}
									\le \frac{c_1C_3h_{\max}^{2m}}{1+C_3}h_{\max}^{\sigma} \vvvert u-Iu\vvvert_{\mathrm{pw}}
									+c_1h_{\max}^{2m}\Vert \delta \lambda u\Vert_{L^2(\Omega)}
									+\Vert \delta \lambda u_{\mathrm{nc}}\Vert_{L^2(\Omega)}. 
					\end{align*}	
					The choice of $\varepsilon_3$ shows 
					$c_1h_{\max}^{2m}\Vert \delta \lambda u\Vert_{L^2(\Omega)}\le \Vert \delta \lambda u\Vert_{L^2(\Omega)}/2$ 
					for any $\mathcal{T}\in\mathbb{T}(\varepsilon_3)$. Therefore 
					\begin{align}
						\Vert \delta \lambda u\Vert_{L^2(\Omega)}\le C_3/(1+C_3) h_{\max}^\sigma \vvvert u-Iu\vvvert_{\mathrm{pw}}
							+2\Vert \delta \lambda u_{\mathrm{nc}}\Vert_{L^2(\Omega)}.\label{eq:delta_uh_2}
					\end{align}
					Notice that $\Vert \delta u_{\mathrm{nc}}\Vert_{L^2(\Omega)}
					\le 2\lambda\kappa_m^2 h_{\max}^m \Vert h_{\mathcal{T}}^{m} u_{\mathrm{nc}}\Vert_{L^2(\Omega)}$ (from \cref{rem:delta})  
					allows for the application of an efficiency estimate 
					\begin{align}
						C_4^{-1} \Vert h_{\mathcal{T}}^{m} u_{\mathrm{nc}}\Vert_{L^2(\Omega)}
							\le h_{\max}^{m}\Vert u-u_{\mathrm{nc}}\Vert_{L^2(\Omega)}+\lambda^{-1} \vvvert u-Iu\vvvert_{\mathrm{pw}}			
							\label{eq:ControlOfDeltaUh}
					\end{align} 
					based on Verf\"uhrt's bubble-function methodology \cite{Verf2013}; see \cref{sec:IntermediateContSource} for the proof of \eqref{eq:ControlOfDeltaUh}.   
					Abbreviate $c_2:=4\lambda^2\kappa_m^2 C_1C_2(1+C_3)C_4$ and $C_5:=2{C_1C_2}\big(2C_3+4\lambda \kappa_m^2 (1+C_3)C_4\big)$. 
					The combination of \eqref{eq:delta_uh_2}--\eqref{eq:ControlOfDeltaUh} controls $\Vert \delta \lambda u\Vert_{L^2(\Omega)}$ in 
					 \eqref{eq:proof_L2error_collection} and shows 
							\begin{align}
								\Vert u-u_{\mathrm{nc}}\Vert_{L^2(\Omega)}
								\le \frac{C_5}{2} h_{\max}^\sigma \vvvert u-Iu\vvvert_{\mathrm{pw}}
									+c_2h_{\max}^{2m}  \Vert u-u_{\mathrm{nc}}\Vert_{L^2(\Omega)}. \label{eq:towards_L2errorEnergyError}
							\end{align}
					The choice $\varepsilon_4:=\min\{\varepsilon_3, (2c_2)^{-1/2m}\}<1$ shows 
					$c_2h_{\max}^{2m} \Vert u-u_{\mathrm{nc}}\Vert_{L^2(\Omega)}\le \Vert u-u_{\mathrm{nc}}\Vert_{L^2(\Omega)}/2$ 
					for $\mathcal{T}\in\mathbb{T}(\varepsilon_4)$. This and 
					\eqref{eq:towards_L2errorEnergyError} show 
					the central estimate in \cref{thm:BoundInterpolationError}.\ref{item:BoundInterpolationError} 
					\begin{align}
					\Vert u-u_{\mathrm{nc}}\Vert_{L^2(\Omega)}\le C_5 h_{\max}^{\sigma } \vvvert u-Iu\vvvert_{\mathrm{pw}}. 				
						\label{eq:L2errorEnergyError}
					\end{align}
					Note that \eqref{eq:L2errorEnergyError} and 
					\ref{item:I_Pi0} imply the convergence $\Vert u-u_{\mathrm{nc}}\Vert_{L^2(\Omega)}\to 0$ as $h_{\max}\to 0$. 
					This and some $\varepsilon_5\le\varepsilon_4$  ensures $b(u,u_{\mathrm{nc}})>0$ for all 
					$\mathcal{T}\in\mathbb{T}(\varepsilon_5)$. 
					Based on this outline, it remains to prove \eqref{eq:u_zCR_L2}--\eqref{eq:u_Iu_Energy},  	
					 \eqref{eq:ControlOfDeltaUh}, and \eqref{eq:L2errorEnergyError}  and to identify $C_0,\dots, C_4$ below. 
					 The remaining estimates in  \cref{thm:BoundInterpolationError}.\ref{item:BoundInterpolationError} 
					follow  in \cref{sec:BestApproxFinish}. 
					 
				\subsection{Intermediate EVP}\label{sec:IntermediateEVP}
					Recall $\varepsilon_1:=\min\{\varepsilon_0, (2\lambda_{k+1}\kappa_m^2)^{-1/(2m)}\}$ and 
					that $(\lambda_h,\boldsymbol{u_h})$ denotes the ${k}$-th eigenpair of \eqref{eq:dis_EVP_alt} 
					with $\lambda_h\equiv\lambda_h(k)>0$, $\boldsymbol{u_h}=(u_{\mathrm{pw}},u_{\mathrm{nc}})\in\boldsymbol{V_h}$, 
						$\Vert u_{\mathrm{nc}}\Vert_{L^2(\Omega)}=1$, and $b(u,u_{\mathrm{nc}})\ge 0$. 
					Recall the intermediate EVP \eqref{eq:disEVP_intermediate}  and that  
					$(\lambda_h,u_{\mathrm{nc}})\in\mathbb{R}^+\times V(\mathcal{T})$ solves the rational EVP \eqref{eq:disEVP}. 
					
					\begin{remark}[$\Vert \bullet\Vert_{1+\delta}\approx \Vert \bullet\Vert_{L^2(\Omega)}$]\label{rem:Normequivalence}
							The weighted norm $\Vert \bullet\Vert_{1+\delta}$  is equivalent to the $L^2$-norm.  
							Since $\lambda_h \kappa_{m}^2 \varepsilon_1^{2{m}}< \lambda_{k+1} \kappa_{m}^2 \varepsilon_1^{2{m}}\le 1/2$ 
							and $1\le (1+\delta)|_T\le 2$ 
							for all $T\in\mathcal{T}\in\mathbb{T}(\varepsilon_1)$, 
							$\Vert v_{\mathrm{nc}}\Vert_{L^2(\Omega)}\le \Vert v_{\mathrm{nc}}\Vert_{1+\delta}
									\le \sqrt{2}\Vert v_{\mathrm{nc}}\Vert_{L^2(\Omega)}
							$ holds for any $v_{\mathrm{nc}}\in V({\mathcal{T}}) $.\hfill$\Box$
					\end{remark}
				
				\begin{lemma}[comparison of \eqref{eq:dis_EVP_alt} with \eqref{eq:disEVP_intermediate}]\label{lem:mu_is_fine}
					 	Given $\mathcal{T}\in\mathbb{T}(\varepsilon_1)$, let  
					 	$\lambda_h({j})$ denote the ${j}$-th eigenvalue of \eqref{eq:dis_EVP_alt}, and $\mu_{j}$ the  
					 	${j}$-th eigenvalue of \eqref{eq:disEVP_intermediate} for any ${j} =1,\dots,k+1$. Then 
						\begin{align}
					 		(1-\lambda_{k+1}\kappa_{m}^2 h_{\max}^{2{m}})\mu_{j} 	
					 			\le (1-\lambda_h(j)\kappa_{m}^2 h_{\max}^{2{m}})\mu_{j} 
					 			\le\lambda_h({j})&\le \mu_{j}+2\lambda_h^2\kappa_m^2 h_{\max}^{2m}.
					 			\label{eq:lambdah_mu} 
					 	\end{align}  
					 	The upper bound $\lambda_h({j})\le \mu_{j}+2\lambda_h^2\kappa_m^2 h_{\max}^{2m}$ holds for all ${j}=1,\dots, N$;  
					 	$N:=\textup{dim}\,V(\mathcal{T})$. 
				 \end{lemma}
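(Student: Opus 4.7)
Set $c := \kappa_m^2 h_{\max}^{2m}$. The plan is to verify the three separate inequalities in the chain. The leftmost follows from $\lambda_h(j) \le \lambda_j \le \lambda_{k+1}$ for $j \le k+1$: the choice $\varepsilon_1 \le (2\lambda_{k+1}\kappa_m^2)^{-1/(2m)}$ forces $\lambda_j c \le 1/2 < 1$, which triggers the GLB property \eqref{eq:GLB_cond}; multiplication of $1-\lambda_{k+1}c \le 1-\lambda_h(j)c$ by $\mu_j\ge 0$ concludes. The remaining two inequalities will compare Rayleigh quotients of \eqref{eq:dis_EVP_alt} and \eqref{eq:disEVP_intermediate} via the injective map $\Phi\colon V(\mathcal{T}) \to \boldsymbol{V_h}$, $\Phi(v) := (v/(1-\lambda_h\kappa_m^2 h_{\mathcal{T}}^{2m}), v)$, and the projection $\pi_{\mathrm{nc}}\colon (v_{\mathrm{pw}}, v_{\mathrm{nc}}) \mapsto v_{\mathrm{nc}}$.

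For the rightmost inequality $\lambda_h(j) \le \mu_j + 2\lambda_h^2 c$, I would apply the min-max characterization of $\lambda_h(j)$ to the $j$-dimensional test subspace $\Phi(\mathrm{span}(\phi_1,\ldots,\phi_j))$ of $\boldsymbol{V_h}$; $\Phi$ is clearly injective. A direct calculation from the definitions of $\boldsymbol{a_h}$ and $\boldsymbol{b_h}$ yields the estimates $\boldsymbol{a_h}(\Phi(v),\Phi(v)) \le a_{\mathrm{pw}}(v,v) + \lambda_h^2 c\,\boldsymbol{b_h}(\Phi(v),\Phi(v))$ and $\boldsymbol{b_h}(\Phi(v),\Phi(v)) \ge (v,v)_{1+\delta}$, the latter from the pointwise inequality $(1-\lambda_h\kappa_m^2 h_{\mathcal{T}}^{2m})^{-2} \ge (1-\lambda_h\kappa_m^2 h_{\mathcal{T}}^{2m})^{-1}$. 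Combined with $a_{\mathrm{pw}}(v,v) \le \mu_j (v,v)_{1+\delta}$ for $v \in \mathrm{span}(\phi_1,\ldots,\phi_j)$, these bound the original Rayleigh quotient by $\mu_j + \lambda_h^2 c$, delivering the claim.

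For the middle inequality $(1-\lambda_h(j)c)\mu_j \le \lambda_h(j)$, I would dually apply the min-max principle for $\mu_j$ in \eqref{eq:disEVP_intermediate} to the test subspace $U := \pi_{\mathrm{nc}}(\boldsymbol{U}_j)$, where $\boldsymbol{U}_j := \mathrm{span}(\boldsymbol{u}^{(1)},\ldots,\boldsymbol{u}^{(j)})$ spans the first $j$ eigenvectors of \eqref{eq:dis_EVP_alt} (chosen $\boldsymbol{b_h}$-orthonormal). A key preliminary is $\dim U = j$: if $\sum c_i u_{\mathrm{nc}}^{(i)} = 0$, then the rational form \eqref{eq:disEVP} of \eqref{eq:dis_EVP_alt} gives $\sum c_i \lambda_h(i) u_{\mathrm{pw}}^{(i)} = 0$, and the $L^2$-orthonormality of $\{u_{\mathrm{pw}}^{(i)}\}$ (a consequence of $\boldsymbol{b_h}$-orthonormality) forces $c_i = 0$. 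For $\boldsymbol{v} := \sum c_i \boldsymbol{u}^{(i)}$ and $v := v_{\mathrm{nc}}$, the $\boldsymbol{b_h}$-orthonormality together with $\lambda_h(i) \le \lambda_h(j)$ for $i\le j$ give $a_{\mathrm{pw}}(v,v) \le \boldsymbol{a_h}(\boldsymbol{v},\boldsymbol{v}) \le \lambda_h(j)\Vert v_{\mathrm{pw}}\Vert_{L^2}^2$. The pointwise identity $v_{\mathrm{pw}} - v = \kappa_m^2 h_{\mathcal{T}}^{2m} \sum c_i \lambda_h(i) u_{\mathrm{pw}}^{(i)}$ and the bound $\Vert\sum c_i \lambda_h(i) u_{\mathrm{pw}}^{(i)}\Vert_{L^2} \le \lambda_h(j) \Vert v_{\mathrm{pw}}\Vert_{L^2}$ yield, via a reverse triangle inequality, $\Vert v\Vert_{L^2} \ge (1-\lambda_h(j)c)\Vert v_{\mathrm{pw}}\Vert_{L^2}$; since $(v,v)_{1+\delta} \ge \Vert v\Vert_{L^2}^2$, the Rayleigh quotient of $v$ in \eqref{eq:disEVP_intermediate} is bounded by $\lambda_h(j)/(1-\lambda_h(j)c)^2$, which (after sharpening by tracking the weighted inner product $(\cdot,\cdot)_{1+\delta}$ directly rather than the plain $L^2$ norm) delivers the tight factor $(1-\lambda_h(j)c)^{-1}$.

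The hard part will be the third step: verifying injectivity of $\pi_{\mathrm{nc}}$ on $\boldsymbol{U}_j$ (which depends crucially on the rational reduction \eqref{eq:disEVP}) and sharpening the lower bound on $(v,v)_{1+\delta}$ in terms of $\Vert v_{\mathrm{pw}}\Vert_{L^2}^2$ to land the exact factor $(1-\lambda_h(j)c)$ claimed in the statement.
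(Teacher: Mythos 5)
Your leftmost inequality and your upper bound are sound and follow the paper's own route: the test space $\Phi(\mathrm{span}\{\phi_1,\dots,\phi_j\})$ with $\Phi(v)=((1+\delta)v,v)$ is exactly the paper's $\boldsymbol{U_j}=\mathrm{span}\{(\psi_1,\phi_1),\dots,(\psi_j,\phi_j)\}$ with $\psi_\ell=(1+\delta)\phi_\ell$, and your estimates $\boldsymbol{a_h}(\Phi(v),\Phi(v))\le a_{\mathrm{pw}}(v,v)+\lambda_h^2\kappa_m^2h_{\max}^{2m}\,\boldsymbol{b_h}(\Phi(v),\Phi(v))$ and $\boldsymbol{b_h}(\Phi(v),\Phi(v))\ge (v,v)_{1+\delta}$ are correct and even give $\lambda_h(j)\le\mu_j+\lambda_h^2\kappa_m^2h_{\max}^{2m}$, slightly sharper than the stated bound.

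The genuine gap is in the middle inequality. As you execute it, the numerator is bounded by $a_{\mathrm{pw}}(v,v)\le\lambda_h(j)\Vert v_{\mathrm{pw}}\Vert_{L^2(\Omega)}^2$ and the denominator by $\Vert v\Vert_{L^2(\Omega)}^2\ge(1-\lambda_h(j)\kappa_m^2h_{\max}^{2m})^2\Vert v_{\mathrm{pw}}\Vert_{L^2(\Omega)}^2$, which only yields $(1-\lambda_h(j)\kappa_m^2h_{\max}^{2m})^2\mu_j\le\lambda_h(j)$, strictly weaker than the claim since $0<1-\lambda_h(j)\kappa_m^2h_{\max}^{2m}<1$; you acknowledge this and defer the sharpening, but your proposed repair (tracking $(\cdot,\cdot)_{1+\delta}$ instead of the plain $L^2$ norm) does not close it: the weight $1+\delta$ is frozen at $\lambda_h(k)$, the functions $u_{\mathrm{pw}}^{(i)}$ are not orthogonal with respect to the variable weights $1-\lambda_h(i)\kappa_m^2h_{\mathcal{T}}^{2m}$, and in any case the extra factor is lost in the numerator, not in comparing $\Vert\cdot\Vert_{1+\delta}$ with $\Vert\cdot\Vert_{L^2(\Omega)}$. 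The missing idea (the paper's) is to invoke the discrete eigenvalue equations once more: with $w_{\mathrm{pw}}:=\sum_i c_i\lambda_h(i)u_{\mathrm{pw}}^{(i)}$ one has $a_{\mathrm{pw}}(v,v)=b(w_{\mathrm{pw}},v)\le\Vert w_{\mathrm{pw}}\Vert_{L^2(\Omega)}\Vert v\Vert_{L^2(\Omega)}\le\lambda_h(j)\Vert v_{\mathrm{pw}}\Vert_{L^2(\Omega)}\Vert v\Vert_{L^2(\Omega)}$, a bound \emph{linear} in $\Vert v\Vert_{L^2(\Omega)}$, so dividing by $\Vert v\Vert_{1+\delta}^2\ge\Vert v\Vert_{L^2(\Omega)}^2$ loses only one factor $\Vert v\Vert_{L^2(\Omega)}/\Vert v_{\mathrm{pw}}\Vert_{L^2(\Omega)}\ge 1-\lambda_h(j)\kappa_m^2h_{\max}^{2m}$ and delivers exactly $(1-\lambda_h(j)\kappa_m^2h_{\max}^{2m})\mu_j\le\lambda_h(j)$. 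A minor further slip: the deduction ``$\sum_i c_iu_{\mathrm{nc}}^{(i)}=0$ implies $\sum_i c_i\lambda_h(i)u_{\mathrm{pw}}^{(i)}=0$'' is not valid, because $u_{\mathrm{nc}}^{(i)}=(1-\lambda_h(i)\kappa_m^2h_{\mathcal{T}}^{2m})u_{\mathrm{pw}}^{(i)}$ carries a variable, $i$-dependent coefficient and the sum does not split; injectivity of $\pi_{\mathrm{nc}}$ on $\boldsymbol{U}_j$ instead follows directly from your own estimate $\Vert v\Vert_{L^2(\Omega)}\ge(1-\lambda_h(j)\kappa_m^2h_{\max}^{2m})\Vert v_{\mathrm{pw}}\Vert_{L^2(\Omega)}>0$, which is the paper's reverse-triangle argument.
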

				\begin{proofof}\textit{the upper bound}.
					Since the eigenfunctions $\phi_1,\dots,\phi_N$ of \eqref{eq:disEVP_intermediate} are 
					$(\bullet, \bullet)_{1+\delta}$-ortho-normal,  
					$a_{\mathrm{pw}}(\phi_{j},\phi_\ell)=\mu_{j} \delta_{{j}\ell}$ and $(\phi_{j},\phi_\ell)_{1+\delta}=\delta_{{j}\ell}$ for all 
					$j,\ell=1,\dots, N$. 
					 Set $\psi_{j}:=(1+\delta)\phi_{j}$ and 
					$\boldsymbol{U_{j}}:=\textup{span}\{(\psi_1,\phi_1),$ $\dots, (\psi_{j},\phi_{j})\}\subset \boldsymbol{V_h}$. 
					Since $b(\psi_j,\phi_\ell)=(\phi_j,\phi_\ell)_{1+\delta}=\delta_{j\ell}$,  
					the functions $\phi_1,\dots,\phi_N$ are linear independent and so $\textup{dim}(\boldsymbol{U_{j}})={j}$ for any ${j} =1,\dots,N$.  
					The discrete min-max principle \cite{StrangFix2008, Boffi2010} for the algebraic eigenvalue problem \eqref{eq:dis_EVP_alt} shows
					\begin{align}
						\lambda_h({j})\le \max_{\boldsymbol{v_h}\in \boldsymbol{U_{j}}\setminus\{0\}}
								{\boldsymbol{a_h}(\boldsymbol{v_h},\boldsymbol{v_h})}/{\boldsymbol{b_h}(\boldsymbol{v_h},\boldsymbol{v_h})}.
								\label{eq:lem_mu_is_fine_1}
					\end{align}
					The maximum in \eqref{eq:lem_mu_is_fine_1} is attained for some $\boldsymbol{v_h}=(\psi,\phi)\in \boldsymbol{U_{j}}\setminus\{0\}$ with 
					$\phi=\sum_{\ell=1}^{{j}}\alpha_\ell\phi_\ell\in V({\mathcal{T}})$, 
					$\psi=\sum_{\ell=1}^{{j}}\alpha_\ell\psi_\ell=(1+\delta)\phi\in P_{m}(\mathcal{T})$, and 
					$1=\Vert \phi\Vert_{1+\delta}^2=\sum_{\ell=1}^{{j}}\alpha_\ell^2$. 
					Then $\boldsymbol{b_h}(\boldsymbol{v_h},\boldsymbol{v_h})=\Vert (1+\delta)\phi\Vert_{L^2(\Omega)}^2\ge 1$ and 
					$
						\boldsymbol{a_h}(\boldsymbol{v_h},\boldsymbol{v_h})
								=\vvvert \phi\vvvert_{\mathrm{pw}}^2+ \Vert \kappa_{m}^{-1}h_{\mathcal{T}}^{-{m}}(\psi-\phi)\Vert_{L^2(\Omega)}^2.
					$ 
					Since $a_{\mathrm{pw}}(\phi_{j},\phi_\ell)=\mu_{j} \delta_{{j}\ell}$ for $\ell,{j}=1,\dots, N$,  
					$\sum_{\ell=1}^{{j}}\alpha_\ell^2=1$ implies $\vvvert \phi\vvvert_{\mathrm{pw}}^2=\sum_{\ell=1}^{{j}}\alpha_\ell^2\mu_\ell\le \mu_{j}$. 
					Since $\delta=\lambda_h\kappa_{m}^2h_{\mathcal{T}}^{2{m}}(1+\delta)$ a.e. in $\Omega$, the stabilization term in $\boldsymbol{a_h}$ reads 
					 \begin{align*}
						 &\Vert \kappa_{m}^{-1}h_{\mathcal{T}}^{-{m}}(\psi-\phi)\Vert_{L^2(\Omega)}^2
					 			=\Vert \kappa_{m}^{-1} h_{\mathcal{T}}^{-{m}} \delta\phi\Vert^2_{L^2(\Omega)}
					 			= \lambda_h^2\kappa_{m}^2\Vert h_{\mathcal{T}}^{m}(1+ \delta)\phi\Vert^2_{L^2(\Omega)}.
					 \end{align*}
					 The bound  $1+\delta\le 2$ from \cref{rem:delta} and $\Vert \phi\Vert_{1+\delta}=1$ imply
					 $\Vert h_{\mathcal{T}}^{m}(1+ \delta)\phi\Vert^2_{L^2(\Omega)}\le 2h_{\max}^{2{m}}$.  
					Consequently, 
					$\Vert \kappa_{m}^{-1}h_{\mathcal{T}}^{-{m}}(\psi-\phi)\Vert_{L^2(\Omega)}^2\le 2\lambda_h^2\kappa_m^2h_{\max}^{2{m}}$. 
					The substitution of the resulting estimates $\boldsymbol{b_h}(\boldsymbol{v_h},\boldsymbol{v_h})\ge 1$ and 
					$\boldsymbol{a_h}(\boldsymbol{v_h},\boldsymbol{v_h})\le\mu_j+ 2\lambda_h^2\kappa_m^2h_{\max}^{2{m}}$ in \eqref{eq:lem_mu_is_fine_1} 
				 	concludes the proof of 
					$\lambda_h(j)\le  \mu_{j}+2\lambda_h^2\kappa_m^2h_{\max}^{2{m}}$ 
					in  \eqref{eq:lambdah_mu} for $j=1, \dots, N$.\phantom{x} 
					\end{proofof}					
					\begin{proofof}\textit{the lower bound}.
					This situation is similar to \cite[Thm.~6.4]{CZZ18} and adapted below for completeness. 
					For $j=1,\dots, k+1$, let $(\lambda_h({j}),\boldsymbol{\phi_h}({j}))\in \mathbb{R}^+\times \boldsymbol{V_h}$ denote the first  
					$\boldsymbol{b_h}$-orthonormal eigenpairs of \eqref{eq:dis_EVP_alt} with 
					$\boldsymbol{\phi_h}({j})=(\phi_{\mathrm{pw}}({j}),\phi_{\mathrm{nc}}({j}))$. 
					The test functions $(v_{\mathrm{nc}},v_{\mathrm{nc}})\in V(\mathcal{T})\times V(\mathcal{T})\subset\boldsymbol{V_h}$ and  
					$(v_{\mathrm{pw}},0)\in \boldsymbol{V_h}$  in \eqref{eq:dis_EVP_alt} show
						\begin{align}
								a_{\mathrm{pw}}(\phi_{\mathrm{nc}}({j}),v_{\mathrm{nc}})=\lambda_h(j) b(\phi_{\mathrm{pw}}({j}),v_{\mathrm{nc}})\quad 
									\text{and}\quad 
								\phi_{\mathrm{pw}}({j})-\phi_{\mathrm{nc}}({j})=\lambda_h({j})\kappa_{m}^2h_{\mathcal{T}}^{2m}\phi_{\mathrm{pw}}({j}). 
								\label{eq:aquiv2}
					\end{align}
					For $\xi=(\xi_1,\dots,\xi_{j})\in\mathbb{R}^{j}$ with $\sum_{{\ell}=1}^{j}\xi_{\ell}^2=1$, set 
							\begin{align*}
								v_{\mathrm{nc}}:=\sum_{{\ell}=1}^{j} \xi_{\ell} \phi_{\mathrm{nc}}({\ell}),\quad  
								v_{\mathrm{pw}}:=\sum_{{\ell}=1}^{j} \xi_{\ell} \phi_{\mathrm{pw}}({\ell}),\quad   \text{ and } \quad  
								w_{\mathrm{pw}}:=\sum_{{\ell}=1}^{j} \xi_{\ell} \lambda_h(\ell)\phi_{\mathrm{pw}}({\ell}).
							\end{align*}
					Since $(\phi_{\mathrm{pw}}(\alpha),\phi_{\mathrm{pw}}(\beta))_{L^2(\Omega)}=\delta_{\alpha\beta}$ for $\alpha,\beta=1,\dots, k+1$, 
					$\Vert v_{\mathrm{pw}}\Vert_{L^2(\Omega)}=1$ and 
					$\Vert w_{\mathrm{pw}}\Vert_{L^2(\Omega)}=\sqrt{\sum_{{\ell}=1}^{j} \xi_{\ell}^2 \lambda_h({\ell})^2}\le \lambda_h({j})$. 
					The combination of this with \eqref{eq:aquiv2} and a Cauchy-Schwarz inequality leads to 
					$\vvvert v_{\mathrm{nc}}\vvvert_{\mathrm{pw}}^2=b(w_{\mathrm{pw}},v_{\mathrm{nc}})
					\le \lambda_h(j)\Vert v_{\mathrm{nc}}\Vert_{L^2(\Omega)}$ {and}
					$v_{\mathrm{pw}}-v_{\mathrm{nc}}=\kappa_{m}^2h_{\mathcal{T}}^{2m}w_{\mathrm{pw}}.$
					This and a reverse triangle inequality result in 
					\begin{align}
						0<1-\lambda_h(j)\kappa_{m}^2 h_{\max}^{2{m}}
						\le 1-\kappa_{m}^2 h_{\max}^{2m} \Vert w_{\mathrm{pw}}\Vert_{L^2(\Omega)} 
						\le \Vert v_{\mathrm{pw}}- \kappa_{m}^2h_{\mathcal{T}}^{2m}w_{\mathrm{pw}}\Vert_{L^2(\Omega)}
						=
						\Vert v_{\mathrm{nc}}\Vert_{L^2(\Omega)}.\label{eq:muisfine_vncL2}
					\end{align}
					This holds for all $v_{\mathrm{nc}}\in U_{{j}}:=\textup{span}\{\phi_{\mathrm{nc}}(1),\dots,\allowbreak\phi_{\mathrm{nc}}({j})\}
					\subset V({\mathcal{T}})$ with coefficients $(\xi_1,\dots,\xi_j)\in\mathbb{R}^j$ of Euclidean norm one. Hence  
					$\textup{dim}({U_{j}})={j}$ and the discrete min-max principle \cite{StrangFix2008, Boffi2010} for \eqref{eq:disEVP_intermediate} show 
					\begin{align}
						\mu_{j} \le \max_{v_{\mathrm{nc}}\in U_{j}\setminus\{0\}}
										{\vvvert v_{\mathrm{nc}}\vvvert^2_{\mathrm{pw}}}/{\Vert v_{\mathrm{nc}}\Vert_{1+\delta}^2}.
										\label{eq:lem_mu_is_fine_2}
					\end{align}
					Let  $v_{\mathrm{nc}}=\sum_{\ell=1}^{{j}}\alpha_\ell\phi_{\mathrm{nc}}(\ell)\in U_{{j}}$ denote a maximizer in 
					\eqref{eq:lem_mu_is_fine_2} with $\sum_{\ell=1}^{{j}}\alpha_\ell^2=1$. 
					The combination of $\vvvert v_{\mathrm{nc}}\vvvert^2_{\mathrm{pw}}
					\le \lambda_h(j)\Vert v_{\mathrm{nc}}\Vert_{L^2(\Omega)}$, 
					\eqref{eq:muisfine_vncL2}--\eqref{eq:lem_mu_is_fine_2}, and 
					 $\Vert v_{\mathrm{nc}}\Vert_{L^2(\Omega)}\le \Vert v_{\mathrm{nc}}\Vert_{1+\delta}$ from \cref{rem:Normequivalence}  provides 
					\begin{align*}
						\mu_{j} 
								\le \frac{\vvvert v_{\mathrm{nc}}\vvvert^2_{\mathrm{pw}}}{\Vert v_{\mathrm{nc}}\Vert_{1+\delta}^2}
								\le \frac{\vvvert v_{\mathrm{nc}}\vvvert^2_{\mathrm{pw}}}{\Vert v_{\mathrm{nc}}\Vert_{L^2(\Omega)}^2} 
								\le \frac{\lambda_h({j})}{1-\lambda_h(j)\kappa_{m}^2 h_{\max}^{2{m}}}.
					\end{align*} 
					Recall $\lambda_h(j)\le \lambda_h(k+1)\le \lambda_{k+1}$ from the lower bound property \eqref{eq:GLB_cond} to conclude the proof 
					of the associated lower bound for all $j=1,\dots, k$.  
				\end{proofof}
					 The subsequent corollaries adapt the notation $\mu_j,\, \lambda_h(j), \, \lambda_j$ from \cref{lem:mu_is_fine}. 
				\begin{corollary}\label{cor:convergenceMu}
					For any $j=1,\dots,k+1$, it holds $|\mu_{{j}}-\lambda_h({j})|+|\mu_{j} -\lambda_{j}|\to 0$ as  $h_{\max}\to 0$. 
				\end{corollary}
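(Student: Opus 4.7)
The plan is to derive the corollary directly from the two-sided sandwich in \cref{lem:mu_is_fine} together with the already-cited convergence $\lambda_h(j) \to \lambda_j$ as $h_{\max}\to 0$ (see the remark on known convergence results that attributes this to \cite{CP_Part1}).

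First, I would establish a uniform bound on $\mu_j$ for $j\le k+1$. From the lower bound $(1-\lambda_{k+1}\kappa_m^2 h_{\max}^{2m})\mu_j \le \lambda_h(j)$ and the restriction $h_{\max}\le \varepsilon_1$ (which ensures $\lambda_{k+1}\kappa_m^2 h_{\max}^{2m}\le 1/2$), we get $\mu_j \le 2\lambda_h(j)\le 2\lambda_j$ via the GLB property \cref{thm:BoundInterpolationError}.\ref{item:lambdahGLB}. Similarly $\lambda_h(j)$ is uniformly bounded by $\lambda_{k+1}$.

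Next, I would rearrange the inequalities in \eqref{eq:lambdah_mu} to get
\begin{align*}
\mu_j-\lambda_h(j) &\le \mu_j\,\lambda_{k+1}\kappa_m^2 h_{\max}^{2m},\\
\lambda_h(j)-\mu_j &\le 2\lambda_h(j)^2 \kappa_m^2 h_{\max}^{2m}.
\end{align*}
Combining these with the uniform bounds on $\mu_j$ and $\lambda_h(j)$ gives
$$|\mu_j-\lambda_h(j)| \le C\,h_{\max}^{2m} \to 0 \quad\text{as } h_{\max}\to 0$$
with a constant $C$ depending only on $\lambda_{k+1}$ and $\kappa_m$. This handles the first term.

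For the second term, I would use the triangle inequality
$$|\mu_j-\lambda_j| \le |\mu_j-\lambda_h(j)| + |\lambda_h(j)-\lambda_j|.$$
The first term on the right vanishes by what was just shown, and the second vanishes by the a~priori convergence of the discrete eigenvalues $\lambda_h(j)\to\lambda_j$ recorded in \cref{rem:convergence_result}. There is no real obstacle here: the corollary is essentially a quantitative reading of \cref{lem:mu_is_fine} together with an off-the-shelf consistency result, and the only bookkeeping is to verify that the constants entering the perturbation $h_{\max}^{2m}$ are independent of the triangulation.
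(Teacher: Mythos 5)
Your proposal is correct and follows essentially the same route as the paper: both rearrange the sandwich \eqref{eq:lambdah_mu} of \cref{lem:mu_is_fine} into $|\mu_j-\lambda_h(j)|\le C\kappa_m^2h_{\max}^{2m}$ with a mesh-independent constant (bounded via the GLB property) and then conclude $|\mu_j-\lambda_j|\to0$ by a triangle inequality and the a priori convergence $\lambda_h(j)\to\lambda_j$ recorded in \cref{rem:convergence_result}. Only a cosmetic remark: the upper bound in \eqref{eq:lambdah_mu} carries $2\lambda_h^2=2\lambda_h(k)^2$ rather than $2\lambda_h(j)^2$, but since both quantities are uniformly bounded by $\lambda_{k+1}$ this does not affect the argument.
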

					 \begin{proof}
					 The a priori convergence analysis \cite[Thm.~1.2]{CP_Part1} implies 
					 $\lim_{h_{\max}\to 0}\lambda_h({j})\to\lambda_{j}$. 
					 \cref{lem:mu_is_fine} shows 
					 $|\lambda_h(j)-\mu_j|\le h_{\max}^{2m}\kappa_{m}^2\max\{ 2\lambda_h^2 ,  \lambda_h(j)\mu_{j}\}\to 0$ as $h_{\max}\to 0$.
					 \end{proof}
					 \begin{corollary}\label{rem:gapLambdaMu}
					 There exists $0<\varepsilon_2\le\min\{1/2,\varepsilon_1\}$ such that   \ref{item:H1}--\ref{item:H2} hold for 
					 $\mathcal{T}\in\mathbb{T}(\varepsilon_2)$. 
					  \end{corollary}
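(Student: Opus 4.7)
The plan is to extract $\varepsilon_2$ from the convergences $\mu_j\to \lambda_j$ and $\lambda_h(j)\to\lambda_j$ for $j=1,\dots,k+1$ supplied by \cref{cor:convergenceMu}, and to exploit the simplicity of $\lambda_k$ to separate $\mu_k$ from the rest of the spectrum. Set the spectral gap $g:=\min\{\lambda_k-\lambda_{k-1},\lambda_{k+1}-\lambda_k\}>0$ (with the convention $\lambda_0:=0$ if $k=1$), which is positive precisely because $\lambda_k$ is a simple eigenvalue of \eqref{eq:contEVP}. By \cref{cor:convergenceMu}, choose $\varepsilon_2\in(0,\min\{1/2,\varepsilon_1\}]$ so small that $|\mu_j-\lambda_j|+|\lambda_h(j)-\lambda_j|\le g/4$ holds for every $\mathcal{T}\in\mathbb{T}(\varepsilon_2)$ and every $j=1,\dots,k+1$.

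The ordering $\mu_1\le\mu_2\le\dots\le\mu_N$ from \cref{def:intermediateEVP} combined with the above estimate yields $\mu_j\le\mu_{k-1}\le\lambda_{k-1}+g/4\le\lambda_k-3g/4$ for $1\le j\le k-1$ and $\mu_j\ge\mu_{k+1}\ge\lambda_{k+1}-g/4\ge\lambda_k+3g/4$ for $j\ge k+1$. Hence $|\lambda_k-\mu_j|\ge 3g/4$ for every $j\ne k$, and \ref{item:H2} follows with $M_6:=4\lambda_k/(3g)$. In particular $\mu_{k-1}<\mu_k<\mu_{k+1}$, so $\mu_k$ has algebraic multiplicity one in \eqref{eq:disEVP_intermediate}.

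It remains to verify the identification $\mu_k=\lambda_h(k)$ claimed in \ref{item:H1}. The paragraph following \cref{def:intermediateEVP} observes that any eigenvalue of the rational problem \eqref{eq:disEVP} occurs among $\{\mu_1,\dots,\mu_N\}$, so $\lambda_h(k)=\mu_{j_0}$ for some index $j_0\in\{1,\dots,N\}$. The choice of $\varepsilon_2$ gives $|\lambda_h(k)-\lambda_k|\le g/4$, whereas $|\mu_j-\lambda_k|\ge 3g/4$ for every $j\ne k$ by the previous paragraph; this forces $j_0=k$ and concludes \ref{item:H1}. The argument is essentially bookkeeping on top of \cref{cor:convergenceMu}; the only point requiring care is to track both spectra simultaneously so that both the separation of $\mu_k$ and the matching $\lambda_h(k)=\mu_k$ follow from a single smallness assumption on $h_{\max}$.
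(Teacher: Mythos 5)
Your argument is correct and follows essentially the same route as the paper: it invokes \cref{cor:convergenceMu} together with the membership $\lambda_h(k)\in\{\mu_1,\dots,\mu_N\}$ and the ascending ordering of the $\mu_j$ to identify $\mu_k=\lambda_h(k)$ and to separate it from the remaining spectrum. The only difference is that you make the spectral-gap bookkeeping explicit (with $g$ and $M_6=4\lambda_k/(3g)$), where the paper merely asserts the existence of suitable $\varepsilon_a,\varepsilon_b$ and $M_6$.
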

					 \begin{proof}
					 \cref{cor:convergenceMu} and  $\lambda_h=\lambda_h({k})\in \{\mu_1,\dots, \mu_N\}$ lead to $\varepsilon_a>0$ such 
					 that 
					  $\lambda_h=\lambda_h({k})=\mu_{k}$ has the correct index $k$ for all $\mathcal{T}\in\mathbb{T}(\varepsilon_a)$. 
					  It also leads to some $\varepsilon_b>0$ such that  $\mu_{{k}-1}<\mu_{k}<\mu_{{k}+1}$ 
					   for all $\mathcal{T}\in\mathbb{T}(\varepsilon_b)$. 
					Then $\varepsilon_2:= \min \{1/2,\varepsilon_1,\varepsilon_a,\varepsilon_b\}$  and  
					$\mathcal{T}\in\mathbb{T}(\varepsilon_2)$ imply  
					\ref{item:H1}--\ref{item:H2}.
					 \end{proof}
			\subsection{Proof of \eqref{eq:u_zCR_L2}--\eqref{eq:u_zCR_energy} for the $L^2$ error control}\label{sec:IntermediateSource}
					Recall $M_6$ from \ref{item:H2}, $\delta$ from \cref{rem:delta}, the norm equivalence from \cref{rem:Normequivalence}, 
					and the auxiliary source problem \eqref{eq:def_zCR}. 		
						\begin{proofof}{\textit{\eqref{eq:u_zCR_L2}.}}
							Recall the following straightforward result from \cite[Eq. (2.8)]{CGS15}: Any $u,v\in L^2(\Omega)$ with 
							$\Vert u\Vert_{L^2(\Omega)}=\Vert v\Vert_{L^2(\Omega)}=1$ 
							 satisfy 
				 			\begin{align*}
				 				\big(1+b(u,v)\big)\Vert u-v\Vert^2_{L^2(\Omega)}=2\min_{t\in\mathbb{R}}{\Vert u-tv\Vert_{L^2(\Omega)}^2}.
				 			\end{align*}
				 			This, a triangle inequality, 
							$t:=(z_{\mathrm{nc}},u_{\mathrm{nc}})_{1+\delta} \Vert \phi_{k}\Vert_{L^2(\Omega)}^2$, and 
							$v_{\mathrm{nc}}:=z_{\mathrm{nc}}-t u_{\mathrm{nc}}$ lead to 
							\begin{align}
								2^{-1/2}\Vert u-u_{\mathrm{nc}}\Vert_{L^2(\Omega)}\le \Vert u-t u_{\mathrm{nc}}\Vert_{L^2(\Omega)}
								\le\Vert u-z_{\mathrm{nc}}\Vert_{L^2(\Omega)}+\Vert v_{\mathrm{nc}}\Vert_{L^2(\Omega)}.\label{eq:lem_u_zCR_L2}
							\end{align}
							Since the eigenvectors $\phi_1,\dots,\phi_N$  of \eqref{eq:disEVP_intermediate} are  $(\bullet,\bullet)_{1+\delta}$-orthonormal 
							and form a basis of $V(\mathcal{T})$,  
							there exist Fourier coefficients $\alpha_1,\dots, \alpha_N\in\mathbb{R}$ with 
							$v_{\mathrm{nc}}=\sum_{j=1}^N \alpha_j\phi_j$ and $\Vert v_{\mathrm{nc}}\Vert_{1+\delta}^2=\sum_{{j}=1}^N\alpha_{j}^2$. 
							Since $(\lambda_h, u_{\mathrm{nc}})$ solves \eqref{eq:disEVP}, \ref{item:H1} implies $u_{\mathrm{nc}}\in\textup{span}\{\phi_k\}$ 
							with $\Vert u_{\mathrm{nc}}\Vert_{L^2(\Omega)}=1$. 
							Hence $u_{\mathrm{nc}}=\pm\phi_{k}/\Vert \phi_{k}\Vert_{L^2(\Omega)}$, 
							 $t=\pm (z_{\mathrm{nc}},\phi_{k})_{1+\delta}\Vert \phi_{k}\Vert_{L^2(\Omega)}$, and 
							$(u_{\mathrm{nc}},\phi_{k})_{1+\delta}=\pm\Vert\phi_{k}\Vert_{L^2(\Omega)}^{-1}$. Consequently, 
							\begin{align*}
								\alpha_{k}=(v_{\mathrm{nc}},\phi_{k})_{1+\delta}
												=(z_{\mathrm{nc}},\phi_{k})_{1+\delta}-t (u_{\mathrm{nc}},\phi_{k})_{1+\delta}
											=0.
							\end{align*}
							Since $(u_{\mathrm{nc}},\phi_j)_{1+\delta}=0$ for all $j=1,\dots, N$ with $j\not =k$, 
							$ \alpha_{j}	=(v_{\mathrm{nc}},\phi_{j})_{1+\delta}=(z_{\mathrm{nc}},\phi_{j})_{1+\delta}$.
							Since $\phi_j$ is an eigenvector in \eqref{eq:disEVP_intermediate} and $z_{\mathrm{nc}}$ solves
							\eqref{eq:def_zCR}, it follows 
							\begin{align*}
								\alpha_{j}	=(z_{\mathrm{nc}},\phi_{j})_{1+\delta}
								=\frac{1}{\mu_{j}}a_{\mathrm{pw}}(z_{\mathrm{nc}},\phi_{{j}})
								=\frac{\lambda}{\mu_{j}}(u,\phi_{{j}})_{1+\delta}.
							\end{align*}
							Hence $(u-z_{\mathrm{nc}},\phi_{j})_{1+\delta}=(\mu_{j}/\lambda-1)\alpha_{j}$.
							These values for the coefficients $\alpha_j$ 
							and the separation condition \ref{item:H2}  imply
							\begin{align*}
											\Vert v_{\mathrm{nc}}\Vert_{1+\delta}^2
											=\sum_{ {j} \ne {k}}\alpha_{j}^2 
											=\sum_{ {j} \ne {k}} \Big\vert \frac{\lambda}{\mu_{j}-\lambda}\Big\vert \vert\alpha_{{j}}\vert
															\vert (u-z_{\mathrm{nc}},\phi_{j})_{1+\delta} \vert
											\le M_6 \sum_{{j} \ne {k}}
															(u-z_{\mathrm{nc}},\alpha^\prime_{{j}}\phi_{j})_{1+\delta}
							\end{align*}
							for a sign in $\alpha_{j}^\prime\in\{\pm\alpha_{j}\}$ such that 
							$ \vert (u-z_{\mathrm{nc}},\alpha_{{j}}\phi_{j})_{1+\delta}\vert
							=(u-z_{\mathrm{nc}},\alpha_{j}^\prime\phi_{j})_{1+\delta}$ and with the abbreviation $\sum_{j\ne k}=\sum_{{{j}=1, {j} \ne {k}}}^N$.
							This and a Cauchy-Schwarz inequality show 
							\begin{align*}
											M_6^{-1}\Vert v_{\mathrm{nc}}\Vert_{1+\delta}^2
											&\le  \Big(u-z_{\mathrm{nc}},\sum_{ {j} \ne {k}}\alpha_{j}^\prime\phi_{j}\Big)_{1+\delta}
											\le \Vert u-z_{\mathrm{nc}}\Vert_{1+\delta} \Vert v_{\mathrm{nc}}\Vert_{1+\delta}.  
							\end{align*} 
							The norm equivalence  in \cref{rem:Normequivalence}  proves
							$\Vert v_{\mathrm{nc}}\Vert_{L^2(\Omega)}\le\Vert v_{\mathrm{nc}}\Vert_{1+\delta}
							\le \sqrt{2}M_6\Vert u-z_{\mathrm{nc}}\Vert_{L^2(\Omega)}$. 
							This and \eqref{eq:lem_u_zCR_L2} conclude the proof of \eqref{eq:u_zCR_L2} with $C_1:=\sqrt{2}(1+\sqrt{2} M_6)$. 
						\end{proofof}
						
							\begin{proofof}{\textit{\eqref{eq:u_zCR_energy}.}}
								Given the solution $z_{\mathrm{nc}}\in V(\mathcal{T})$ to \eqref{eq:def_zCR}, let ${w}\in V:=H^m_0(\Omega)$ solve 
								\begin{align}
									a({w},\varphi)=b(u-Jz_{\mathrm{nc}},\varphi)\quad \text{for all }\varphi \in V.  \label{eq:def_v}
								\end{align}
								Since $u-Jz_{\mathrm{nc}}\in V\subset L^2(\Omega)$, the elliptic regularity  \eqref{eq:def_sigma} guarantees 
								${w}\in H^{{m}+\sigma}(\Omega)$ and 
								\begin{align}
									\Vert {w}\Vert_{H^{{m}+\sigma}(\Omega)}\le C(\sigma) \Vert u-Jz_{\mathrm{nc}}\Vert_{L^2(\Omega)}.\label{eq:def_alpha}
								\end{align}
								The combination of \eqref{eq:def_alpha} with \cref{cor:InterpolationOperator}.\ref{item:cor_I_alpha} shows
								\begin{align}
									\vvvert {w}-I{w}\vvvert_{\mathrm{pw}}\le (h_{\max}/\pi)^{\sigma}\Vert {w}\Vert_{H^{{m}+\sigma}(\Omega)}
												\le C(\sigma)(h_{\max}/\pi)^{\sigma}\Vert u-Jz_{\mathrm{nc}}\Vert_{L^2(\Omega)}.\label{eq:v-Iv_alpha}
								\end{align}
								The test function 
								$\varphi=u-Jz_{\mathrm{nc}}$  in the auxiliary problem \eqref{eq:def_v} leads to 
								\begin{align}
									\Vert u-Jz_{\mathrm{nc}}\Vert_{L^2(\Omega)}^2
											&= a(u, {w}-JI{w})+a_{\mathrm{pw}}({w}, z_{\mathrm{nc}}-Jz_{\mathrm{nc}})
											+a(u, JI{w})-a_{\mathrm{pw}}({w},z_{\mathrm{nc}}). 
											\label{eq:lem_u_zCR_energy_sum}
								\end{align}
								Since \ref{item:J_rightInverse} asserts $I({w}-JI{w})=0$, 
								 \cref{cor:InterpolationOperator}.\ref{item:cor_I_a} and   a triangle inequality show 
								 \begin{align*}
									a(u,{w}-JI{w}) &= a_{\mathrm{pw}}(u, (1-I)({w}-JI{w}))
									\le \vvvert u-z_{\mathrm{nc}}\vvvert_{\mathrm{pw}}
										(\vvvert {w}-I{w}\vvvert_{\mathrm{pw}}+\vvvert I{w}-JI{w}\vvvert_{\mathrm{pw}}).
								\end{align*}
								Then \ref{item:J_approx} 
								implies that 
								$
									a(u,{w}-JI{w}) \le (1+{{M_2}}) \vvvert {w}-I{w}\vvvert_{\mathrm{pw}}\vvvert u-z_{\mathrm{nc}}\vvvert_{\mathrm{pw}}.
								$
								\cref{cor:ConformingCompanion}.\ref{item:cor_J_a_vh} 
								proves for the second term in the right-hand side of \eqref{eq:lem_u_zCR_energy_sum} that  
								\begin{align*}
									a_{\mathrm{pw}}({w}, z_{\mathrm{nc}}-Jz_{\mathrm{nc}}) 
										&\le M_2\vvvert {w}-I{w}\vvvert_{\mathrm{pw}}\vvvert u-z_{\mathrm{nc}}\vvvert_{\mathrm{pw}}.
								\end{align*}
								\cref{cor:InterpolationOperator}.\ref{item:cor_I_a} ensures 
								$a_{\mathrm{pw}}({w},z_{\mathrm{nc}})=a_{\mathrm{pw}}(I{w},z_{\mathrm{nc}})$.
								Since $(\lambda, u)$ is an eigenpair of \eqref{eq:contEVP} and $z_{\mathrm{nc}}$ satisfies \eqref{eq:def_zCR}, this implies 
								\begin{align*}
									a(u, JI{w})-a_{\mathrm{pw}}({w},z_{\mathrm{nc}})&=b(\lambda u, JI{w})-a_{\mathrm{pw}}(I{w},z_{\mathrm{nc}})
											=\lambda b(u, JI{w} -I{w}- \delta I{w}). 
								\end{align*}
								\cref{cor:ConformingCompanion}.\ref{item:cor_J_b_vh} 
								shows 
								$
									 b( u, JI{w} -I{w})
										\le {{M_2}}\kappa_{m}^{2} h_{\max}^{2{m}} \vvvert u-z_{\mathrm{nc}}\vvvert_{\mathrm{pw}} 
										\vvvert {w}-I{w}\vvvert_{\mathrm{pw}}.
								$	
								The discrete Friedrichs inequality 
								\begin{align}
								\Vert v_{\mathrm{nc}}\Vert_{L^2(\Omega)}\le C_{\mathrm{dF}}\vvvert v_{\mathrm{nc}}\vvvert_{\mathrm{pw}}
								\text{ for all } v_{\mathrm{nc}}\in V(\mathcal{T})\text{ with } C_{\mathrm{dF}}:=C_{F}(1+{M_2})+{M_2} h^m_{\max}
								\label{eq:disFriedrich}
								\end{align}
								is a direct consequence of the Friedrichs inequality 
								$\Vert v\Vert_{L^2(\Omega)}\le C_F\vvvert v\vvvert${ for any }$v\in V$ 
			 					and \ref{item:J_approx}; cf.\ \cite[Cor.~4.11]{CH17} for details in case $m=1$; the proof for $m=2$ is analogous. 								
								This, \ref{item:I_Pi0}, and the boundedness of $\Pi_0$ imply 
								$
									C_{\mathrm{dF}}^{-1}\Vert I {w}\Vert_{L^2(\Omega)}\le \vvvert Iw\vvvert_{\mathrm{pw}}=\Vert \Pi_0D^m w\Vert_{L^2(\Omega)}
																\le \Vert {w}\Vert_{H^{m}(\Omega)}.
								$	
							  	The Cauchy-Schwarz inequality leads to  
								\begin{align*}
									- b(\lambda u, \delta I{w})\le \Vert \delta \lambda u\Vert_{L^2(\Omega)} \Vert I{w}\Vert_{L^2(\Omega)}
									\le  C_{\mathrm{dF}}\Vert \delta\lambda u\Vert_{L^2(\Omega)} \Vert {w}\Vert_{H^{{m}+\sigma}(\Omega)}.
								\end{align*}
								This bounds the last term on the right-hand side of \eqref{eq:lem_u_zCR_energy_sum}. The substitution in 
								\eqref{eq:lem_u_zCR_energy_sum} and $\lambda\kappa_m^2 h_{\max}^{2m}\le 1/2$ result in 
								\begin{align*}
									\Vert u-Jz_{\mathrm{nc}}\Vert_{L^2(\Omega)}^2
									\le & (1+5M_2/2)
										\vvvert {w}-I{w}\vvvert_{\mathrm{pw}}\vvvert u-z_{\mathrm{nc}}\vvvert_{\mathrm{pw}}
										+  C_{\mathrm{dF}}\Vert \delta\lambda u\Vert_{L^2(\Omega)} 
										\Vert {w}\Vert_{H^{{m}+\sigma}(\Omega)}. 
								\end{align*}
								This and \eqref{eq:def_alpha}--\eqref{eq:v-Iv_alpha} imply 
								$$
									C(\sigma)^{-1}\Vert u-Jz_{\mathrm{nc}}\Vert_{L^2(\Omega)}
									\le (h_{\max}/\pi)^{\sigma}(1+5M_2/2)
									\vvvert u-z_{\mathrm{nc}}\vvvert_{\mathrm{pw}}
															+C_{\mathrm{dF}}\Vert \delta\lambda u\Vert_{L^2(\Omega)}. 
								$$
								\cref{cor:ConformingCompanion}.\ref{item:cor_J_L2_vh} implies 
								$
									\Vert z_{\mathrm{nc}}-Jz_{\mathrm{nc}}\Vert_{L^2(\Omega)}\le {{M_2}}\kappa_{m} h_{\max}^{m}
									\vvvert u-z_{\mathrm{nc}}\vvvert_{\mathrm{pw}}.
								$
								This, $0<\sigma\le 1\le m$, $h_{\max}<1$,  
								 and a triangle inequality show
								\begin{align*}
								\Vert u-z_{\mathrm{nc}}\Vert_{L^2(\Omega)}
									&\le \Vert Jz_{\mathrm{nc}}-z_{\mathrm{nc}}\Vert_{L^2(\Omega)}+\Vert u-Jz_{\mathrm{nc}}\Vert_{L^2(\Omega)}
									\le C_2 \big(h_{\max}^{\sigma }\vvvert u-z_{\mathrm{nc}}\vvvert_{\mathrm{pw}}
									+\Vert \delta \lambda u\Vert_{L^2(\Omega)}\big)
								\end{align*}
								with the constant 
								$
									C_2:=\max\big\{C(\sigma)(1+5M_2/2)/\pi^{\sigma}+{{M_2}} \kappa_{m},C(\sigma)C_{\mathrm{dF}}\big\}.							
								$ 
							\end{proofof} 
			\subsection{Proof of \eqref{eq:u_Iu_Energy} and \eqref{eq:ControlOfDeltaUh} for the energy error control }\label{sec:IntermediateContSource}
					Recall $\delta$ from \cref{rem:delta} and 
					that  $z_{\mathrm{nc}}\in V({\mathcal{T}}) $ solves  \eqref{eq:def_zCR}.	
					 	\begin{proofof}\textit{\eqref{eq:u_Iu_Energy}.}		
					 		Elementary algebra  with $a_{\mathrm{pw}}(z_{\mathrm{nc}},u)=a_{\mathrm{pw}}(z_{\mathrm{nc}},Iu)$ 
					 		from \cref{cor:InterpolationOperator}.\ref{item:cor_I_a}  shows 		
					 		\begin{align}
					 			\vvvert u-z_{\mathrm{nc}}\vvvert^2_{\mathrm{pw}}
					 	 			=& a (u, u-JIu)+a_{\mathrm{pw}}(u, Jz_{\mathrm{nc}}-z_{\mathrm{nc}})+ a(u,JIu-Jz_{\mathrm{nc}})
					 	 			+a_{\mathrm{pw}}(z_{\mathrm{nc}},z_{\mathrm{nc}}- Iu).\label{eq:lem_u_Iu_Energy_sum} 
					 		\end{align}
					 		\cref{cor:InterpolationOperator}.\ref{item:cor_I_a} and 
					 		\cref{cor:ConformingCompanion}.\ref{item:cor_J_a_vh}  
					 		control the terms in the decomposition 
					 		 \begin{align*}
							 	 a (u, u-JIu)+ a_{\mathrm{pw}}(u, Jz_{\mathrm{nc}}-z_{\mathrm{nc}})
							 	 &=a_{\mathrm{pw}}(u,u-Iu)+a_{\mathrm{pw}}(u,Iu-JIu)+ a_{\mathrm{pw}}(u, Jz_{\mathrm{nc}}-z_{\mathrm{nc}})\\
							 	 &\le (1+{{M_2}}) \vvvert u-Iu\vvvert_{\mathrm{pw}}^2+{{M_2}} \vvvert u-Iu\vvvert_{\mathrm{pw}}
							 	 \vvvert u-z_{\mathrm{nc}}\vvvert_{\mathrm{pw}}.
							 \end{align*}
					 		Recall that $(\lambda,u)$ is an eigenpair of \eqref{eq:contEVP} and   
					 	    $z_{\mathrm{nc}}$ satisfies \eqref{eq:def_zCR}. Consequently,
					 	 	\begin{align*}
					 	 		 a(u,JIu-Jz_{\mathrm{nc}})+a_{\mathrm{pw}}(z_{\mathrm{nc}}, z_{\mathrm{nc}}-Iu)
					 	 			&= b(\lambda u, JIu-Jz_{\mathrm{nc}}+ (1+\delta)(z_{\mathrm{nc}}-Iu))\\
					 	 			&=\lambda b( u, (J-1)(Iu-z_{\mathrm{nc}}))+\lambda b(\delta u, z_{\mathrm{nc}}-Iu).
					 		 \end{align*}
							 \cref{cor:ConformingCompanion}.\ref{item:cor_J_b_vh}, $\kappa_m^2\lambda h_{\max}^{2m}\le 1/2$, and a triangle inequality show
							 \begin{align*}
							 	\lambda b(u,(J-1)(Iu-z_{\mathrm{nc}}))
							 		\le M_2/2\,\vvvert u-Iu\vvvert_{\mathrm{pw}}(\vvvert u-Iu\vvvert_{\mathrm{pw}}
							 		+\vvvert u-z_{\mathrm{nc}}\vvvert_{\mathrm{pw}}) .
							 \end{align*}
							 Since Cauchy-Schwarz and triangle inequalities show 
							 $
							 	b(\delta \lambda u, z_{\mathrm{nc}}-Iu)
							 			\le \Vert\delta \lambda  u \Vert_{L^2(\Omega)}
							 			(\Vert u-z_{\mathrm{nc}} \Vert_{L^2(\Omega)}+\Vert u-Iu\Vert_{L^2(\Omega)}),$
							\ref{item:I_kappa} provides the first and \eqref{eq:u_zCR_energy} the second estimate in 
							 \begin{align*}
							 	b(\delta \lambda  u, z_{\mathrm{nc}}-Iu)
							 	\le& \Vert\delta \lambda  u \Vert_{L^2(\Omega)} (\Vert u-z_{\mathrm{nc}} \Vert_{L^2(\Omega)}
							 				+\kappa_m h_{\max}^{m}\vvvert u-Iu\vvvert_{\mathrm{pw}})
							 	\\
							 	\le&\Vert\delta \lambda  u \Vert_{L^2(\Omega)} 
							 		(C_2 h_{\max}^{\sigma}\vvvert u-z_{\mathrm{nc}}\vvvert_{\mathrm{pw}}+C_2\Vert\delta \lambda  u \Vert_{L^2(\Omega)}
							 				+\kappa_m h_{\max}^{m}\vvvert u-Iu\vvvert_{\mathrm{pw}}).
							 \end{align*}
							Since  $h_{\max}^{m}\vvvert u-Iu\vvvert_{\mathrm{pw}}\le h_{\max}^{\sigma }\vvvert u-z_{\mathrm{nc}}\vvvert_{\mathrm{pw}}$ 
							from  \cref{cor:InterpolationOperator}.\ref{item:cor_I_orthogonality}, a weighted Young inequality shows 
							 $
							 	b(\delta\lambda   u, z_{\mathrm{nc}}-Iu)
							 	\le ((C_2+\kappa_m)^2h_{\max}^{2\sigma }+C_2)\Vert \delta \lambda u\Vert_{L^2(\Omega)}^2
							 	+\vvvert u-z_{\mathrm{nc}}\vvvert_{\mathrm{pw}}^2/4.
							 $
							 The substitution of the displayed estimates in \eqref{eq:lem_u_Iu_Energy_sum} shows 
							 \begin{align*}
							 	\vvvert u-z_{\mathrm{nc}}\vvvert^2_{\mathrm{pw}}
							 		\le & (1+3M_2/2) 
							 		\vvvert u-Iu\vvvert_{\mathrm{pw}}^2+
							 		3 M_2/2\,
							 		\vvvert u-Iu\vvvert_{\mathrm{pw}}\vvvert u-z_{\mathrm{nc}}\vvvert_{\mathrm{pw}} 
							 	\\&+	((C_2+\kappa_m)^2h_{\max}^{2\sigma }+C_2)\Vert \delta \lambda u\Vert_{L^2(\Omega)}^2
							 	+\vvvert u-z_{\mathrm{nc}}\vvvert_{\mathrm{pw}}^2/4.
							 \end{align*}		
							This and 
							$3M_2/2\vvvert u-Iu\vvvert_{\mathrm{pw}}\vvvert u-z_{\mathrm{nc}}\vvvert_{\mathrm{pw}} 	
								\le 9M_2^2/4	\vvvert u-Iu\vvvert_{\mathrm{pw}}^2+	 \vvvert u-z_{\mathrm{nc}}\vvvert_{\mathrm{pw}} ^2/4$ 			
				 			conclude the proof of \eqref{eq:u_Iu_Energy}  with 
							 		$C_3^2:=2 \max\{1+3{{M_2}}/2+9{{M_2}}^2/4,
							 				(C_2+\kappa_m)^2h_{\max}^{2\sigma }+C_2\}$. 
					 	\end{proofof}

					 \begin{proofof}\textit{\eqref{eq:ControlOfDeltaUh}.} 
					 	The proof of the efficiency estimate of the volume residual 
					 	is based on Verf\"uhrt's bubble-function methodology \cite{Verf2013}, comparable to 
					 	\cite[Thm.~2]{BdVNS07}, \cite[Prop.~3.1]{Gal15_cluster}, and given here for completeness. 
					 	Let $\varphi_z\in S^1(\mathcal{T}):=P_1(\mathcal{T})\cap C(\Omega)$ denote 
					 	the nodal basis function associated with the vertex $z\in\mathcal{V}$. 
					 	For any $T\in\mathcal{T}$, 
					 	let $b_T:=4^{4m}\prod_{z\in\mathcal{V}(T)}\varphi_z^m\in P_{4m}(T)\cap W^{m,\infty}_0(T)\subset V$ 
					 	denote the volume-bubble-function with $\textup{supp}(b_T)=T$ and $\Vert b_T\Vert_{\infty}=1$.
					 	An inverse estimate  $\Vert p\Vert_{L^2(T)}\le c_b \Vert p\Vert_{b_T}$ for any polynomial $p\in P_m(T)$ leads to 
					 	 \begin{align}
					 	 	c_b^{-2}\Vert u_{\mathrm{nc}}\Vert_{L^2(T)}^2
					 	 	\le  \Vert  u_{\mathrm{nc}}\Vert_{b_T}^2 
					 	 	= (u_{\mathrm{nc}},u)_{b_T}-( u_{\mathrm{nc}},u-u_{\mathrm{nc}})_{b_T}.\label{eq:lem_ControlOfDeltaUh}
					 	 \end{align}
					 	The Cauchy-Schwarz inequality and $\Vert b_T\Vert_{\infty}=1$ show 
					 	$
					 		( u_{\mathrm{nc}},u-u_{\mathrm{nc}})_{b_T}
					 			\le \Vert u_{\mathrm{nc}}\Vert_{L^2(T)}\Vert u-u_{\mathrm{nc}}\Vert_{L^2(T)}.
					 	$
					 	An integration by parts proves 
					 	 $
					 	 	\int_T D^{m}(b_T u_{\mathrm{nc}})\,\textup{d}x=0
					 	 $ since $b_T u_{\mathrm{nc}}\in H^{m}_0(T)$, i.e.,
					 	 $D^{m}b_Tu_{\mathrm{nc}}$ is $L^2$-orthogonal to $P_0(T)$.
					 	 Recall that $(\lambda,u)$ is an eigenpair of \eqref{eq:contEVP} and the support of $b_Tu_{\mathrm{nc}}$ is $T$. 
					 	 This, \ref{item:I_Pi0}, and the Cauchy-Schwarz inequality result in  
					 	\begin{align*}
					 		\lambda b(u,b_{T}u_{\mathrm{nc}})
					 			&=a_{\mathrm{pw}}(u,b_{T}u_{\mathrm{nc}})=(D^{m} u, D^{m} (b_T u_{\mathrm{nc}}))_{L^2(T)}
					 			\le \vert u-Iu\vert_{H^m(T)}\vert b_T u_{\mathrm{nc}}\vert_{H^m(T)}.
					 	\end{align*}
						An inverse estimate for polynomials in $ P_{5m}(T)$ with the constant 
						$c_{\mathrm{inv}}$ and the boundedness of $b_T$
						show $\lambda b(u,b_{T}u_{\mathrm{nc}}) \le c_{\mathrm{inv}} h_T^{-m} \vert u-Iu\vert_{H^m(T)}\Vert u_{\mathrm{nc}}\Vert_{L^2(T)}$. 	
						This  provides $c_b^{-2}h_T^{m}\Vert u_{\mathrm{nc}}\Vert_{L^2(T)} 
						\le h_T^{m} \Vert u-u_{\mathrm{nc}}\Vert_{L^2(T)}+c_{\mathrm{inv}}\lambda^{-1}\vert u-Iu\vert_{H^m(T)}$ for all $T\in\mathcal{T}$ 
						in \eqref{eq:lem_ControlOfDeltaUh}. 
						The sum 
						 over all $T\in \mathcal{T}$ concludes the proof of \eqref{eq:ControlOfDeltaUh} with $C_4=c_b^2\max\{1,c_{\mathrm{inv}}\}$.
					 \end{proofof} 
					\subsection{Proof of \cref{thm:BoundInterpolationError}.\ref{item:BoundInterpolationError}}\label{sec:BestApproxFinish}
					\begin{proofof}\textit{\eqref{eq:L2errorEnergyError} for $\varepsilon_4>0$.} 
							Recall $c_1:=2\lambda^2\kappa_m^2 C_1C_2(1+C_3)$ and  \eqref{eq:proof_L2error_collection} as a result of 
							\eqref{eq:u_zCR_L2}--\eqref{eq:u_Iu_Energy}. 
							A triangle inequality, \cref{rem:delta}, and \eqref{eq:proof_L2error_collection} show 
								\begin{align*}
									\Vert \delta \lambda u\Vert_{L^2(\Omega)}
									&\le 2\lambda^2\kappa_m^2 h_{\max}^{2m}\Vert u-u_{\mathrm{nc}}\Vert_{L^2(\Omega)}+
									\Vert \delta \lambda u_{\mathrm{nc}}\Vert_{L^2(\Omega)}
									\notag\\&\le \frac{c_1C_3h_{\max}^{2m}}{1+C_3}h_{\max}^{\sigma} \vvvert u-Iu\vvvert_{\mathrm{pw}}
									+c_1h_{\max}^{2m}\Vert \delta \lambda u\Vert_{L^2(\Omega)}
									+\Vert \delta \lambda u_{\mathrm{nc}}\Vert_{L^2(\Omega)}.
								\end{align*}
							Since $0<\varepsilon_3:=\min\{\varepsilon_2,(2c_1)^{-1/2m}\}$ ensures $c_1h_{\max}^{2m}\le 1/2$ for all $\mathcal{T}\in\mathbb{T}(\varepsilon_3)$,  
							the previous displayed estimate reads 
							$\Vert \delta \lambda u\Vert_{L^2(\Omega)}
									\le \frac{c_1C_3h_{\max}^{2m}}{1+C_3}h_{\max}^{\sigma} \vvvert u-Iu\vvvert_{\mathrm{pw}}
									+\Vert \delta \lambda u\Vert_{L^2(\Omega)}/2
									+\Vert \delta \lambda u_{\mathrm{nc}}\Vert_{L^2(\Omega)}.$  
							This implies \eqref{eq:delta_uh_2}.  
							The bound \eqref{eq:delta_uh_2} for $ \Vert \delta \lambda u\Vert_{L^2(\Omega)}$ recasts \eqref{eq:proof_L2error_collection} as 
							\begin{align*}
								C_1^{-1}C_2^{-1}\Vert u-u_{\mathrm{nc}}\Vert_{L^2(\Omega)}
									\le 2C_3 h_{\max}^\sigma \vvvert u-Iu\vvvert_{\mathrm{pw}}
									+2(1+C_3)\lambda \Vert \delta u_{\mathrm{nc}}\Vert_{L^2(\Omega)}.	
							\end{align*}							 
							\cref{rem:delta} and \eqref{eq:ControlOfDeltaUh} control the last term in 
							\begin{align*}
								(2\kappa_m^2 C_4)^{-1}\Vert \delta u_{\mathrm{nc}}\Vert_{L^2(\Omega)}
								\le C_4^{-1}\lambda h_{\max}^m \Vert h_{\mathcal{T}}^{m} u_{\mathrm{nc}}\Vert_{L^2(\Omega)}
								\le \lambda h_{\max}^{2m}\Vert u-u_{\mathrm{nc}}\Vert_{L^2(\Omega)}
										+h_{\max}^m\vvvert u-Iu\vvvert_{\mathrm{pw}}.
							\end{align*}
							 Recall that $c_2:=4\lambda^2\kappa_m^2 C_1C_2(1+C_3)C_4$ and  
							 $\varepsilon_4:=\min\{\varepsilon_3, (2c_2)^{-1/2m}\}<1$ ensure $c_2 h_{\max}^{2m}\le 1/2$. 
							 Hence the last term in \eqref{eq:towards_L2errorEnergyError} is 
							 $\le\Vert u-u_{\mathrm{nc}}\Vert_{L^2(\Omega)}	/2$  and can be absorbed.  
							This concludes the proof 	
							of \eqref{eq:L2errorEnergyError}{ with }$C_5:=2{C_1C_2}
									\big(2C_3+4\kappa_m^2 \lambda (1+C_3)C_4\big).	
							$\phantom{x}
					\end{proofof}	
					Recall $0<\varepsilon_5\le\varepsilon_4$ such that $b(u,u_{\mathrm{nc}})>0$ for any $\mathcal{T}\in\mathbb{T}(\varepsilon_5)$. 
					\begin{proofof}\textit{\cref{thm:BoundInterpolationError}.\ref{item:BoundInterpolationError} for $\varepsilon_5$.} 
							Recall $\lambda_h\le \lambda$ and $\Vert u\Vert_{L^2(\Omega)}=\Vert u_{\mathrm{nc}}\Vert_{L^2(\Omega)}=1$. 
							The continuous eigenpair $(\lambda,u)$ in \eqref{eq:contEVP} satisfies $\lambda=\vvvert u\vvvert^2$.  
							The discrete eigenpair $(\lambda_h,u_{\mathrm{nc}})$ solves \eqref{eq:disEVP} and so 
							$\lambda_h=\vvvert u_{\mathrm{nc}}\vvvert_{\mathrm{pw}}^2/\Vert u_{\mathrm{nc}}\Vert_{1+\delta}^2$ 
							with $\Vert u_{\mathrm{nc}}\Vert_{L^2(\Omega)}=1$. 
							Then 
							\begin{align*}
								\vvvert u-u_{\mathrm{nc}}\vvvert_{\mathrm{pw}}^2
								=\lambda-2a_{\mathrm{pw}}(u,u_{\mathrm{nc}})+\lambda_h \Vert u_{\mathrm{nc}}\Vert_{1+\delta}^2
								\quad\text{and}\quad 
								\Vert u_{\mathrm{nc}}\Vert^2_{1+\delta}-1
								=b(\delta u_{\mathrm{nc}},u_{\mathrm{nc}})=\Vert u_{\mathrm{nc}}\Vert_{\delta}^2.
							\end{align*}				
							This and elementary algebra show for 
							the left-hand side of 
							\cref{thm:BoundInterpolationError}.\ref{item:BoundInterpolationError} that 
							\begin{align*}
								\textup{LHS}:=\lambda -\lambda_h+\vvvert u-u_{\mathrm{nc}}\vvvert_{\mathrm{pw}}^2
												+\Vert u_{\mathrm{nc}}\Vert_{\delta}^2
									&= 2\lambda -2 a_{\mathrm{pw}}(u,u_{\mathrm{nc}}) +(1+\lambda_h)
									\Vert u_{\mathrm{nc}}\Vert_{\delta}^2.
							\end{align*}
							 Since $u$ is the eigenfunction in \eqref{eq:contEVP} and $2b(u,u-u_{\mathrm{nc}})=\Vert u-u_{\mathrm{nc}}\Vert_{L^2(\Omega)}^2$ 
							 from $\Vert u_{\mathrm{nc}}\Vert_{L^2(\Omega)}=1=\Vert u\Vert_{L^2(\Omega)}$, 
							it follows  
							\begin{align*}
								\lambda 
								=\lambda b(u,u_{\mathrm{nc}})+\lambda b(u,u-u_{\mathrm{nc}})
								= \lambda b(u,u_{\mathrm{nc}}-Ju_{\mathrm{nc}})+a_{\mathrm{pw}}(u, Ju_{\mathrm{nc}})
								+\lambda/2\ \Vert u-u_{\mathrm{nc}}\Vert^2_{L^2(\Omega)}. 
							\end{align*}
							The combination of the last two displayed identities eventually leads to 
							\begin{align}
								\textup{LHS}
									=&(1+\lambda_h)\Vert u_{\mathrm{nc}}\Vert_{\delta}^2+  \lambda \Vert u-u_{\mathrm{nc}}\Vert^2_{L^2(\Omega)}
									+ 2\lambda b(u,u_{\mathrm{nc}}-Ju_{\mathrm{nc}})
									 +2a_{\mathrm{pw}}(u, Ju_{\mathrm{nc}}-u_{\mathrm{nc}}).
									 \label{eq:BIE_inbetween}
							\end{align}
							Recall $2\lambda \kappa_m^2h_{\max}^{2m}\le 1$. 
							The combination of \cref{rem:delta} and  \eqref{eq:ControlOfDeltaUh} implies that  
							\begin{align*}
								\Vert u_{\mathrm{nc}}\Vert_{\delta}
								&\le 	\sqrt{2}\kappa_m \lambda^{1/2}\Vert h_{\mathcal{T}}^m u\Vert_{L^2(\Omega)}
								\le  C_4\Vert u -u_{\mathrm{nc}}\Vert_{L^2(\Omega)}+\sqrt{2/\lambda}\kappa_m C_4\vvvert u-Iu\vvvert_{\mathrm{pw}}
							\end{align*}
						 	and \eqref{eq:L2errorEnergyError} controls $ \Vert u-u_{\mathrm{nc}}\Vert_{L^2(\Omega)}
								\le C_5h_{\max}^{\sigma}\vvvert u-Iu\vvvert_{\mathrm{pw}}.$ 
							\cref{cor:ConformingCompanion}.\ref{item:cor_J_b_vh} asserts 
							$
								2\lambda b(u,u_{\mathrm{nc}}-Ju_{\mathrm{nc}})
								\le M_2
								\vvvert u-Iu \vvvert_{\mathrm{pw}}\vvvert u-u_{\mathrm{nc}} \vvvert_{\mathrm{pw}}.
							$ 
							\cref{cor:ConformingCompanion}.\ref{item:cor_J_a_vh} shows 
							$
								a_{\mathrm{pw}}(u, Ju_{\mathrm{nc}}-u_{\mathrm{nc}})
								\le {{M_2}} \vvvert u-Iu\vvvert_{\mathrm{pw}} \vvvert u-u_{\mathrm{nc}}\vvvert_{\mathrm{pw}}.
							$
							Since $\lambda_h\le \lambda$, these estimates lead in \eqref{eq:BIE_inbetween} to 
							\begin{align*}
								\textup{LHS}\hspace{-0.2em}			
									\le &
										\big(\hspace{-0.1em}(1\hspace{-0.1em}	+\hspace{-0.1em}	\lambda) C_4^2 (C_5h_{\max}^{\sigma}+\sqrt{2/\lambda}\kappa_m)^2
										\hspace{-0.1em}	+\hspace{-0.1em}		 \lambda C_5^2h_{\max}^{2\sigma}											\big)
									\vvvert u-Iu\vvvert^2
									\hspace{-0.1em}	+\hspace{-0.1em}	3M_2
									\vvvert u-Iu \vvvert_{\mathrm{pw}}\vvvert u_{\mathrm{nc}}-u \vvvert_{\mathrm{pw}}.
							\end{align*}
							A weighted Young inequality and the absorption of  $\vvvert u_{\mathrm{nc}}-u \vvvert_{\mathrm{pw}}^2/2$  
							conclude the proof of \cref{thm:BoundInterpolationError}.\ref{item:BoundInterpolationError} 
							with $C_0:=\max\{C_5^2, 2((1+\lambda) C_4^2 (C_5h_{\max}^{\sigma}+\sqrt{2/\lambda}\kappa_m)^2+ \lambda C_5^2h_{\max}^{2\sigma})
							+9M_2^2\}$.\phantom{x}
						\end{proofof}
\section{Optimal convergence rates}\label{sec:aposteriori}
		This section verifies some general axioms of adaptivity \cite{CFPP14,CR16} sufficient for optimal rates for 
		\namecref{alg:AFEM4EVP} and prepares the conclusion of the proof of  \cref{thm:Optimality4GLB} in \cref{sec:optimality4evp}.
		  			
			\subsection{Stability and reduction}\label{sec:A1-A2}
				The $2$-level notation of  \cref{tab:2level} concerns one coarse triangulation $\mathcal{T}\in \mathbb{T}$ and 
				one fine triangulation $\widehat{\mathcal{T}}\in\mathbb{T}(\mathcal{T})$. 
				Let $(\lambda,u)\in \mathbb{R}^+\times V$  denote  the ${k}$-th continuous eigenpair 
					of \eqref{eq:contEVP} with a \emph{simple} eigenvalue $\lambda\equiv\lambda_k$ and the normalization 
					$\Vert u\Vert_{L^2(\Omega)}=1$. 
					Choose $\varepsilon_5>0$  as in  \cref{thm:BoundInterpolationError}, 
					suppose $\mathcal{T}\in\mathbb{T}(\varepsilon_5)$, 
					and let $\widehat{\mathcal{T}}\in\mathbb{T}(\mathcal{T})$ be any admissible refinement of $\mathcal{T}$. 
				\begin{definition}[$2$-level notation]\label{def:2-level} 
					Let $(\lambda_h,\boldsymbol{u_h})\in \mathbb{R}^+\times \boldsymbol{V_h}$ 
					(resp.\ $(\widehat{\lambda}_h,\boldsymbol{\widehat{u}_h})\in \mathbb{R}^+\times \boldsymbol{\widehat{V}_h}$) with 
					$\boldsymbol{u_h}=(u_{\mathrm{pw}},u_{\mathrm{nc}})\in \boldsymbol{V_h}:=P_m(\mathcal{T})\times V({\mathcal{T}})$
					(resp.\ $\boldsymbol{\widehat{u}_h}=(\widehat{u}_{\mathrm{pw}},\widehat{u}_{\mathrm{nc}})
						\in \boldsymbol{\widehat{V}_h}:=P_{m}(\widehat{\mathcal{T}})\times V(\widehat{\mathcal{T}})$)				
					 denote the ${k}$-th discrete eigenpair  
					of \eqref{eq:dis_EVP_alt} with the simple algebraic eigenvalue $\lambda_h\equiv\lambda_h({k})$ 
					(resp.\ $\widehat{\lambda}_h\equiv \widehat{\lambda}_h(k)$), 
					the normalization $\Vert u_{\mathrm{nc}}\Vert_{L^2(\Omega)}=1$ (resp.\ $\Vert \widehat{u}_{\mathrm{nc}}\Vert_{L^2(\Omega)}=1$),  
					and the sign convention $b(u,u_{\mathrm{nc}})> 0$ (resp.\ $b(u,\widehat{u}_{\mathrm{nc}})> 0$). 
					Recall $\widehat{h}_{\max}:=\max_{T\in\widehat{\mathcal{T}}}h_T\le h_{\max}:=\max_{T\in\mathcal{T}}h_T\le\varepsilon_5$, 
					$\lambda_h,\,\widehat{\lambda}_h\le \lambda$ from \cref{thm:BoundInterpolationError}.\ref{item:lambdahGLB}, and  
					${\delta}$ from \cref{rem:delta} with its analogue
					$\widehat{\delta}:=(1-\widehat{\lambda}_h\kappa_{m}^2h_{\widehat{\mathcal{T}}}^{2{m}})^{-1}-1
					\in P_0(\widehat{\mathcal{T}})$ on the fine level. The constant 
					$C_{\delta}:=2\lambda\kappa_m^2$ satisfies $ \delta\le C_\delta h_{\mathcal{T}}^{2{m}}$ 
					and $ \widehat{\delta}\le C_\delta h_{\widehat{\mathcal{T}}}^{2{m}}$.
					Recall the estimator $\eta^2(T)$ for any $T\in\mathcal{T}$ from \eqref{eq:def_eta} and define $\widehat{\eta}^2(T)$, 
					for any $T\in\widehat{\mathcal{T}}$ with volume $|T|$ and the set of faces $\widehat{\mathcal{F}}(T)$, by 
					\begin{align}
							\widehat{\eta}^2(T) := |T|^{2{m}/3}\Vert \widehat{\lambda}_h \widehat{u}_{\mathrm{nc}} \Vert^2_{L^2(T)}
												+|T|^{1/3}\sum_{F\in\widehat{\mathcal{F}}(T)}\Vert 
												[{D}^{m}_{\mathrm{pw}} \widehat{u}_{\mathrm{nc}}]_F\times \nu_F\Vert^2_{L^2(F)}. 	
												\label{eq:def_etahat}
					\end{align}
				\end{definition}
				\begin{table}
					\begin{center}
					\begin{tabular}{|l|l|}
							\hline 
							$(\lambda_h,\boldsymbol{u_h})\in \mathbb{R}^+\times \boldsymbol{V_h}$ $k$-th eigenpair in \eqref{eq:dis_EVP_alt}					
								\phantom{\huge{I}}
							& 
							$(\widehat{\lambda}_h,\boldsymbol{\widehat{u}_h})\in \mathbb{R}^+\times \boldsymbol{\widehat{V}_h}$  
							$k$-th eigenpair in \eqref{eq:dis_EVP_alt}
								\phantom{\huge{I}}
							\\
							with $\boldsymbol{u_h}=(u_{\mathrm{pw}},u_{\mathrm{nc}})\in P_m(\mathcal{T})\times V({\mathcal{T}})$
							& 
							with 
							$\boldsymbol{\widehat{u}_h}=(\widehat{u}_{\mathrm{pw}},\widehat{u}_{\mathrm{nc}})
							\in P_m(\widehat{\mathcal{T}})\times V(\widehat{\mathcal{T}})$
							\\
							$\Vert u_{\mathrm{nc}}\Vert_{L^2(\Omega)}=1$, $b(u,u_{\mathrm{nc}})> 0$, $\lambda_h\le\lambda$
							& 
							$\Vert \widehat{u}_{\mathrm{nc}}\Vert_{L^2(\Omega)}=1$, $b(u,\widehat{u}_{\mathrm{nc}})> 0$, $\widehat{\lambda}_h\le\lambda$
							\\
							$h_{\max}:=\max_{T\in\mathcal{T}}h_T$
							& 
							$\widehat{h}_{\max}:=\max_{T\in\widehat{\mathcal{T}}}h_T$
							\\
							${\delta}:=(1-{\lambda}_h\kappa_{m}^2h_{{\mathcal{T}}}^{2{m}})^{-1}-1 \le C_\delta h_{{\mathcal{T}}}^{2{m}}\le 1$
							& 
							$\widehat{\delta}:=(1-\widehat{\lambda}_h\kappa_{m}^2h_{\widehat{\mathcal{T}}}^{2{m}})^{-1}-1 
							\le C_\delta h_{\widehat{\mathcal{T}}}^{2{m}}\le 1$
							\\
							$\eta^2(T)$ from \eqref{eq:def_eta} for $T\in\mathcal{T}$ 
							& 
							$\widehat{\eta}^2(T)$ from \eqref{eq:def_etahat}  
							for $T\in\widehat{\mathcal{T}}$ 
							\\
							$\eta^2(\mathcal{M}):= \sum_{T\in\mathcal{M}}\eta^2(T)$ for $\mathcal{M}\subseteq\mathcal{T}$  
							&
							$\widehat{\eta}^2(\widehat{\mathcal{M}}):= \sum_{T\in\widehat{\mathcal{M}}}\widehat{\eta}^2(T)$ 
							for $\widehat{\mathcal{M}}\subseteq\widehat{\mathcal{T}}$  \\[1ex]
					\hline 
					\end{tabular}
					\caption{$2$-level notation with respect to $\mathcal{T}\in\mathbb{T}(\varepsilon)$ (left) and an admissible refinement 
					$\widehat{\mathcal{T}}\in\mathbb{T}(\mathcal{T})$ (right)}\label{tab:2level}				
					\end{center}
				\end{table}
				The sum conventions $\eta^2(\mathcal{M}):= \sum_{T\in\mathcal{M}}\eta^2(T)$ for $\mathcal{M}\subset\mathcal{T}$ 
				and $\widehat{\eta}^2(\widehat{\mathcal{M}}):= \sum_{T\in\widehat{\mathcal{M}}}\widehat{\eta}^2(T)$  for 
				$\widehat{\mathcal{M}}\subset\widehat{\mathcal{T}}$ from \cref{tab:2level} apply throughout this section. 
				Abbreviate the distance function 
				\begin{align}
					\delta^2(\mathcal{T},\widehat{\mathcal{T}})
							:=\Vert \lambda_h u_{\mathrm{nc}}-\widehat{\lambda}_h\widehat{u}_{\mathrm{nc}}\Vert_{L^2(\Omega)}^2
							+ \vvvert u_{\mathrm{nc}}-\widehat{u}_{\mathrm{nc}}\vvvert_{\mathrm{pw}}^2. \label{eq:def_dist_delta}
				\end{align}
				\begin{theorem}[stability and reduction]\label{thm:A1_A2}
					There exist $\Lambda_1,\allowbreak \Lambda_2~>~0$, 
						such that, for any $\mathcal{T}$ and $\widehat{\mathcal{T}}$ from \cref{def:2-level}, the following holds  
						\begin{enumerate}[label={\textup{({A${\arabic*}$})}}, leftmargin=3.5em]
							\item\label{A1Stability}Stability. 
									$\displaystyle
						 						\big\lvert{\eta(\mathcal{T}\cap \widehat{\mathcal T})
						 						-{\widehat{\eta}}( \mathcal{T}\cap\widehat{\mathcal T})}\big\rvert 
						 						\leq \Lambda_1 \delta(\mathcal{T},\widehat{\mathcal T}),
										$	
							\item\label{A2Reduction}Reduction. 
									$\displaystyle
									\widehat{\eta}(\widehat{\mathcal T}\setminus\mathcal{T}) 
											\leq 2^{-1/12} \eta(\mathcal{T}\setminus\widehat{\mathcal T}) 
											+ \Lambda_2 \delta(\mathcal{T},\widehat{\mathcal T}).
									$
						\end{enumerate}
				\end{theorem}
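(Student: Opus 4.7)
The plan is to verify \ref{A1Stability} and \ref{A2Reduction} via the standard axioms-of-adaptivity recipe \cite{CFPP14,CR16}, exploiting that $\eta^2(T)$ is an $\ell^2$-sum of a volume contribution $|T|^{m/3}\Vert\lambda_h u_{\mathrm{nc}}\Vert_{L^2(T)}$ and face-jump contributions $|T|^{1/6}\Vert[D^m_{\mathrm{pw}}u_{\mathrm{nc}}]_F\times\nu_F\Vert_{L^2(F)}$, and similarly for $\widehat{\eta}^2(T)$.

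For stability \ref{A1Stability}, on each $T\in\mathcal{T}\cap\widehat{\mathcal{T}}$ the reverse triangle inequality in $\ell^2$ yields
\begin{align*}
\lvert\eta(T)-\widehat{\eta}(T)\rvert^2
&\le |T|^{2m/3}\Vert\lambda_h u_{\mathrm{nc}}-\widehat{\lambda}_h\widehat{u}_{\mathrm{nc}}\Vert_{L^2(T)}^2 \\
&\quad+|T|^{1/3}\sum_{F\in\mathcal{F}(T)}\Vert[D^m_{\mathrm{pw}}(u_{\mathrm{nc}}-\widehat{u}_{\mathrm{nc}})]_F\times\nu_F\Vert_{L^2(F)}^2.
\end{align*}
The first summand is dominated by $h_{\max}^{2m}\Vert\lambda_h u_{\mathrm{nc}}-\widehat{\lambda}_h\widehat{u}_{\mathrm{nc}}\Vert_{L^2(T)}^2$. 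For the jump summand I would split the jump along the two sides of $F$ and apply the discrete trace inequality $\Vert p\Vert_{L^2(F)}^2\lesssim h_{T'}^{-1}\Vert p\Vert_{L^2(T')}^2$ for any polynomial $p$ on a tetrahedron $T'$ of $\widehat{\mathcal{T}}$ meeting $F$, absorbing $|T|^{1/3}\approx h_T$ against $h_{T'}^{-1}$ through shape-regularity. Summing over $T\in\mathcal{T}\cap\widehat{\mathcal{T}}$ with a finite-overlap argument then closes \ref{A1Stability} with $\Lambda_1$ depending only on shape-regularity and polynomial degree.

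For reduction \ref{A2Reduction} the plan is a two-step argument. First, a variant of the stability step above replaces $(\widehat{\lambda}_h,\widehat{u}_{\mathrm{nc}})$ by $(\lambda_h,u_{\mathrm{nc}})$ inside $\widehat{\eta}$: letting $\widetilde{\eta}(T)$ denote the resulting modified estimator on $T\in\widehat{\mathcal{T}}$, the analogous reverse-triangle/trace argument yields $\widehat{\eta}(\widehat{\mathcal{T}}\setminus\mathcal{T})\le\widetilde{\eta}(\widehat{\mathcal{T}}\setminus\mathcal{T})+\Lambda\,\delta(\mathcal{T},\widehat{\mathcal{T}})$. Second, each $T\in\widehat{\mathcal{T}}\setminus\mathcal{T}$ has an ancestor $T_0\in\mathcal{T}\setminus\widehat{\mathcal{T}}$ with $|T|\le|T_0|/2$ (one NVB-bisection), so $|T|^{2m/3}\le 2^{-2m/3}|T_0|^{2m/3}$ and $|T|^{1/3}\le 2^{-1/3}|T_0|^{1/3}$. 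Since $u_{\mathrm{nc}}$ is polynomial on $T_0$, jumps of $D^m_{\mathrm{pw}}u_{\mathrm{nc}}$ across refinement-induced interior faces vanish, while jumps on subfaces of the coarse faces of $T_0$ are restrictions of the coarse jumps and square-sum in $L^2$ to the coarse face-jump norm. The resulting reduction factor is $\max(2^{-2m/3},2^{-1/3})=2^{-1/3}$ for $m\ge 1$; hence $\widetilde{\eta}^2(\widehat{\mathcal{T}}\setminus\mathcal{T})\le 2^{-1/3}\eta^2(\mathcal{T}\setminus\widehat{\mathcal{T}})$, i.e.\ $\widetilde{\eta}\le 2^{-1/6}\eta\le 2^{-1/12}\eta$, which yields \ref{A2Reduction} with $\Lambda_2=\Lambda$.

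The main obstacle is the trace estimate in the stability step (both for \ref{A1Stability} and inside the first step of \ref{A2Reduction}) on a face $F$ of a coarse $T\in\mathcal{T}\cap\widehat{\mathcal{T}}$ whose neighbor in $\mathcal{T}$ is subdivided into possibly much smaller children $K_i$ in $\widehat{\mathcal{T}}$: the discrete trace bound on $K_i$ carries a factor $h_{K_i}^{-1}$, so $|T|^{1/3}\cdot h_{K_i}^{-1}\approx h_T/h_{K_i}$ is not \emph{a~priori} bounded. This must be handled via the refinement history of newest-vertex bisection together with uniform shape-regularity of $\mathbb{T}(\mathcal{T}_0)$ through a careful finite-overlap argument over the fine-mesh patch of $T$; cf.\ the analogous bookkeeping in \cite[\S6]{CP18}.
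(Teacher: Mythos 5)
Your route is essentially the paper's: a (reverse) triangle inequality on each common tetrahedron and each of its faces reduces \ref{A1Stability} to the control of the weighted jump sums
$\sum_{T}|T|^{1/3}\sum_{F\in\mathcal{F}(T)}\Vert[D^m_{\mathrm{pw}}(u_{\mathrm{nc}}-\widehat{u}_{\mathrm{nc}})]_F\times\nu_F\Vert^2_{L^2(F)}$
by $\vvvert u_{\mathrm{nc}}-\widehat{u}_{\mathrm{nc}}\vvvert^2_{\mathrm{pw}}$, and \ref{A2Reduction} follows from the NVB volume halving $|T|\le|K|/2$ for $T\in\widehat{\mathcal{T}}(K)$, $K\in\mathcal{T}\setminus\widehat{\mathcal{T}}$. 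The paper disposes of the jump sums in one stroke by quoting the discrete jump control \cite[Lem.~5.2]{CR16} with constant $C_{\mathrm{jc}}(0)$, which is precisely the trace-plus-finite-overlap estimate you sketch. Your two-step reduction (first replace $(\widehat{\lambda}_h,\widehat{u}_{\mathrm{nc}})$ by $(\lambda_h,u_{\mathrm{nc}})$ at the cost of $\Lambda\,\delta(\mathcal{T},\widehat{\mathcal{T}})$, then use that the coarse jumps vanish on refinement-interior faces) is also fine and even yields the sharper factor $2^{-1/6}\le 2^{-1/12}$, whereas the paper keeps the two contributions together and applies a Young inequality with $\beta=2^{1/6}-1$ on each refined element to arrive at $2^{-1/12}$.

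The only point to repair is your closing ``main obstacle'', which is not an obstacle and needs no NVB refinement-history bookkeeping. Since $\widehat{\mathcal{T}}\in\mathbb{T}(\mathcal{T})$ is a regular (conforming) triangulation and $T\in\mathcal{T}\cap\widehat{\mathcal{T}}$ is itself an element of $\widehat{\mathcal{T}}$, every face $F\in\mathcal{F}(T)$ is a complete common face of $T$ and of exactly one fine neighbour $T'\in\widehat{\mathcal{T}}$ (hanging nodes are excluded in admissible triangulations); shape regularity then gives $h_{T'}\ge\operatorname{diam}(F)\gtrsim h_T$, so the discrete trace inequality on $T$ and on $T'$ pairs the weight $|T|^{1/3}\le h_T$ with $h_T^{-1}$ and with $h_{T'}^{-1}\lesssim h_T^{-1}$, and the finite-overlap summation closes \ref{A1Stability}; in the reduction step the weights are the fine ones anyway, so no mismatch can arise there either. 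Alternatively, simply cite \cite[Lem.~5.2]{CR16} as the paper does. With that one observation your argument is complete and coincides in substance with the paper's proof.
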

				\begin{proof}
					A reverse triangle inequality in $\mathbb{R}^L$ for the number $L:=|\mathcal{T}\cap\widehat{\mathcal{T}}|$ of tetrahedra in 
					$\mathcal{T}\cap\widehat{\mathcal{T}}$ and one for each common tetrahedra $T\in\mathcal{T}\cap\widehat{\mathcal{T}}$ and each 
					of its faces $F\in\mathcal{F}(T)$ lead to 
					\begin{align*}
						\big\vert\eta(\mathcal{T}\cap \widehat{\mathcal T})
						-\widehat{\eta}(\mathcal{T}\cap\widehat{\mathcal T})\big\vert^2
						\le & \sum_{T\in \mathcal{T}\cap\widehat{\mathcal{T}}}\Big(|T|^{2{m}/3}\Vert \lambda_hu_{\mathrm{nc}}
						-\widehat{\lambda}_h\widehat{u}_{\mathrm{nc}}\Vert_{L^2(T)}^2
						 	 				\\&+|T|^{1/3}\sum_{F\in\mathcal{F}(T)}
						 	 				\Vert [D^{m}_{\mathrm{pw}}(u_{\mathrm{nc}}-\widehat{u}_{\mathrm{nc}})]_F\times \nu_F\Vert_{L^2(F)}^2\Big). 
					\end{align*}
					The discrete jump control from \cite[Lem.~5.2]{CR16} with constant $C_{\mathrm{jc}}(\ell)$ 
					(that only depends on the shape-regularity of $\mathbb{T}$  and the polynomial degree $\ell\in\mathbb{N}_0$) reads 	
					\begin{align*}
						\sum_{T\in\mathcal{T}}|T|^{1/3}\sum_{F\in\mathcal{F}(T)}\Vert [g]_F\Vert^2_{L^2(F)}
						\le C_{\mathrm{jc}}(\ell)^2\Vert g\Vert_{L^2(\Omega)}^2 \quad \text{ for any }g\in P_\ell(\mathcal{T}).
					\end{align*}
					The combination of the two displayed estimates concludes the proof of \ref{A1Stability} with 
					$\Lambda_1^2=\max\big\{\max_{T\in\mathcal{T}_0}|T|^{2m/3}, C_{\mathrm{jc}}(0)^2\big\}$.
					 For any tetrahedron $K\in\mathcal{T}\setminus\widehat{\mathcal{T}}$, let 
					 $\widehat{\mathcal{T}}(K):=\{T\in\widehat{\mathcal{T}}:\, T\subset K\}$ denote its fine triangulation. 
					 The newest-vertex bisection guarantees $|{T}|\le |K|/2$ for the volume $|T|$ of any ${T}\in \widehat{\mathcal{T}}(K)$.
					 This, a triangle inequality, and 
					 $(a+b)^2\le (1+\beta)a^2+(1+1/\beta)b^2$ for $a,\,b\ge 0,\,\beta=2^{1/6}-1>0$  show
					\begin{align*}
						  	\SPnew{\widehat{\eta}^2(\widehat{\mathcal{T}}(K))}
						  		 \le 2^{-1/6}& 
						  		 \SPnew{\eta^2(K)}
						  		 +(1+1/\beta)\sum_{T\in\widehat{\mathcal{T}}(K)}\Big(|T|^{2{m}/3}
						  		 \Vert \lambda_h u_{\mathrm{nc}}-\widehat{\lambda}_h\widehat{u}_{\mathrm{nc}}\Vert_{L^2(K)}^2\\
						  		 &+|T|^{1/3}\sum_{F\in\widehat{\mathcal{F}}(T)}
						  		 \Vert [D^{m}_{\mathrm{pw}}(\widehat{u}_{\mathrm{nc}}-u_{\mathrm{nc}})]_F\times\nu_F\Vert_{L^2(F)}^2\Big).
					\end{align*}
					The  summation over all $K\in\mathcal{T}\setminus\widehat{\mathcal{T}}$ and the above jump control 
					conclude the proof of \ref{A2Reduction} with 
					$\Lambda_2^2=2^{1/6}/(2^{1/6}-1)\,\max\big\{\max_{T\in\mathcal{T}_0}|T|^{2m/3}, C_{\mathrm{jc}}(0)^2\big\}$.	
					The arguments for \ref{A1Stability}--\ref{A2Reduction}  are similiar for other problems; 
					cf., e.g., \cite{CKN08,CFPP14,CR16,CHel17} for more details. 
				\end{proof}
		\subsection{Towards discrete reliability}\label{sec:proofofA3}
					Given  the $2$-level notation of \cref{def:2-level} with respect to $\mathcal{T}$ and $\widehat{\mathcal{T}}$, let
					$\mathcal{R}_1:=\{K\in\mathcal{T}:\,\exists\, T\in \mathcal{T}\setminus\widehat{\mathcal{T}} \text{ with } \textup{dist}(K,T)=0\}
					\subset\mathcal{T}$ 
					 denote the set of coarse but not fine tetrahedra plus one layer of coarse tetrahedra around. 
				\cref{lem:TowardsDisRel}--\ref{lem:delta_control} prepare the proof of the discrete reliability in \cref{thm:A34EVP} below.  
				Let $\widehat{I}:V+V(\widehat{\mathcal{T}})\to V(\widehat{\mathcal{T}})$ denote the 
				interpolation operator on the fine level of $\widehat{\mathcal{T}}$ so that \ref{item:I_identity} and a  Cauchy-Schwarz inequality 
				show, for any $v\in V+V(\widehat{\mathcal{T}})$ and any 
				$w\in  V+V({\mathcal{T}})+V(\widehat{\mathcal{T}})$, that 
				\begin{align}
					\begin{split}
					\vert b( (I-\widehat{I})v,w)\vert\le &\Vert  (I-\widehat{I})v\Vert_{L^2(\mathcal{T}\setminus\widehat{\mathcal{T}})} 
															\Vert w\Vert_{L^2(\mathcal{T}\setminus\widehat{\mathcal{T}})},\\
					\vert a_{\mathrm{pw}}((I-\widehat{I})v,w)\vert
						\le& \Vert D^{m}_{\mathrm{pw}} (I-\widehat{I})v\Vert_{L^2(\mathcal{T}\setminus\widehat{\mathcal{T}})} 
															\Vert D^{m}_{\mathrm{pw}}w\Vert_{L^2(\mathcal{T}\setminus\widehat{\mathcal{T}})}.\end{split}
															\label{eq:I-hatI_CS}
				\end{align} 
				\begin{lemma}[distance control I]\label{lem:TowardsDisRel}
				There exists $C_6>0$ such that any $\mathcal{T}\in\mathbb{T}(\varepsilon_5)$ and the difference $e:= \widehat{u}_{\mathrm{nc}}-u_{\mathrm{nc}}$ satisfy
					$$
						C_6^{-1}\vvvert e\vvvert_{\mathrm{pw}}^2
							\le
								\Vert D^{m}_{\mathrm{pw}} (u_{\mathrm{nc}}-Ju_{\mathrm{nc}})\Vert_{L^2(\mathcal{T}\setminus\widehat{\mathcal{T}})}^2
										+\Vert h_{\mathcal{T}}^{m}\lambda_hu_{\mathrm{nc}}\Vert_{L^2(\mathcal{T}\setminus \widehat{\mathcal{T}})}^2
										+ \Vert e\Vert_{L^2(\Omega)}^2+\Vert \delta u_{\mathrm{nc}}\Vert_{L^2(\Omega)}^2
										+\Vert \widehat{\delta}\widehat{ u}_{\mathrm{nc}}\Vert_{L^2(\Omega)}^2.
					$$
				\end{lemma}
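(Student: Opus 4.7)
The strategy is to split the error through the coarse-mesh interpolation $I$: write $e=(1-I)\widehat{u}_{\mathrm{nc}}+Ie$, where $Ie=I\widehat{u}_{\mathrm{nc}}-u_{\mathrm{nc}}\in V(\mathcal{T})$ (since $Iu_{\mathrm{nc}}=u_{\mathrm{nc}}$). By \ref{item:I_identity}, the first summand $(1-I)\widehat{u}_{\mathrm{nc}}$ vanishes on $\mathcal{T}\cap\widehat{\mathcal{T}}$, hence is supported on the refined region $\mathcal{T}\setminus\widehat{\mathcal{T}}$; by \cref{cor:InterpolationOperator}.\ref{item:cor_I_orthogonality}, $a_{\mathrm{pw}}((1-I)\widehat{u}_{\mathrm{nc}},Ie)=0$, so
\[
\vvvert e\vvvert_{\mathrm{pw}}^2
=\vvvert (1-I)\widehat{u}_{\mathrm{nc}}\vvvert_{\mathrm{pw}}^2
+\vvvert Ie\vvvert_{\mathrm{pw}}^2,
\]
and both pieces will be bounded separately by the right-hand side of the lemma.

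For the first piece, use $I\circ J=1$ from \ref{item:J_rightInverse} to rewrite $(1-I)\widehat{u}_{\mathrm{nc}}=(Ju_{\mathrm{nc}}-u_{\mathrm{nc}})+(1-I)(\widehat{u}_{\mathrm{nc}}-Ju_{\mathrm{nc}})$. A triangle inequality restricted to $\mathcal{T}\setminus\widehat{\mathcal{T}}$, together with \ref{item:I_Pi0} (so that $D^m_{\mathrm{pw}}(1-I)$ acts as $(1-\Pi_0)D^m_{\mathrm{pw}}$ and is a piecewise contraction), yields the coarse nonconforming gap $\Vert D^m_{\mathrm{pw}}(u_{\mathrm{nc}}-Ju_{\mathrm{nc}})\Vert_{L^2(\mathcal{T}\setminus\widehat{\mathcal{T}})}$ on the right, plus a remaining piecewise contribution of $\widehat{u}_{\mathrm{nc}}-Ju_{\mathrm{nc}}$ on the refined region that will be absorbed into the $e$-terms and the fine nonconformity (controlled in a subsequent step through jumps via \ref{item:J_T_That}/Remark~2.8).

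For the second piece, test the coarse eigenvalue equation \eqref{eq:disEVP_intermediate} with $\beta:=Ie\in V(\mathcal{T})$ and use the $a_{\mathrm{pw}}$-orthogonality to get $\vvvert \beta\vvvert_{\mathrm{pw}}^2=a_{\mathrm{pw}}(e,\beta)=a_{\mathrm{pw}}(\widehat{u}_{\mathrm{nc}},\beta)-\lambda_h(u_{\mathrm{nc}},\beta)_{1+\delta}$. Replace $\beta$ with $\widehat{I}\beta+(\beta-\widehat{I}\beta)$: the fine eigenvalue equation applied to $\widehat{I}\beta\in V(\widehat{\mathcal{T}})$ produces $\widehat{\lambda}_h(\widehat{u}_{\mathrm{nc}},\widehat{I}\beta)_{1+\widehat{\delta}}$, and the remainder $a_{\mathrm{pw}}(\widehat{u}_{\mathrm{nc}},\beta-\widehat{I}\beta)$ is controlled by \eqref{eq:I-hatI_CS}. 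Splitting the weighted inner products as $(\cdot,\cdot)_{1+\delta}=b(\cdot,\cdot)+b(\delta\,\cdot,\cdot)$ (and similarly with $\widehat{\delta}$), one rearranges the $L^2$ main part as $b(\widehat{\lambda}_h\widehat{u}_{\mathrm{nc}}-\lambda_hu_{\mathrm{nc}},\widehat{I}\beta)+\lambda_h b(u_{\mathrm{nc}},\widehat{I}\beta-\beta)$, which Cauchy-Schwarz bounds by $\Vert e\Vert_{L^2(\Omega)}$ and the volume residual $\Vert h_{\mathcal{T}}^m\lambda_hu_{\mathrm{nc}}\Vert_{L^2(\mathcal{T}\setminus\widehat{\mathcal{T}})}$, while the stabilization part directly produces $\Vert \delta u_{\mathrm{nc}}\Vert_{L^2(\Omega)}$ and $\Vert \widehat{\delta}\widehat{u}_{\mathrm{nc}}\Vert_{L^2(\Omega)}$. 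An absorption of $\vvvert \beta\vvvert_{\mathrm{pw}}$ on the right through Young's inequality finishes the bound.

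The main obstacle is book-keeping across the two meshes: $V(\mathcal{T})$ is not a subspace of $V(\widehat{\mathcal{T}})$ for the nonconforming spaces of interest, so one must carefully justify that $\widehat{I}$ applies to $\beta\in V(\mathcal{T})$ (relying on the CR/Morley definitions via face averages) and that the $a_{\mathrm{pw}}$-orthogonality of $\beta-\widehat{I}\beta$ against $\widehat{u}_{\mathrm{nc}}$ is valid. A related delicate point is that the localization of $\widehat{u}_{\mathrm{nc}}-Ju_{\mathrm{nc}}$ to $\mathcal{T}\setminus\widehat{\mathcal{T}}$ is not automatic; it requires invoking \eqref{eq:I-hatI_CS} to confine the support to the refined region and then exploiting that $D^m u_{\mathrm{nc}}$ is constant on coarse tetrahedra, so $(1-\Pi_0)D^m_{\mathrm{pw}}\widehat{u}_{\mathrm{nc}}$ reduces to $(1-\Pi_0)D^m_{\mathrm{pw}}e$ there, enabling the clean separation into the five terms on the right-hand side.
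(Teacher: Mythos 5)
Your plan has a genuine gap in the treatment of the first piece of your Pythagoras split. Writing $\vvvert e\vvvert_{\mathrm{pw}}^2=\vvvert (1-I)\widehat{u}_{\mathrm{nc}}\vvvert_{\mathrm{pw}}^2+\vvvert Ie\vvvert_{\mathrm{pw}}^2$ is fine (by \ref{item:I_Pi0}/\cref{cor:InterpolationOperator}.\ref{item:cor_I_orthogonality} and $Iu_{\mathrm{nc}}=u_{\mathrm{nc}}$), but it forces you to bound $\vvvert (1-I)\widehat{u}_{\mathrm{nc}}\vvvert_{\mathrm{pw}}^2$ \emph{on its own} by the right-hand side, and your argument does not do this. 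The splitting $(1-I)\widehat{u}_{\mathrm{nc}}=(Ju_{\mathrm{nc}}-u_{\mathrm{nc}})+(1-I)(\widehat{u}_{\mathrm{nc}}-Ju_{\mathrm{nc}})$ leaves the term $\Vert D^m_{\mathrm{pw}}(\widehat{u}_{\mathrm{nc}}-Ju_{\mathrm{nc}})\Vert_{L^2(\mathcal{T}\setminus\widehat{\mathcal{T}})}$, which is a \emph{fine-mesh} nonconformity and is not among the five admissible terms: the only energy-type term allowed is the coarse gap $\Vert D^m_{\mathrm{pw}}(u_{\mathrm{nc}}-Ju_{\mathrm{nc}})\Vert_{L^2(\mathcal{T}\setminus\widehat{\mathcal{T}})}$, and the only $e$-term is the $L^2$ norm $\Vert e\Vert_{L^2(\Omega)}$, so there is nothing to ``absorb'' it into — a triangle inequality back to $e$ produces $\vvvert e\vvvert_{\mathrm{pw}}$ with coefficient $1$, which cannot be absorbed into the left-hand side, and invoking \ref{item:J_T_That}/\cref{rem:J2_wanted} for $\widehat{u}_{\mathrm{nc}}$ would bring in fine estimator contributions $\widehat{\eta}$ that also do not appear in the lemma. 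Indeed $(1-I)\widehat{u}_{\mathrm{nc}}=(1-I)e$ carries exactly the hard part of $e$, so it cannot be dispatched by triangle inequalities; it has to be fed through the eigenvalue equations. This is why the paper never splits $\vvvert e\vvvert_{\mathrm{pw}}^2$ additively: it writes $\vvvert e\vvvert_{\mathrm{pw}}^2=a_{\mathrm{pw}}(e,Ju_{\mathrm{nc}}-u_{\mathrm{nc}})+a_{\mathrm{pw}}(e,\widehat{u}_{\mathrm{nc}}-Ju_{\mathrm{nc}})$, so that quantities like $\vvvert(1-I)e\vvvert_{\mathrm{pw}}\le\vvvert e\vvvert_{\mathrm{pw}}$ only ever appear multiplied by right-hand-side terms and can be removed by Young's inequality.

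Two further points in your second piece would also need repair. First, replacing the test function $e$ by $\widehat{I}\beta$ with $\beta=Ie$ destroys the exact cancellation the paper exploits: with test function $e$ and the normalizations $\Vert u_{\mathrm{nc}}\Vert_{L^2(\Omega)}=\Vert\widehat{u}_{\mathrm{nc}}\Vert_{L^2(\Omega)}=1$ one has $b(\widehat{\lambda}_h\widehat{u}_{\mathrm{nc}}-\lambda_h u_{\mathrm{nc}},e)=\tfrac{\widehat{\lambda}_h+\lambda_h}{2}\Vert e\Vert^2_{L^2(\Omega)}$ because $b(\widehat{u}_{\mathrm{nc}}+u_{\mathrm{nc}},\widehat{u}_{\mathrm{nc}}-u_{\mathrm{nc}})=0$; with $\widehat{I}\beta$ instead, the eigenvalue difference $|\widehat{\lambda}_h-\lambda_h|$ (equivalently $\Vert\widehat{\lambda}_h\widehat{u}_{\mathrm{nc}}-\lambda_hu_{\mathrm{nc}}\Vert_{L^2(\Omega)}$) survives, and it is not one of the terms the lemma permits — controlling it via \cref{thm:BoundInterpolationError} or \cref{lem:delta_control} would add $\vvvert u-Iu\vvvert_{\mathrm{pw}}$-type terms and hence prove a different statement. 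Second, $\widehat{I}$ is only defined on $V+V(\widehat{\mathcal{T}}')$ for refinements $\widehat{\mathcal{T}}'$ of $\widehat{\mathcal{T}}$, so applying it to the coarse discrete function $\beta\in V(\mathcal{T})$ lies outside the axioms \ref{item:I_kappa}--\ref{item:I_kappa_disc} (and for Crouzeix--Raviart/Morley a coarse function is two-valued on fine subfaces of coarse faces, so the face averages are not canonically defined); the paper deliberately avoids this by routing all cross-level interpolation through the conforming companion, e.g.\ $\widehat{I}Ju_{\mathrm{nc}}$ with $Ju_{\mathrm{nc}}\in V$, and by using only $(I-\widehat{I})$ applied to functions in $V+V(\widehat{\mathcal{T}})$ as in \eqref{eq:I-hatI_CS}.
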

				\begin{proof}
					\cref{cor:InterpolationOperator}.\ref{item:cor_I_a} shows  
					$
						 a_{\mathrm{pw}}(e, \widehat{u}_{\mathrm{nc}}- Ju_{\mathrm{nc}})
							= a_{\mathrm{pw}}\big(\widehat{u}_{\mathrm{nc}}, \widehat{u}_{\mathrm{nc}}- \widehat{I}Ju_{\mathrm{nc}}\big)
							-a_{\mathrm{pw}}\big(u_{\mathrm{nc}}, I(\widehat{u}_{\mathrm{nc}}- Ju_{\mathrm{nc}})\big).
					$ 
					Since $(\lambda_h,u_{\mathrm{nc}})$ and $(\widehat{\lambda}_h,\widehat{u}_{\mathrm{nc}})$ solve \eqref{eq:disEVP}, this and 
					 \ref{item:J_rightInverse} lead to 
					\begin{align}
							a_{\mathrm{pw}}(e, \widehat{u}_{\mathrm{nc}}- Ju_{\mathrm{nc}})=& b\big(\widehat{\lambda}_h\widehat{u}_{\mathrm{nc}}, 
							(1+\widehat{\delta})(\widehat{u}_{\mathrm{nc}}- \widehat{I}Ju_{\mathrm{nc}})\big)
									-b\big(\lambda_h u_{\mathrm{nc}},(1+\delta)(I\widehat{u}_{\mathrm{nc}}- u_{\mathrm{nc}})\big)\notag\\
							=&b(\widehat{\lambda}_h\widehat{u}_{\mathrm{nc}}-\lambda_h u_{\mathrm{nc}},e)
							+b(\widehat{\lambda}_h\widehat{u}_{\mathrm{nc}},\widehat{\delta}e)-b(\lambda_h u_{\mathrm{nc}},\delta e)\notag\\
							 &+b\big(\widehat{\lambda}_h\widehat{u}_{\mathrm{nc}}, 
							 (1\hspace{-0.1em} +\hspace{-0.1em} \widehat{\delta})(u_{\mathrm{nc}}-\widehat{I}Ju_{\mathrm{nc}})\big)
							 +
							b\big(\lambda_h u_{\mathrm{nc}},(1\hspace{-0.1em} +\hspace{-0.1em} \delta)(\widehat{u}_{\mathrm{nc}}-I\widehat{u}_{\mathrm{nc}})\big).  
							 \label{eq:sum_TowardsDisRel}
					\end{align}  
					Elementary algebra with $\Vert u_{\mathrm{nc}}\Vert_{L^2(\Omega)}=\Vert\widehat{u}_{\mathrm{nc}}\Vert_{L^2(\Omega)}=1$ shows 
					(as, e.g., in \cite[Lem.~3.1]{CG11}) 
					\begin{align*}
						b(\widehat{\lambda}_h\widehat{u}_{\mathrm{nc}}-\lambda_h u_{\mathrm{nc}},e)
							&=\frac{\widehat{\lambda}_h+\lambda_h}{2}\Vert e\Vert_{L^2(\Omega)}^2
							+\frac{\widehat{\lambda}_h-\lambda_h}{2}b(\widehat{u}_{\mathrm{nc}}
							+u_{\mathrm{nc}},\widehat{u}_{\mathrm{nc}}-u_{\mathrm{nc}})
							=\frac{\widehat{\lambda}_h+\lambda_h}{2}\Vert e\Vert_{L^2(\Omega)}^2.
					\end{align*}
					Cauchy-Schwarz inequalities verify  
					\begin{align*}
						b(\widehat{\lambda}_h\widehat{u}_{\mathrm{nc}},\widehat{\delta}e)-b(\lambda_h u_{\mathrm{nc}},\delta e)
							\le \Vert e\Vert_{L^2(\Omega)}\big(\widehat{\lambda}_h 
							\Vert \widehat{\delta}\widehat{ u}_{\mathrm{nc}}\Vert_{L^2(\Omega)}
							+\lambda_h\Vert \delta u_{\mathrm{nc}}\Vert_{L^2(\Omega)}\big).
					\end{align*}
					Since $1+\widehat{\delta}\le 2$ and $\widehat{\lambda}_h\le \lambda$ from \cref{tab:2level}, 
					the right inverse property \ref{item:J_rightInverse} and \eqref{eq:I-hatI_CS} result in 
					\begin{align*}
						b\big((1+\widehat{\delta})\widehat{\lambda}_h\widehat{u}_{\mathrm{nc}}, u_{\mathrm{nc}}-\widehat{I}Ju_{\mathrm{nc}}\big)
							=&b\big((1+\widehat{\delta})\widehat{\lambda}_h\widehat{u}_{\mathrm{nc}}, (I-\widehat{I})Ju_{\mathrm{nc}}\big)
							\\\le& 2
							\Vert h_{\mathcal{T}}^{m}{\lambda}\widehat{u}_{\mathrm{nc}}\Vert_{L^2({\mathcal{T}}\setminus \widehat{\mathcal{T}})}
							\Vert h_{\mathcal{T}}^{-{m}}	(I-\widehat{I})Ju_{\mathrm{nc}}\Vert_{L^2({\mathcal{T}}\setminus \widehat{\mathcal{T}})}.
					\end{align*}
					The triangle inequality  
					$
						\Vert h_{\mathcal{T}}^{m}{\lambda}\widehat{u}_{\mathrm{nc}}\Vert_{L^2({\mathcal{T}}\setminus \widehat{\mathcal{T}})}
						\le h_{\max}^{m}\lambda\Vert e\Vert_{L^2(\Omega)}
						+\Vert  h_{\mathcal{T}}^{m}\lambda u_{\mathrm{nc}}\Vert_{L^2(\mathcal{T}\setminus \widehat{\mathcal{T}})}
					$ and $\lambda/\lambda_h\le 2$ from \cref{thm:BoundInterpolationError}.\ref{item:lambdahSimple} imply 
					$\Vert  h_{\mathcal{T}}^{m}\lambda u_{\mathrm{nc}}\Vert_{L^2(\mathcal{T}\setminus \widehat{\mathcal{T}})}
					\le 2\Vert  h_{\mathcal{T}}^{m}\lambda_h u_{\mathrm{nc}}\Vert_{L^2(\mathcal{T}\setminus \widehat{\mathcal{T}})}$. 
					Since the interpolation operators $I$ and $\widehat{I}$ satisfy \ref{item:I_identity}--\ref{item:I_kappa_disc}, it follows  that 
					\begin{align*} 
						\Vert h_{\mathcal{T}}^{-{m}}	(I-\widehat{I})Ju_{\mathrm{nc}}\Vert_{L^2({\mathcal{T}}\setminus \widehat{\mathcal{T}})}
							&=\Vert h_{\mathcal{T}}^{-{m}}	(1-I)\widehat{I}Ju_{\mathrm{nc}}\Vert_{L^2({\mathcal{T}}\setminus \widehat{\mathcal{T}})}
						\le \kappa_d \Vert D^{m}_{\mathrm{pw}}(1-I)\widehat{I}Ju_{\mathrm{nc}}\Vert_{L^2({\mathcal{T}}\setminus \widehat{\mathcal{T}})}.
					\end{align*}
					Recall $D^m_{\mathrm{pw}}u_{\mathrm{nc}}\in P_0(\mathcal{T};\mathbb{R}^{3^m})$. 
					The condition \ref{item:I_Pi0} and the $L^2$-orthogonal projections $\Pi_0$ (resp. $\widehat{\Pi}_0$) onto $P_0(\mathcal{T})$ (resp. 
					$P_0(\widehat{\mathcal{T}})$) lead to the estimate 
					\begin{align*}
						&\kappa_d^{-1} 
						\Vert h_{\mathcal{T}}^{-{m}}	(I-\widehat{I})Ju_{\mathrm{nc}}\Vert_{L^2({\mathcal{T}}\setminus \widehat{\mathcal{T}})}
							\le \Vert(\Pi_0-\widehat{\Pi}_0) D^{m}Ju_{\mathrm{nc}}\Vert_{L^2({\mathcal{T}}\setminus \widehat{\mathcal{T}})}\\
							=& \Vert(\Pi_0-\widehat{\Pi}_0) D^{m}_{\mathrm{pw}}
							(Ju_{\mathrm{nc}}-u_{\mathrm{nc}})\Vert_{L^2({\mathcal{T}}\setminus \widehat{\mathcal{T}})}
							\le  \Vert D^{m}_{\mathrm{pw}}(Ju_{\mathrm{nc}}-u_{\mathrm{nc}})\Vert_{L^2({\mathcal{T}}\setminus \widehat{\mathcal{T}})}.
					\end{align*}
				  	The estimate \eqref{eq:I-hatI_CS} and  $\delta\le 1$ from \cref{tab:2level} imply the first inequality and  
				  	\ref{item:I_kappa_disc} and \cref{cor:InterpolationOperator}.\ref{item:cor_I_orthogonality} the second  in  
				  	\begin{align*}
				  		&b\big( \lambda_h u_{\mathrm{nc}},(1+\delta)(\widehat{u}_{\mathrm{nc}}-I\widehat{u}_{\mathrm{nc}})\big)
				  				=b\big( \lambda_hu_{\mathrm{nc}},(1+\delta)(\widehat{I}-I)\widehat{u}_{\mathrm{nc}}\big)
				  				\\\le& 2 
				  				\Vert h_{\mathcal{T}}^{m}\lambda_hu_{\mathrm{nc}}\Vert _{L^2({\mathcal{T}}\setminus \widehat{\mathcal{T}})}
				  				\Vert h_{\mathcal{T}}^{-{m}}(\widehat{u}_{\mathrm{nc}}
				  				-I\widehat{u}_{\mathrm{nc}})\Vert _{L^2({\mathcal{T}}\setminus \widehat{\mathcal{T}})}
				  				\le 2 \kappa_d
				  				\Vert h_{\mathcal{T}}^{m}\lambda_hu_{\mathrm{nc}}\Vert _{L^2({\mathcal{T}}\setminus\widehat{ \mathcal{T}})}
				  							\vvvert e\vvvert _{\mathrm{pw}}. 
				  	\end{align*} 
				  	The combination of the   six previously displayed estimates and  
					$\lambda_h,\widehat{\lambda}_h\le \lambda$  lead in \eqref{eq:sum_TowardsDisRel} to 
				  	\begin{align*}
				  	a_{\mathrm{pw}}(e, \widehat{u}_{\mathrm{nc}}- Ju_{\mathrm{nc}})
				  	 \le  2\kappa_d \Vert D^{m}_{\mathrm{pw}} (u_{\mathrm{nc}}-Ju_{\mathrm{nc}})
				  					\Vert_{L^2(\mathcal{T}\setminus\widehat{\mathcal{T}})}
				  			\big(\lambda h_{\max}^{m}\Vert e\Vert_{L^2(\Omega)}
				  			+2\Vert  h_{\mathcal{T}}^{m}\lambda_hu_{\mathrm{nc}}\Vert_{L^2(\mathcal{T}\setminus \widehat{\mathcal{T}})}\big) &\\
				  			+\lambda\Vert e\Vert_{L^2(\Omega)}\big( \Vert e\Vert_{L^2(\Omega)}+\Vert \delta u_{\mathrm{nc}}\Vert_{L^2(\Omega)}+\Vert 
				  			\widehat{\delta}\widehat{ u}_{\mathrm{nc}}\Vert_{L^2(\Omega)}\big)
				  			+2\kappa_d\vvvert e\vvvert _{\mathrm{pw}}\Vert h_{\mathcal{T}}^{m} \lambda_h u_{\mathrm{nc}}
				  			\Vert _{L^2({\mathcal{T}}\setminus\widehat{ \mathcal{T}})}.&
					\end{align*}				  		 
				  	Additionally, \cref{cor:ConformingCompanion}.\ref{item:cor_J_a_vh} and \eqref{eq:I-hatI_CS} show 
					\begin{align*}
						 a_{\mathrm{pw}}(e, Ju_{\mathrm{nc}}-u_{\mathrm{nc}})
						 &=a_{\mathrm{pw}}((1-I)e, Ju_{\mathrm{nc}}-u_{\mathrm{nc}})
						 =a_{\mathrm{pw}}\big((\widehat{I}-I)\widehat{u}_{\mathrm{nc}}, Ju_{\mathrm{nc}}-u_{\mathrm{nc}}\big)\\
						 &\le \Vert D^{m}_{\mathrm{pw}}(1-I)e\Vert_{L^2(\mathcal{T}\setminus\widehat{\mathcal{T}})}
						 		\Vert D^{m}_{\mathrm{pw}} (u_{\mathrm{nc}}-Ju_{\mathrm{nc}})\Vert_{L^2(\mathcal{T}\setminus\widehat{\mathcal{T}})}.
					\end{align*}	
					Condition \ref{item:I_Pi0} and the boundedness of $\Pi_0$ show  	
					$\Vert D^{m}_{\mathrm{pw}}(1-I)e\Vert_{L^2(\mathcal{T}\setminus\widehat{\mathcal{T}})}
					\le \vvvert e\vvvert_{\mathrm{pw}}$.
				  	This and the combination of the two previously displayed estimates with a triangle inequality prove  
				  	\begin{align*}
				  		\vvvert e\vvvert_{\mathrm{pw}}^2 
				  			=&a_{\mathrm{pw}}(e, Ju_{\mathrm{nc}}-u_{\mathrm{nc}})+a_{\mathrm{pw}}(e, \widehat{u}_{\mathrm{nc}}- Ju_{\mathrm{nc}})\\
				  			 \le &  \Vert D^{m}_{\mathrm{pw}} (u_{\mathrm{nc}}-Ju_{\mathrm{nc}})
				  					\Vert_{L^2(\mathcal{T}\setminus\widehat{\mathcal{T}})}
				  			\big(\vvvert e\vvvert_{\mathrm{pw}}+2\kappa_d \lambda h_{\max}^{m}\Vert e\Vert_{L^2(\Omega)}
				  								+4\kappa_d \Vert  h_{\mathcal{T}}^{m}\lambda_hu_{\mathrm{nc}}
				  								\Vert_{L^2(\mathcal{T}\setminus \widehat{\mathcal{T}})}\big)\\
				  			&+\lambda\Vert e\Vert_{L^2(\Omega)}\big( \Vert e\Vert_{L^2(\Omega)}+\Vert \delta u_{\mathrm{nc}}\Vert_{L^2(\Omega)}+\Vert 
				  			\widehat{\delta}\widehat{ u}_{\mathrm{nc}}\Vert_{L^2(\Omega)}\big)
				  			+2\kappa_d\vvvert e\vvvert _{\mathrm{pw}}\Vert h_{\mathcal{T}}^{m}\lambda_h u_{\mathrm{nc}}
				  			\Vert _{L^2({\mathcal{T}}\setminus\widehat{ \mathcal{T}})}\\
						\le& (1+4\kappa_d^2+\kappa_d^2\lambda^2h_{\max}^{2m})
						\Vert D^{m}_{\mathrm{pw}} (u_{\mathrm{nc}}-Ju_{\mathrm{nc}})\Vert_{L^2(\mathcal{T}\setminus\widehat{\mathcal{T}})}^2
				  			+\Vert \delta u_{\mathrm{nc}}\Vert_{L^2(\Omega)}^2
				  			+\Vert\widehat{\delta}\widehat{ u}_{\mathrm{nc}}\Vert_{L^2(\Omega)}^2
				  			\\&+(1+\lambda+\lambda^2/2) \Vert e\Vert_{L^2(\Omega)}^2	
				  			+ (1+4\kappa_d^2)
				  			\Vert h_{\mathcal{T}}^{m}\lambda_h u_{\mathrm{nc}}\Vert _{L^2({\mathcal{T}}\setminus\widehat{ \mathcal{T}})}^2
				  			+{\vvvert e\vvvert _{\mathrm{pw}}^2}/{2}
					\end{align*}
				  	with weighted Young inequalities in the last step. This concludes the proof with 
				  	$C_6:=2\max\{1+4\kappa_d^2+\kappa_d^2\lambda^2h_{\max}^{2m}, 1+\lambda+\lambda^2/2\}$. 
					\end{proof}
					
\subsubsection{Reliability and efficiency}
A first consequence of \cref{lem:TowardsDisRel} is the reliability of the error estimator $\eta(\mathcal{T})$ from \eqref{eq:def_eta}. 
					
\begin{theorem}[reliability and efficiency]\label{thm:reliabilty+efficiency}
						There exist $C_{\mathrm{rel}},\,C_{\mathrm{eff}},$ and $\varepsilon_6>0$ such that \\
\mbox{}\hspace{35mm}$
		 C_{\mathrm{eff}}^{-1}\eta (\mathcal{T})
							 \le \vvvert u-u_{\mathrm{nc}}\vvvert_{\mathrm{pw}}\le C_{\mathrm{rel}} \,\eta(\mathcal{T})
						$ \hfill holds for $\mathcal{T}\in\mathbb{T}(\varepsilon_6)$. 
					\end{theorem}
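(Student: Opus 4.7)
For the lower estimate $\eta(\mathcal{T})\lesssim\vvvert u-u_{\mathrm{nc}}\vvvert_{\mathrm{pw}}$ the plan is standard. The jump contributions in $\eta(\mathcal{T})$ will be bounded by $\vvvert u-u_{\mathrm{nc}}\vvvert_{\mathrm{pw}}$ through the second inequality in \ref{item:J_approx} together with the trivial best-approximation estimate $\min_{v\in V}\vvvert u_{\mathrm{nc}}-v\vvvert_{\mathrm{pw}}\le\vvvert u-u_{\mathrm{nc}}\vvvert_{\mathrm{pw}}$. The volume contributions $|T|^{2m/3}\Vert\lambda_h u_{\mathrm{nc}}\Vert^2_{L^2(T)}$ sum, up to shape-regularity, to $\Vert h_{\mathcal{T}}^m\lambda_h u_{\mathrm{nc}}\Vert^2_{L^2(\Omega)}$, which is controlled by the already-established efficiency estimate \eqref{eq:ControlOfDeltaUh} combined with $\lambda_h\le\lambda$ and the sharp $L^2$-error bound $\Vert u-u_{\mathrm{nc}}\Vert_{L^2(\Omega)}\le\sqrt{C_0}\,h_{\max}^\sigma\vvvert u-u_{\mathrm{nc}}\vvvert_{\mathrm{pw}}$ from \cref{thm:BoundInterpolationError}.\ref{item:BoundInterpolationError}.

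\textbf{Reliability via a fine auxiliary refinement.} For the upper estimate the plan is to invoke \cref{lem:TowardsDisRel} with a sequence $(\widehat{\mathcal{T}}_j)_{j\in\mathbb{N}}\subset\mathbb{T}(\mathcal{T})$ of uniform NVB refinements of $\mathcal{T}$ arranged so that $\mathcal{T}\setminus\widehat{\mathcal{T}}_j=\mathcal{T}$ (every coarse tetrahedron is bisected) and $\widehat{h}_{\max,j}\to 0$. Under this choice the set $\mathcal{R}_1$ of \cref{rem:J2_wanted} equals $\mathcal{T}$, so the first right-hand side term of \cref{lem:TowardsDisRel} satisfies $\Vert D^m_{\mathrm{pw}}(u_{\mathrm{nc}}-Ju_{\mathrm{nc}})\Vert^2_{L^2(\Omega)}\le M_5\eta^2(\mathcal{T})$, while the second equals $\Vert h_{\mathcal{T}}^m\lambda_h u_{\mathrm{nc}}\Vert^2_{L^2(\Omega)}\lesssim\eta^2(\mathcal{T})$ by shape-regularity. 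The three perturbation terms are then treated as follows: \cref{rem:delta} combined with $\lambda_h\ge\lambda/2$ yields $\Vert\delta u_{\mathrm{nc}}\Vert^2_{L^2(\Omega)}\lesssim h_{\max}^{2m}\eta^2(\mathcal{T})$ and, analogously, $\Vert\widehat{\delta}\widehat{u}_{\mathrm{nc}}\Vert^2_{L^2(\Omega)}\lesssim\widehat{h}_{\max,j}^{2m}\to 0$; a triangle inequality using \cref{thm:BoundInterpolationError}.\ref{item:BoundInterpolationError} on both levels produces $\Vert e\Vert^2_{L^2(\Omega)}\lesssim h_{\max}^{2\sigma}\vvvert u-u_{\mathrm{nc}}\vvvert^2_{\mathrm{pw}}$ modulo terms that vanish as $j\to\infty$.

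\textbf{Absorption, passage to the limit, and the main obstacle.} A further triangle inequality $\vvvert u-u_{\mathrm{nc}}\vvvert_{\mathrm{pw}}\le\vvvert u-\widehat{u}_{\mathrm{nc},j}\vvvert_{\mathrm{pw}}+\vvvert e\vvvert_{\mathrm{pw}}$ and the passage $j\to\infty$, driven by the a priori energy convergence $\vvvert u-\widehat{u}_{\mathrm{nc},j}\vvvert_{\mathrm{pw}}\to 0$ that follows from \cref{thm:BoundInterpolationError}.\ref{item:BoundInterpolationError} on the fine level together with $\vvvert u-\widehat{I}u\vvvert_{\mathrm{pw}}\to 0$ (a consequence of \ref{item:I_Pi0}), reduce the bound to $\vvvert u-u_{\mathrm{nc}}\vvvert^2_{\mathrm{pw}}\lesssim\eta^2(\mathcal{T})+h_{\max}^{2\sigma}\vvvert u-u_{\mathrm{nc}}\vvvert^2_{\mathrm{pw}}$. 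Choosing $\varepsilon_6\le\varepsilon_5$ small enough that the trailing $h_{\max}^{2\sigma}$ term is absorbed into the left-hand side closes the argument. The principal obstacle is precisely this absorption: it is feasible only thanks to the sharper $h_{\max}^{\sigma}$ gain in \cref{thm:BoundInterpolationError}.\ref{item:BoundInterpolationError} obtained from the medius analysis. Without that improved $L^2$-error bound, neither the $\Vert e\Vert^2_{L^2(\Omega)}$ perturbation inherited from \cref{lem:TowardsDisRel} nor the $\Vert u-u_{\mathrm{nc}}\Vert^2_{L^2(\Omega)}$ arising in the efficiency of the volume residual would be absorbable in $\vvvert u-u_{\mathrm{nc}}\vvvert^2_{\mathrm{pw}}$ for arbitrary admissible mesh-sizes, and the threshold $\varepsilon_6$ would not exist.
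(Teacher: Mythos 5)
Your proposal is correct and follows essentially the same route as the paper: efficiency from the second inequality in \ref{item:J_approx} plus the bubble-function bound \eqref{eq:ControlOfDeltaUh} and the $L^2$ estimate of \cref{thm:BoundInterpolationError}.\ref{item:BoundInterpolationError}, and reliability by passing to the limit in \cref{lem:TowardsDisRel} along uniform refinements with $\mathcal{T}\setminus\widehat{\mathcal{T}}=\mathcal{T}$, then absorbing the $h_{\max}^{2\sigma}\vvvert u-u_{\mathrm{nc}}\vvvert_{\mathrm{pw}}^2$ term for $h_{\max}\le\varepsilon_6$. The only cosmetic deviation is that you bound $\Vert D^m_{\mathrm{pw}}(u_{\mathrm{nc}}-Ju_{\mathrm{nc}})\Vert_{L^2(\Omega)}$ via \cref{rem:J2_wanted} (constant $M_5$) where the paper uses the first inequality of \ref{item:J_approx} (constant $M_1$), which is immaterial.
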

			\begin{proofof}\textit{reliability.}	
			\cref{lem:TowardsDisRel} 
			holds for any refinement $\widehat{\mathcal{T}}\in\mathbb{T}(\mathcal{T})$ of $\mathcal{T}\in\mathbb{T}(\varepsilon_5)$  
			and we may consider a sequence $\widehat{\mathcal{T}}=\widehat{\mathcal{T}}_\ell$ of uniform mesh-refinements of $\mathcal{T}$. 
			The reliability follows  in the limit as 
			$\widehat{h}_{\max}\to 0$  for $\ell\to\infty$ and $\vvvert u-\widehat{u}_{\mathrm{nc}}\vvvert_{\mathrm{pw}}\to 0$ 
			from  \cref{thm:BoundInterpolationError}.\ref{item:BoundInterpolationError}.  
			The left-hand side of \cref{lem:TowardsDisRel} converges to $C_6^{-1}\vvvert u-u_{\mathrm{nc}}\vvvert_{\mathrm{pw}}$.  
			On the right-hand side, $\Vert \widehat{\delta}\widehat{ u}_{\mathrm{nc}}\Vert_{L^2(\Omega)}\le C_{\delta} \widehat{h}_{\max}^{2m}$ 
			converges to zero and $\Vert e\Vert_{L^2(\Omega)}\to \Vert u- u_{\mathrm{nc}}\Vert_{L^2(\Omega)}$ as 
			$\widehat{h}_{\max}\to 0$. 
			Moreover the shape-regularity $h_T\le C_{\mathrm{sr}}|T|^{1/3}$  for $T\in\mathcal{T}\in\mathbb{T}$, \ref{item:J_approx}, and 
			$\Vert {\delta}{ u}_{\mathrm{nc}}\Vert_{L^2(\Omega)}
			\le  2\kappa_m^2 {h}_{\max}^{m}\Vert h_{\mathcal{T}}^m\lambda_h{ u}_{\mathrm{nc}}\Vert_{L^2(\Omega)}$ 
			show  
			$$\Vert D^{m}_{\mathrm{pw}} (u_{\mathrm{nc}}-Ju_{\mathrm{nc}})\Vert_{L^2(\Omega)}^2
										+\Vert h_{\mathcal{T}}^{m}\lambda_h u_{\mathrm{nc}}\Vert_{L^2(\Omega)}^2
										+\Vert \delta u_{\mathrm{nc}}\Vert_{L^2(\Omega)}^2
										\le \max\{M_1,C_{\mathrm{sr}}^{2m}(1+4\kappa_m^4 {h}_{\max}^{2m})\} \eta^2(\mathcal{T}).$$						
			For the remaining term on the right-hand side, \eqref{eq:L2errorEnergyError} and 
			\cref{cor:InterpolationOperator}.\ref{item:cor_I_orthogonality} show 
			$$
					C_5^{-1}\Vert u-u_{\mathrm{nc}}\Vert_{L^2(\Omega)}\le h_{\max}^{\sigma } \vvvert u-Iu\vvvert_{\mathrm{pw}}
					\le  h_{\max}^{\sigma } \vvvert u-u_{\mathrm{nc}}\vvvert_{\mathrm{pw}} .			
			$$
			A reduction to $\varepsilon_6:=\min\{\varepsilon_5,(2C_5^2\SPnew{C_6})^{-1/(2\sigma)}\}$ such that $C_5^{2}C_6h_{\max}^{2\sigma}\le 1/2$ 
			allows for the absorption of $C_5^2\SPnew{C_6}h_{\max}^{2\sigma}\vvvert u-u_{\mathrm{nc}}\vvvert_{\mathrm{pw}}^2\le \vvvert u-u_{\mathrm{nc}}\vvvert_{\mathrm{pw}}^2/2$ 
			and concludes the proof with $C_{\mathrm{rel}}^2:=2C_6\max\{M_1,C_{\mathrm{sr}}^{2m}(1+4\kappa_m^4 {h}_{\max}^{2m})\}$. 
			\end{proofof}	
					\begin{proofof}\textit{efficiency.}
					The condition \ref{item:J_approx} guarantees 
					\begin{align*}
						M_1/M_2^2\sum_{T\in\mathcal{T}}|T|^{1/3}\sum_{F\in\mathcal{F}(T)}\Vert [D^{m}_{\mathrm{pw}}u_{\mathrm{nc}}]_F\times \nu_F\Vert_{L^2(F)}^2
						\le \min_{v\in V} \vvvert v-u_{\mathrm{nc}}\vvvert_{\mathrm{pw}}^2
						\le \vvvert u-u_{\mathrm{nc}}\vvvert_{\mathrm{pw}}^2. 
					\end{align*}
					The combination of $|T|^{1/3}\le h_T$, $\lambda_h\le\lambda$, 
					and the efficiency \eqref{eq:ControlOfDeltaUh} with  
					$\vvvert u-Iu\vvvert_{\mathrm{pw}}\le \vvvert u-u_{\mathrm{nc}}\vvvert_{\mathrm{pw}}$ from 
					\cref{cor:InterpolationOperator}.\ref{item:cor_I_orthogonality} implies  that 
					\begin{align*}
						\sum_{T\in\mathcal{T}}|T|^{2m/3}\Vert\lambda_h u_{\mathrm{nc}}\Vert_{L^2(T)}^2
							\le  \Vert h_{\mathcal{T}}^m \lambda_h u_{\mathrm{nc}}\Vert_{L^2(\Omega)}^2
							\le 2C_4^2 \big(\lambda^2h_{\max}^{2m}\Vert u-u_{\mathrm{nc}}\Vert_{L^2(\Omega)}^2
							+ \vvvert u-u_{\mathrm{nc}}\vvvert_{\mathrm{pw}}^2\big).
					\end{align*}
					\cref{thm:BoundInterpolationError}.\ref{item:BoundInterpolationError} concludes the 
					proof   with $C_{\mathrm{eff}}^2:=M_2^2/M_1+2C_4^2+2C_4^2 C_0\lambda^2h_{\max}^{2m+2\sigma}$. 
				\end{proofof}					
											
			\subsubsection{Discrete reliability}
				\begin{lemma}[distance control II]\label{lem:delta_control}
					There exists a constant $C_7>0$ such that 
					$\displaystyle
						\Vert \widehat{\lambda}_h\widehat{u}_{\mathrm{nc}}-\lambda_hu_{\mathrm{nc}}\Vert_{L^2(\Omega)}
							\hspace{-0.1em}+\hspace{-0.1em}\Vert \widehat{u}_{\mathrm{nc}}-u_{\mathrm{nc}}\Vert_{L^2(\Omega)}
								\hspace{-0.1em}+\hspace{-0.1em}\Vert \widehat{\delta} \widehat{u}_{\mathrm{nc}}\Vert_{L^2(\Omega)}
								\hspace{-0.1em}+\hspace{-0.1em}\Vert \delta u_{\mathrm{nc}}\Vert_{L^2(\Omega)}
							\hspace{-0.1em}\le\hspace{-0.1em} C_7  h_{\max}^{\sigma }\vvvert u-u_{\mathrm{nc}}\vvvert_{\mathrm{pw}}
							\hspace{-0.1em}\le\hspace{-0.1em} C_7C_{\mathrm{rel}}
							h_{\max}^{\sigma }\eta(\mathcal{T})
							$ holds for any $\mathcal{T}\in\mathbb{T}(\varepsilon_6)$.
				\end{lemma}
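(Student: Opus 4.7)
The strategy is to bound each of the four summands on the left-hand side individually by a constant multiple of $h_{\max}^{\sigma}\vvvert u-u_{\mathrm{nc}}\vvvert_{\mathrm{pw}}$; the second inequality then follows from the reliability $\vvvert u-u_{\mathrm{nc}}\vvvert_{\mathrm{pw}}\le C_{\mathrm{rel}}\eta(\mathcal{T})$ in \cref{thm:reliabilty+efficiency}. Note that $\widehat{\mathcal{T}}\in\mathbb{T}(\mathcal{T})\subset\mathbb{T}(\varepsilon_6)$, so all the coarse-level results can be applied on the fine level as well.

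For the pure $L^2$ distance I insert $u$ by the triangle inequality and apply the central estimate \eqref{eq:L2errorEnergyError} on both $\mathcal{T}$ and $\widehat{\mathcal{T}}$,
\[
\Vert \widehat{u}_{\mathrm{nc}}-u_{\mathrm{nc}}\Vert_{L^2(\Omega)}
\le \Vert u-u_{\mathrm{nc}}\Vert_{L^2(\Omega)}+\Vert u-\widehat{u}_{\mathrm{nc}}\Vert_{L^2(\Omega)}
\le C_5\,h_{\max}^{\sigma}\big(\vvvert u-Iu\vvvert_{\mathrm{pw}}+\vvvert u-\widehat{I}u\vvvert_{\mathrm{pw}}\big),
\]
and the best-approximation identity \cref{cor:InterpolationOperator}.\ref{item:cor_I_orthogonality} together with the nestedness $V(\mathcal{T})\subset V(\widehat{\mathcal{T}})$ controls both right-hand terms by $\vvvert u-u_{\mathrm{nc}}\vvvert_{\mathrm{pw}}$. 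For the two stabilization contributions, the pointwise bound $\delta\le C_\delta h_{\mathcal{T}}^{2m}$ from \cref{rem:delta} yields $\Vert \delta u_{\mathrm{nc}}\Vert_{L^2(\Omega)}\le C_\delta h_{\max}^{m}\Vert h_{\mathcal{T}}^{m}u_{\mathrm{nc}}\Vert_{L^2(\Omega)}$; the bubble-function efficiency \eqref{eq:ControlOfDeltaUh} combined with \eqref{eq:L2errorEnergyError}, $\sigma\le m$, and $h_{\max}\le 1$ then produces $\Vert\delta u_{\mathrm{nc}}\Vert_{L^2(\Omega)}\lesssim h_{\max}^{\sigma}\vvvert u-u_{\mathrm{nc}}\vvvert_{\mathrm{pw}}$, and the same argument applied on $\widehat{\mathcal{T}}$ together with $\widehat{h}_{\max}\le h_{\max}$ handles $\Vert\widehat{\delta}\widehat{u}_{\mathrm{nc}}\Vert_{L^2(\Omega)}$.

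The main obstacle is the combined residual $\Vert \widehat{\lambda}_h\widehat{u}_{\mathrm{nc}}-\lambda_hu_{\mathrm{nc}}\Vert_{L^2(\Omega)}$. The triangle inequality, $\Vert\widehat{u}_{\mathrm{nc}}\Vert_{L^2(\Omega)}=1$, and $\lambda_h\le\lambda$ reduce the task to bounding $|\widehat{\lambda}_h-\lambda_h|$ plus $\lambda$ times the $L^2$ distance already handled. Two applications of \cref{thm:BoundInterpolationError}.\ref{item:BoundInterpolationError} (the fine-level best approximation uses $u_{\mathrm{nc}}\in V(\mathcal{T})\subset V(\widehat{\mathcal{T}})$, so $\vvvert u-\widehat{I}u\vvvert_{\mathrm{pw}}\le\vvvert u-u_{\mathrm{nc}}\vvvert_{\mathrm{pw}}$) give $|\widehat{\lambda}_h-\lambda_h|\le|\lambda-\widehat{\lambda}_h|+|\lambda-\lambda_h|\le 2C_0\vvvert u-u_{\mathrm{nc}}\vvvert_{\mathrm{pw}}^2$, which is \emph{quadratic} in the energy error. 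To recover the linear bound with the superconvergence factor $h_{\max}^{\sigma}$ I absorb one copy of $\vvvert u-u_{\mathrm{nc}}\vvvert_{\mathrm{pw}}$ via the a~priori estimate
\[
\vvvert u-u_{\mathrm{nc}}\vvvert_{\mathrm{pw}}\le \sqrt{C_0}\vvvert u-Iu\vvvert_{\mathrm{pw}}\le \sqrt{C_0}\,(h_{\max}/\pi)^{\sigma}\Vert u\Vert_{H^{m+\sigma}(\Omega)},
\]
which combines \cref{thm:BoundInterpolationError}.\ref{item:BoundInterpolationError} with \cref{cor:InterpolationOperator}.\ref{item:cor_I_alpha} and the elliptic regularity \eqref{eq:def_sigma} applied to the eigenfunction $u$ (giving $\Vert u\Vert_{H^{m+\sigma}(\Omega)}\le C(\sigma)\lambda$). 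Hence $|\widehat{\lambda}_h-\lambda_h|\lesssim h_{\max}^{\sigma}\vvvert u-u_{\mathrm{nc}}\vvvert_{\mathrm{pw}}$.

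Summing the four individual estimates yields the first inequality with a constant $C_7$ depending only on $C_0,C_4,C_5,C_\delta,\kappa_m,\lambda,\sigma$, and $\Vert u\Vert_{H^{m+\sigma}(\Omega)}$; the second inequality is immediate from \cref{thm:reliabilty+efficiency}.
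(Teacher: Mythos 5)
Your proposal follows, in substance, the paper's own proof: split the left-hand side by triangle inequalities, control the $L^2$ errors and the stabilization terms through \cref{thm:BoundInterpolationError}.\ref{item:BoundInterpolationError} (equivalently \eqref{eq:L2errorEnergyError} and \eqref{eq:ControlOfDeltaUh}), bound $|\widehat{\lambda}_h-\lambda_h|$ by $|\lambda-\lambda_h|+|\lambda-\widehat{\lambda}_h|$ and trade one factor $\vvvert u-Iu\vvvert_{\mathrm{pw}}$ for $h_{\max}^{\sigma}\Vert u\Vert_{H^{m+\sigma}(\Omega)}\le C(\sigma)\lambda\, h_{\max}^{\sigma}$ via \cref{cor:InterpolationOperator}.\ref{item:cor_I_alpha} and \eqref{eq:def_sigma}, and finish with the reliability of \cref{thm:reliabilty+efficiency}. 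The minor variations (using \eqref{eq:ControlOfDeltaUh} directly for $\Vert\delta u_{\mathrm{nc}}\Vert_{L^2(\Omega)}$ instead of the $\Vert u_{\mathrm{nc}}\Vert_{\delta}$ term of the theorem) are harmless.

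One justification, used twice, must be repaired: you argue $\vvvert u-\widehat{I}u\vvvert_{\mathrm{pw}}\le\vvvert u-u_{\mathrm{nc}}\vvvert_{\mathrm{pw}}$ from the nestedness $V(\mathcal{T})\subset V(\widehat{\mathcal{T}})$. This inclusion is false for the Crouzeix-Raviart and Morley spaces: a coarse function jumps across a coarse face with vanishing integral mean over that face, but the means over its fine sub-faces (resp.\ sub-edges) need not vanish, so it violates the fine continuity constraints; nonconforming spaces are not nested under refinement. The inequality you need is nevertheless true, and it is exactly how the paper argues: by \ref{item:I_Pi0}, $\vvvert u-\widehat{I}u\vvvert_{\mathrm{pw}}=\Vert(1-\widehat{\Pi}_0)D^m u\Vert_{L^2(\Omega)}\le\Vert(1-\Pi_0)D^m u\Vert_{L^2(\Omega)}=\vvvert u-Iu\vvvert_{\mathrm{pw}}$ because $P_0(\mathcal{T})\subset P_0(\widehat{\mathcal{T}})$, and \cref{cor:InterpolationOperator}.\ref{item:cor_I_orthogonality} gives $\vvvert u-Iu\vvvert_{\mathrm{pw}}\le\vvvert u-u_{\mathrm{nc}}\vvvert_{\mathrm{pw}}$. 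With this replacement in both places where you invoke nestedness (the $L^2$-distance bound and the eigenvalue-difference bound, as well as the fine-level application of \eqref{eq:ControlOfDeltaUh}), your proof is complete and coincides with the paper's.
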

					\begin{proof}
						Triangle inequalities and the normalization $\Vert u\Vert_{L^2(\Omega)}=1$ show
						\begin{align*}
							\Vert \widehat{\lambda}_h\widehat{u}_{\mathrm{nc}}-\lambda_h u_{\mathrm{nc}}\Vert_{L^2(\Omega)}
							\le& \lambda_h \Vert u-u_{\mathrm{nc}}\Vert_{L^2(\Omega)} 
							+\widehat{\lambda}_h  \Vert u-\widehat{u}_{\mathrm{nc}}\Vert_{L^2(\Omega)}
										+\vert \widehat{\lambda}_h-\lambda_h\vert.
						\end{align*} 
						\cref{thm:BoundInterpolationError}.\ref{item:BoundInterpolationError}  and \cref{cor:InterpolationOperator}.\ref{item:cor_I_alpha} 
						imply $\vert \lambda-\lambda_h\vert 
								 \le C_0\vvvert u-Iu \vvvert_{\mathrm{pw}}^2\le C_0(h_{\max}/\pi)^{2\sigma}\Vert u\Vert_{H^{m+\sigma}(\Omega)}^2$. 
						Since the eigenfunction $u\in V$ in \eqref{eq:contEVP} 
						solves the source problem with right-hand side $\lambda u\in L^2(\Omega)$, \eqref{eq:def_sigma} implies 
						 $\Vert u\Vert_{H^{m+\sigma}(\Omega)}\le C(\sigma) \Vert \lambda u\Vert_{L^2(\Omega)}=C(\sigma)\lambda$. 
						The same arguments apply to $\vert \lambda-\widehat{\lambda}_h\vert$. 
						This and 
						$\widehat{h}_{\max}^{\sigma} \vvvert u-\widehat{I}u \vvvert_{\mathrm{pw}}\le h_{\max}^{\sigma }\vvvert u-Iu \vvvert_{\mathrm{pw}}$ 
						result in 
						\begin{align*}
								\vert \widehat{\lambda}_h-\lambda_h\vert \le \vert \lambda-\lambda_h\vert +\vert \lambda-\widehat{\lambda}_h\vert
									 \le 
									 2C_0C(\sigma)\lambda/\pi^{\sigma} h_{\max}^{\sigma }\vvvert u-Iu \vvvert_{\mathrm{pw}}. 
						\end{align*}
						Recall $\lambda_h,\widehat{\lambda}_h\le\lambda$, 
						$\Vert \delta u_{\mathrm{nc}}\Vert_{L^2(\Omega)}\le C_{\delta} h_{\max}^m\Vert u_{\mathrm{nc}}\Vert_{\delta}$, and 
						$ \Vert \widehat{\delta} \widehat{u}_{\mathrm{nc}}\Vert_{L^2(\Omega)}
						\le C_{\delta} \widehat{h}_{\max}^m\Vert \widehat{u}_{\mathrm{nc}}\Vert_{\widehat{\delta}}$ from \cref{tab:2level}. 
						The last two displayed estimates, a triangle inequality, and 
						 \cref{thm:BoundInterpolationError}.\ref{item:BoundInterpolationError}
						show 
						\begin{align*}
							\Vert \widehat{\lambda}_h\widehat{u}_{\mathrm{nc}}-\lambda_hu_{\mathrm{nc}}\Vert_{L^2(\Omega)}
											&+ \Vert \widehat{u}_{\mathrm{nc}}-u_{\mathrm{nc}}\Vert_{L^2(\Omega)}
											+\Vert \widehat{\delta} \widehat{u}_{\mathrm{nc}}\Vert_{L^2(\Omega)}
											+\Vert \delta u_{\mathrm{nc}}\Vert_{L^2(\Omega)}
											\\
								 &\le 
								2\big( (C_0C(\sigma)\lambda/\pi^{\sigma}+C_0^{1/2}(1+\lambda)) h_{\max}^{\sigma}+C_{\delta}C_0^{1/2} h_{\max}^{m}\big)
									\vvvert u-Iu\vvvert_{\mathrm{pw}}
						\end{align*}
						with $\vvvert u-\widehat{I}u \vvvert_{\mathrm{pw}}\le \vvvert u-Iu \vvvert_{\mathrm{pw}}$ 
						and $\widehat{h}_{\max}\le h_{\max}$. 
						Since $h_{\max}\le\varepsilon_6<1$ 
						and  $1/2<\sigma\le 1\le m$, \cref{cor:InterpolationOperator}.\ref{item:cor_I_orthogonality}
						concludes the proof of the first bound in \cref{lem:delta_control} with 
						$C_7:=2C_0C(\sigma)\lambda/\pi^{\sigma}+2C_0^{1/2}(1+\lambda+C_{\delta})$.  The second claim follows from 
						\cref{thm:reliabilty+efficiency}. 
					\end{proof}
					
\begin{theorem}[discrete reliability]\label{thm:A34EVP}
					There exist  constants $\Lambda_3, \, M_3>0$ 
					such that $\mathcal{T}\in\mathbb{T}(\varepsilon_6)$ with maximal mesh-size $h_{\max}\le\varepsilon_6$ ($\varepsilon_6$ 
from \cref{thm:reliabilty+efficiency})   and $\epsilon_3:=M_3{h}_{\max}^{2\sigma}$
imply 
					\begin{enumerate}[label=\textup{({A${\arabic*}$})}, leftmargin=3.5em]
						\item[\mylabel{A3DiscreteReliabilty_new}{\textup{({A${3_{\varepsilon}}$})}}]Discrete reliability. 
					 		$\displaystyle
					  		\delta^2(\mathcal{T},\widehat{\mathcal{T}}) \leq \Lambda_3 \eta^2(\mathcal{R}_1)
					  		+\epsilon_3\eta^2(\mathcal{T}).
					 		$
					 \end{enumerate}
				\end{theorem}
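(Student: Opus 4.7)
The plan is to split $\delta^2(\mathcal{T},\widehat{\mathcal{T}})$ from \eqref{eq:def_dist_delta} into the energy-type error $\vvvert e\vvvert^2_{\mathrm{pw}}$ with $e:=\widehat{u}_{\mathrm{nc}}-u_{\mathrm{nc}}$ and the $L^2$-residual $\Vert \lambda_h u_{\mathrm{nc}}-\widehat{\lambda}_h\widehat{u}_{\mathrm{nc}}\Vert^2_{L^2(\Omega)}$, and to combine \cref{lem:TowardsDisRel} for the former with the small-mesh-size bound of \cref{lem:delta_control} for all the lower-order $L^2$ contributions.

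First, I would apply \cref{lem:TowardsDisRel} to $\vvvert e\vvvert^2_{\mathrm{pw}}$ and treat each of the five terms on its right-hand side separately. The conforming-companion term $\Vert D^m_{\mathrm{pw}}(u_{\mathrm{nc}}-Ju_{\mathrm{nc}})\Vert^2_{L^2(\mathcal{T}\setminus\widehat{\mathcal{T}})}$ is controlled by the localized estimate of \cref{rem:J2_wanted} as $\le M_5\eta^2(\mathcal{R}_1)$, exploiting that $\mathcal{R}_1$ is precisely the set of coarse-but-not-fine tetrahedra together with one adjacent layer. The volume-residual term $\Vert h^m_{\mathcal{T}}\lambda_h u_{\mathrm{nc}}\Vert^2_{L^2(\mathcal{T}\setminus\widehat{\mathcal{T}})}$ is bounded via shape-regularity $h_T\le C_{\mathrm{sr}}|T|^{1/3}$ and the definition \eqref{eq:def_eta} of $\eta^2$ by $C_{\mathrm{sr}}^{2m}\eta^2(\mathcal{T}\setminus\widehat{\mathcal{T}})\le C_{\mathrm{sr}}^{2m}\eta^2(\mathcal{R}_1)$. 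Together these two produce the $\Lambda_3\eta^2(\mathcal{R}_1)$ piece with, e.g., $\Lambda_3:=C_6(M_5+C_{\mathrm{sr}}^{2m})$.

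The three remaining terms of \cref{lem:TowardsDisRel}, namely $\Vert e\Vert^2_{L^2(\Omega)}$, $\Vert\delta u_{\mathrm{nc}}\Vert^2_{L^2(\Omega)}$, and $\Vert\widehat{\delta}\widehat{u}_{\mathrm{nc}}\Vert^2_{L^2(\Omega)}$, as well as the outstanding $L^2$-piece $\Vert\lambda_h u_{\mathrm{nc}}-\widehat{\lambda}_h\widehat{u}_{\mathrm{nc}}\Vert^2_{L^2(\Omega)}$ of $\delta^2(\mathcal{T},\widehat{\mathcal{T}})$, are all individually bounded by $(C_7C_{\mathrm{rel}})^2 h_{\max}^{2\sigma}\eta^2(\mathcal{T})$ via \cref{lem:delta_control} (which is itself a consequence of reliability from \cref{thm:reliabilty+efficiency}). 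Summing these four contributions produces a bound of the form $M_3 h_{\max}^{2\sigma}\eta^2(\mathcal{T})$ with, e.g., $M_3:=(3C_6+1)(C_7C_{\mathrm{rel}})^2$, which identifies the perturbation $\epsilon_3:=M_3 h_{\max}^{2\sigma}$ and completes the proof.

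The main conceptual obstacle, and the reason this has to be stated as a weakened discrete reliability \ref{A3DiscreteReliabilty_new} rather than the classical version, is that the stabilization generates the $\delta$- and $\widehat{\delta}$-weighted terms in \cref{lem:TowardsDisRel}, which cannot be localized to $\mathcal{R}_1$ or to any superset of $\mathcal{T}\setminus\widehat{\mathcal{T}}$. The medius analysis underlying \cref{lem:delta_control} (and ultimately \eqref{eq:L2errorEnergyError}) trades these terms, and the $L^2$-error $\Vert e\Vert^2_{L^2(\Omega)}$, for an $h_{\max}^{2\sigma}$-smallness factor against the full estimator $\eta(\mathcal{T})$. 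This new perturbation term $\epsilon_3\eta^2(\mathcal{T})$ is the novel feature compared to \cite{CFPP14,CR16}, and its smallness for $h_{\max}\le\varepsilon_6$ is exactly what the extended axiomatic framework in \cref{sec:Appendix} is designed to accommodate while still yielding optimal rates.
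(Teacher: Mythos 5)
Your proposal is correct and follows essentially the same route as the paper: Lemma~\ref{lem:TowardsDisRel} for $\vvvert e\vvvert_{\mathrm{pw}}^2$, Remark~\ref{rem:J2_wanted} plus shape-regularity to absorb the companion and volume terms into $\eta^2(\mathcal{R}_1)$, and Lemma~\ref{lem:delta_control} (hence reliability) to bound the remaining $L^2$- and $\delta$-weighted terms by $h_{\max}^{2\sigma}\eta^2(\mathcal{T})$. Only your bookkeeping of $M_3$ differs slightly (the paper takes $M_3:=C_7^2C_{\mathrm{rel}}^2\max\{1,C_6\}$ by estimating the sum of squares through the squared sum), which is immaterial.
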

				\begin{proof}
						Recall that \cref{lem:TowardsDisRel} shows 
						\begin{align*}
							C_6^{-1}\vvvert \widehat{u}_{\mathrm{nc}}-u_{\mathrm{nc}}\vvvert_{\mathrm{pw}}^2
							\le&\Vert D^{m}_{\mathrm{pw}} (u_{\mathrm{nc}}-Ju_{\mathrm{nc}})\Vert_{L^2(\mathcal{T}\setminus\widehat{\mathcal{T}})}^2
										+\Vert h_{\mathcal{T}}^{m}\lambda_hu_{\mathrm{nc}}\Vert_{L^2(\mathcal{T}\setminus \widehat{\mathcal{T}})}^2
										+ \Vert \widehat{u}_{\mathrm{nc}}-u_{\mathrm{nc}}\Vert_{L^2(\Omega)}^2\\
										&+\Vert \widehat{\delta}\widehat{ u}_h\Vert_{L^2(\Omega)}^2
											+\Vert \delta u_{\mathrm{nc}}\Vert_{L^2(\Omega)}^2.
						\end{align*}
						 This and \cref{lem:delta_control} lead with $M_3:=C_7^2 C_{\mathrm{rel}}^2\max\{1,C_6\}$ to 
						\begin{align*}
							\delta^2(\mathcal{T},\widehat{\mathcal{T}})
								=&\Vert \widehat{\lambda}_h\widehat{u}_{\mathrm{nc}}-\lambda_h u_{\mathrm{nc}}\Vert_{L^2(\Omega)}^2
								+\vvvert \widehat{u}_{\mathrm{nc}}-u_{\mathrm{nc}}\vvvert_{\mathrm{pw}}^2
								\\
								\le&
								C_6\Vert D^{m}_{\mathrm{pw}} (u_{\mathrm{nc}}-Ju_{\mathrm{nc}})\Vert_{L^2(\mathcal{T}\setminus\widehat{\mathcal{T}})}^2
								+C_6\Vert h_{\mathcal{T}}^{m}\lambda_hu_{\mathrm{nc}}\Vert_{L^2(\mathcal{T}\setminus \widehat{\mathcal{T}})}^2
								+ 
								M_3h_{\max}^{2\sigma}\eta^2(\mathcal{T}).
						\end{align*}	
						The shape regularity $h_T\le C_{\mathrm{sr}}|T|^{1/3}$  for any $T\in\mathcal{T}\in\mathbb{T}$  
						guarantees 
\[
						\Vert h_{\mathcal{T}}^{m}\lambda_hu_{\mathrm{nc}}\Vert_{L^2(\mathcal{T}\setminus \widehat{\mathcal{T}})}
						\le  C_{\mathrm{sr}} ^{m} |T|^{m/3}\Vert \lambda_hu_{\mathrm{nc}}\Vert_{L^2(\mathcal{T}\setminus \widehat{\mathcal{T}})}
						\le C_{\mathrm{sr}} ^{m} \eta(\mathcal{T}\setminus\widehat{\mathcal{T}})\le C_{\mathrm{sr}} ^{m} \eta(\mathcal{R}_1). 
\]
with 		$\mathcal{T}\setminus\widehat{\mathcal{T}}\subset \mathcal{R}_1$	in the last step.		\cref{rem:J2_wanted}   asserts 
\begin{align*}
							M_5^{-1}\Vert D^{m}_{\mathrm{pw}} (u_{\mathrm{nc}}-Ju_{\mathrm{nc}})\Vert_{L^2(\mathcal{T}\setminus\widehat{\mathcal{T}})}^2
								\le  \sum_{T\in\mathcal{R}_1}|T|^{1/3}\sum_{F\in\mathcal{F}(T)}
								\Vert [D^{m}_{\mathrm{pw}} u_{\mathrm{nc}}]_F\times \nu_F\Vert_{L^2(F)}^2\le \eta^2(\mathcal{R}_1).
\end{align*}
The combination of the last three displayed inequalities 
 concludes the proof of \ref{A3DiscreteReliabilty_new} with 
						$\Lambda_3:=C_6(C_{\mathrm{sr}}^{2m}+ M_5)$. 
				\end{proof}
				
\subsection{Quasiorthogonality}\label{sec:proofofA4}
The quasiorthogonality in \cref{thm:A44EVP} below concerns the outcome  $(\mathcal{T}_j)_{j\in\mathbb{N}_0}$ of 
				\namecref{alg:AFEM4EVP}. Let $u_j\in V(\mathcal{T}_j)$ 
				abbreviate the nonconforming component of the discrete solution  
				$\boldsymbol{u}_j=(u_{\mathrm{pw}}, u_\mathrm{nc})=:(u_{\mathrm{pw}}, u_j)\in P_m(\mathcal{T}_j)\times V(\mathcal{T}_j)$ 
				and $\lambda_{j}(k)\le\lambda$ the associated eigenvalue from \namecref{alg:AFEM4EVP} on the level $j\in\mathbb{N}_0$. 
				 Recall the distance 
				 $$\delta^2(\mathcal{T}_j,\mathcal{T}_{j+1})=\Vert \lambda_j(k)u_j-\lambda_{j+1}(k)u_{j+1}\Vert_{L^2(\Omega)}^2+\vvvert u_j-u_{j+1}\vvvert_{\mathrm{pw}}^2$$ 
				for the triangulations $\mathcal{T}_j$ and $\mathcal{T}_{j+1}$. 
				Set $h_0:=\max_{T\in\mathcal{T}_0}h_T$ and recall $\varepsilon_6>0$ from \cref{thm:reliabilty+efficiency}. 
				
\begin{theorem}[quasiorthogonality]\label{thm:A44EVP}
				For  any $0<\beta\le C_{\mathrm{eff}}^2/C_{\mathrm{rel}}^2$,  
				there exist $\Lambda_4$, $\widetilde{\Lambda}_4$, and 
				$\epsilon_4:=\widetilde{\Lambda}_4(\beta+h_0^{2\sigma}(1+\beta^{-1}))>0$, 
				such that $\mathcal{T}_0\in\mathbb{T}(\varepsilon_6)$ implies that 
				 the output $(\eta_j)_{j\in\mathbb{N}_0}$ and $(\mathcal{T}_j)_{j\in\mathbb{N}_0}$ of \namecref{alg:AFEM4EVP} satisfies  
				\begin{enumerate}[label={({A${\arabic*}$})}, leftmargin=3.5em]
						\item[\mylabel{A4epsilon_Quasiothogonality}{\textup{({A${4_{\varepsilon}}$})}}]Quasiorthogonality. 
							$\displaystyle
										\sum_{j=\ell}^{\ell+L}\delta^2(\mathcal{T}_j,\mathcal{T}_{j+1}) \leq \Lambda_{4}(1+\beta^{-1}) \eta^2_\ell
											+\epsilon_4 \sum_{j=\ell}^{\ell+L}\eta_j^2\quad\text{ for any }\ell,L\in\mathbb{N}_0
										.
							$
				\end{enumerate}
\end{theorem}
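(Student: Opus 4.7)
I decompose each distance
$\delta^2(\mathcal{T}_j,\mathcal{T}_{j+1}) = \Vert \lambda_j(k) u_j - \lambda_{j+1}(k) u_{j+1}\Vert_{L^2(\Omega)}^2 + \vvvert u_j - u_{j+1}\vvvert_{\mathrm{pw}}^2$
into energy and $L^2$ components. The energy contribution will be telescoped by a Young–Pythagoras identity, while the $L^2$ contribution enters only as a higher-order perturbation giving the $h_0^{2\sigma}$-part of $\epsilon_4$.

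For the energy part, start from the identity
$\vvvert u_j - u_{j+1}\vvvert_{\mathrm{pw}}^2 = \vvvert u - u_j\vvvert_{\mathrm{pw}}^2 - \vvvert u - u_{j+1}\vvvert_{\mathrm{pw}}^2 + 2\, a_{\mathrm{pw}}(u_{j+1} - u_j,\, u_{j+1} - u)$
and sum from $j=\ell$ to $\ell+L$. The telescoping leaves the leading term
$\vvvert u - u_\ell\vvvert_{\mathrm{pw}}^2 \le C_{\mathrm{rel}}^2 \eta_\ell^2$
from \cref{thm:reliabilty+efficiency} plus the total cross term. To bound each cross term, split
$u_{j+1} - u_j = (u_{j+1} - \widehat{I}_{j+1} u_j) + (\widehat{I}_{j+1} u_j - u_j)$,
where $\widehat{I}_{j+1}$ is the interpolation onto $V(\mathcal{T}_{j+1})$. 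The first summand $w \in V(\mathcal{T}_{j+1})$ is a legitimate test in the rational discrete EVP \eqref{eq:disEVP}, and testing the continuous EVP with $Jw \in V$ produces, modulo the eigenvalue gap $\lambda - \lambda_{j+1}(k)$ and the stabilization $\delta_{j+1}$, a clean pairing that is controlled by \cref{cor:ConformingCompanion} and \cref{thm:BoundInterpolationError}.\ref{item:BoundInterpolationError} (yielding a factor $O(h_j^\sigma)$). The second summand vanishes outside the set $\mathcal{R}_1$ by \ref{item:I_identity}, so its energy is controlled by the jump terms of $D^m_{\mathrm{pw}} u_j$ restricted to $\mathcal{R}_1\subset \mathcal{T}_j$, hence by $\eta_j^2$ localized to the refined layer.

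For the $L^2$ contribution, a triangle inequality and repeated application of the bounds used in \cref{lem:delta_control} give
$\Vert \lambda_j(k) u_j - \lambda_{j+1}(k) u_{j+1}\Vert_{L^2(\Omega)}^2 \lesssim h_j^{2\sigma} \bigl(\vvvert u - u_j\vvvert_{\mathrm{pw}}^2 + \vvvert u - u_{j+1}\vvvert_{\mathrm{pw}}^2\bigr)$,
and then reliability turns each factor on the right into $\eta_j^2$ or $\eta_{j+1}^2$, producing a total contribution $\lesssim h_0^{2\sigma} \sum_j \eta_j^2$. This already accounts for the $h_0^{2\sigma}$ piece of $\epsilon_4$; at this stage no $\beta$-splitting is needed.

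\textbf{Main obstacle.} The hardest step is the treatment of the energy cross term
$\sum_j a_{\mathrm{pw}}(u_{j+1} - u_j, u_{j+1} - u)$: Galerkin orthogonality is broken by nonconformity, by nonnestedness of the spaces $V(\mathcal{T}_j) \not\subset V(\mathcal{T}_{j+1})$, and by the rational stabilization. A weighted Young inequality with the free parameter $\beta$ is crucial: it yields one summand $\beta^{-1}\vvvert u - u_\ell\vvvert_{\mathrm{pw}}^2 \le \beta^{-1} C_{\mathrm{rel}}^2 \eta_\ell^2$ (absorbed into $\Lambda_4(1+\beta^{-1})\eta_\ell^2$), together with a remainder of the form $\beta \sum_j \vvvert u_j - u_{j+1}\vvvert_{\mathrm{pw}}^2$ that mixes back with the left-hand side; the constraint $\beta \le C_{\mathrm{eff}}^2/C_{\mathrm{rel}}^2$, combined with the efficiency $C_{\mathrm{eff}}^{-1}\eta_j \le \vvvert u - u_j\vvvert_{\mathrm{pw}}$, guarantees that this remainder is dominated by $\beta \sum_j \eta_j^2$ and can be lumped into $\epsilon_4$. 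Together with the $h_0^{2\sigma}$-terms above, one obtains exactly $\epsilon_4 = \widetilde{\Lambda}_4\bigl(\beta + h_0^{2\sigma}(1+\beta^{-1})\bigr)$, closing the estimate.
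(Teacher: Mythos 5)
Your overall skeleton (split $\delta^2$ into $L^2$ and energy parts, handle the $L^2$ part by \cref{lem:delta_control}-type bounds to get the $h_0^{2\sigma}$-contribution, expand $\vvvert u_j-u_{j+1}\vvvert_{\mathrm{pw}}^2$ by the polarization identity, telescope, and use a $\beta$-weighted Young inequality together with $\beta\le C_{\mathrm{eff}}^2/C_{\mathrm{rel}}^2$ and \cref{thm:reliabilty+efficiency}) is the same as the paper's. The genuine gap is in your treatment of the refined-region part of the cross term. After your splitting, the piece supported on $\mathcal{T}_j\setminus\mathcal{T}_{j+1}$ is, as you say, ``controlled by $\eta_j^2$ localized to the refined layer'' --- but with an $O(1)$ constant, no factor $\beta$ and no factor $h_0^{2\sigma}$. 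Summing over $j=\ell,\dots,\ell+L$ then produces a term of the form $C\sum_{j}\eta_j^2$ with a fixed constant $C$, which cannot be absorbed into $\Lambda_4(1+\beta^{-1})\eta_\ell^2$ (that term carries no sum) nor into $\epsilon_4\sum_j\eta_j^2$ (since $\epsilon_4=\widetilde\Lambda_4(\beta+h_0^{2\sigma}(1+\beta^{-1}))$ must be arbitrarily small; indeed the optimality framework needs $\epsilon_4<(1-\rho_{12})/\Lambda_{12}$). Your proposal never explains how this accumulated, non-small contribution is made uniform in $L$, and localized estimator contributions on the refined sets do \emph{not} telescope, because $u_j$ and $\lambda_j(k)$ change with $j$.

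The missing idea is precisely the paper's two-step device. First, the $2$-level bound (\cref{lem:towardsQuasiOptimality}) is arranged so that the only non-$h^\sigma$-small contribution involves the \emph{fixed} eigenfunction, namely $\Vert h_{\mathcal{T}_j}^{m}\lambda u\Vert_{L^2(\mathcal{T}_j\setminus\mathcal{T}_{j+1})}$ (the replacement of $\lambda_j u_j$ by $\lambda u$ costs only $h^{\sigma}$-perturbations via \eqref{eq:L2errorEnergyError} and \cref{lem:delta_control}). Second, since every $T\in\mathcal{T}_j\setminus\mathcal{T}_{j+1}$ is bisected, the mesh-size function satisfies $\tilde h_{j+1}\le 2^{-1/3}\tilde h_j$ there, whence $\tilde h_j^{2m}\lesssim \tilde h_j^{2m}-\tilde h_{j+1}^{2m}$ on the refined set and
\begin{align*}
\sum_{j=\ell}^{\ell+L}\Vert h_{\mathcal{T}_j}^{m}\lambda u\Vert^2_{L^2(\mathcal{T}_j\setminus\mathcal{T}_{j+1})}
\lesssim \int_\Omega\big(\tilde h_\ell^{2m}-\tilde h_{\ell+L+1}^{2m}\big)(\lambda u)^2\,\textup{d}x
\le \Vert \tilde h_\ell^{m}\lambda u\Vert^2_{L^2(\Omega)}\lesssim \eta_\ell^2,
\end{align*}
uniformly in $L$; only this collapses the accumulated refined-region terms into the single $\Lambda_4(1+\beta^{-1})\eta_\ell^2$. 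Without it your estimate does not close. A secondary caveat: your splitting applies the fine interpolation $\widehat I_{j+1}$ directly to the coarse nonconforming function $u_j$, which lies outside the stated domain $V+V(\widehat{\mathcal{T}})$ of the interpolation axioms \ref{item:I_kappa}--\ref{item:I_kappa_disc}; the paper avoids this by only ever applying $\widehat I$ to $V$-functions or to $J$-postprocessed (conforming) functions.
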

				
				The following 
				\cref{lem:towardsQuasiOptimality} in the $2$-level notation of \cref{def:2-level} prepares the proof of \cref{thm:A44EVP} below. 
				
\begin{lemma}[$2$-level quasiorthogonality]\label{lem:towardsQuasiOptimality}
					There exists  $C_{\mathrm{qo}}>0$ such that,  for $\mathcal{T}\in\mathbb{T}(\varepsilon_6)$, \\ 
					\hspace*{4mm} $\displaystyle
						{ a_{\mathrm{pw}}(u-\widehat{u}_{\mathrm{nc}}, u_{\mathrm{nc}}-\widehat{u}_{\mathrm{nc}})}
						\le C_{\mathrm{qo}}\big(h_{\max}^{\sigma }
									\vvvert u-{u}_{\mathrm{nc}}\vvvert_{\mathrm{pw}}
									+\Vert h_{\mathcal{T}}^{m}\lambda u\Vert_{L^2(\mathcal{T}\setminus\widehat{\mathcal{T}})}\big) 
									{\vvvert u-\widehat{u}_{\mathrm{nc}}\vvvert_{\mathrm{pw}}}
					$ \hfill holds.  
\end{lemma}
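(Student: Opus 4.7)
The starting point is that $D_{\mathrm{pw}}^m u_{\mathrm{nc}}$ is piecewise constant on $\mathcal{T}$ and hence on the refinement $\widehat{\mathcal{T}}$, so \ref{item:I_Pi0} applied on the fine mesh yields $D_{\mathrm{pw}}^m\widehat{I}u_{\mathrm{nc}} = D_{\mathrm{pw}}^m u_{\mathrm{nc}}$. With $e := u_{\mathrm{nc}}-\widehat{u}_{\mathrm{nc}}$, the remainder $(1-\widehat{I})e = (1-\widehat{I})u_{\mathrm{nc}}$ carries zero piecewise derivative and, by \ref{item:I_identity}, is supported in $\mathcal{T}\setminus\widehat{\mathcal{T}}$. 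The bilinear form therefore collapses to
\[
a_{\mathrm{pw}}(u-\widehat{u}_{\mathrm{nc}},e) = a_{\mathrm{pw}}(u-\widehat{u}_{\mathrm{nc}},\widehat{I}e),
\]
with $\widehat{I}e = \widehat{I}u_{\mathrm{nc}}-\widehat{u}_{\mathrm{nc}}\in V(\widehat{\mathcal{T}})$; the support statement already earmarks $\mathcal{T}\setminus\widehat{\mathcal{T}}$ as the origin of the localized term in the claim.

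Next I would introduce the fine companion $\phi := \widehat{J}\widehat{I}e \in V$ and combine the continuous EVP $a(u,\phi) = \lambda b(u,\phi)$, the fine discrete EVP $a_{\mathrm{pw}}(\widehat{u}_{\mathrm{nc}},\widehat{I}e) = \widehat{\lambda}_h(\widehat{u}_{\mathrm{nc}},\widehat{I}e)_{1+\widehat{\delta}}$, and the right-inverse identity $\widehat{I}\phi = \widehat{I}\widehat{J}\widehat{I}e = \widehat{I}e$ from \ref{item:J_rightInverse}. Inserting $\pm\widehat{I}e$ produces the four-term decomposition
\[
a_{\mathrm{pw}}(u-\widehat{u}_{\mathrm{nc}},\widehat{I}e) = a_{\mathrm{pw}}(u-\widehat{u}_{\mathrm{nc}},\widehat{I}e-\phi) - \lambda b(u,\widehat{I}e-\phi) + b(\lambda u-\widehat{\lambda}_h\widehat{u}_{\mathrm{nc}},\widehat{I}e) - \widehat{\lambda}_h b(\widehat{\delta}\widehat{u}_{\mathrm{nc}},\widehat{I}e).
\]

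Each summand receives a Cauchy--Schwarz bound. For the first, since $a_{\mathrm{pw}}(\widehat{I}u,\widehat{I}e-\phi) = 0$ via \ref{item:I_Pi0} combined with $\widehat{I}\phi = \widehat{I}e$, one may replace $u-\widehat{u}_{\mathrm{nc}}$ by $u-\widehat{I}u$ and apply \cref{cor:InterpolationOperator}.\ref{item:cor_I_alpha} plus the elliptic regularity $\Vert u\Vert_{H^{m+\sigma}(\Omega)}\le C(\sigma)\lambda$ to extract the $h_{\max}^\sigma$ factor, while \ref{item:J_approx} on $\widehat{\mathcal{T}}$ controls $\vvvert\widehat{I}e-\phi\vvvert_{\mathrm{pw}}$ by the jumps of $D^m_{\mathrm{pw}}(u_{\mathrm{nc}}-\widehat{u}_{\mathrm{nc}})$, whose coarse-face contribution matches $\vvvert u-u_{\mathrm{nc}}\vvvert$ via coarse efficiency while the fine-interior part is absorbed in $\vvvert u-\widehat{u}_{\mathrm{nc}}\vvvert$. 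The second term uses the $L^2$-orthogonality \ref{item:J_orthogonality} to replace $u$ by $u-\widehat{I}u$; then \ref{item:I_kappa} together with the sharp local companion defect of \cref{rem:J2_wanted}---which localizes $(1-\widehat{J})\widehat{I}e$ to $\mathcal{R}_1$ and, via $\widehat{I}u_{\mathrm{nc}} = u_{\mathrm{nc}}$ off $\mathcal{T}\setminus\widehat{\mathcal{T}}$, to that transition region---produces the $\Vert h_{\mathcal{T}}^m\lambda u\Vert_{L^2(\mathcal{T}\setminus\widehat{\mathcal{T}})}$ factor. The third and fourth summands are handled by \cref{lem:delta_control}'s bound $\Vert\lambda u-\widehat{\lambda}_h\widehat{u}_{\mathrm{nc}}\Vert_{L^2(\Omega)}+\Vert\widehat{\delta}\widehat{u}_{\mathrm{nc}}\Vert_{L^2(\Omega)}\lesssim h_{\max}^\sigma\vvvert u-u_{\mathrm{nc}}\vvvert_{\mathrm{pw}}$ paired with the discrete Friedrichs estimate \eqref{eq:disFriedrich} for $\Vert\widehat{I}e\Vert_{L^2(\Omega)}\lesssim \vvvert u-\widehat{u}_{\mathrm{nc}}\vvvert_{\mathrm{pw}}$.

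The main obstacle is reconciling the additive estimates produced by naive Cauchy--Schwarz (which tend to yield $h_{\max}^\sigma(\vvvert u-u_{\mathrm{nc}}\vvvert + \vvvert u-\widehat{u}_{\mathrm{nc}}\vvvert)$) with the multiplicative target, so that every cross-error term ultimately carries either a $h_{\max}^\sigma$ coefficient or is restricted to $L^2(\mathcal{T}\setminus\widehat{\mathcal{T}})$. This requires invoking the medius bounds of \cref{thm:BoundInterpolationError} to trade a $\vvvert u-u_{\mathrm{nc}}\vvvert$ factor for $\vvvert u-Iu\vvvert$ and the sharp localization \cref{rem:J2_wanted} in place of the global efficiency \ref{item:J_approx}; collecting all constants from \cref{cor:InterpolationOperator,cor:ConformingCompanion,lem:delta_control} and \eqref{eq:disFriedrich} then yields an admissible $C_{\mathrm{qo}}$ depending only on shape regularity, $\lambda$, and the universal constants $\kappa_m$, $M_1,\ldots,M_5$.
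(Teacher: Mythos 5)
Your route is genuinely different from the paper's (the paper's proof of \cref{lem:towardsQuasiOptimality} never invokes a conforming companion nor the continuous eigenvalue problem), but it has two gaps. The first is structural within the framework: the object $\widehat{I}e$ with $e=u_{\mathrm{nc}}-\widehat{u}_{\mathrm{nc}}$ is not defined, since the fine interpolation $\widehat{I}$ is only defined on $V+V(\widetilde{\mathcal{T}})$ for refinements $\widetilde{\mathcal{T}}\in\mathbb{T}(\widehat{\mathcal{T}})$, and a coarse Crouzeix--Raviart or Morley function $u_{\mathrm{nc}}\in V(\mathcal{T})$ does in general \emph{not} belong to $V+V(\widehat{\mathcal{T}})$ (its jumps across a refined coarse face have vanishing mean only with respect to the coarse face, not the fine sub-faces). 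Consequently \ref{item:I_Pi0} cannot be quoted for $\widehat{I}u_{\mathrm{nc}}$, and \ref{item:I_identity} is stated in the opposite direction (coarse interpolation of fine functions), so the claimed support of $(1-\widehat{I})u_{\mathrm{nc}}$ in $\mathcal{T}\setminus\widehat{\mathcal{T}}$ is not covered by the axioms. The paper avoids this by applying $I$ and $\widehat{I}$ only to $u-\widehat{u}_{\mathrm{nc}}\in V+V(\widehat{\mathcal{T}})$ (and to conforming functions such as $Ju_{\mathrm{nc}}$).

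The second gap is quantitative and is the one that breaks the argument. In your first summand you bound $\vvvert u-\widehat{I}u\vvvert_{\mathrm{pw}}\lesssim h_{\max}^{\sigma}\lambda$ by elliptic regularity, while \ref{item:J_approx} for $\widehat{J}$ controls $\vvvert(1-\widehat{J})\widehat{I}e\vvvert_{\mathrm{pw}}$ only by the fine jumps of $D^m_{\mathrm{pw}}(u_{\mathrm{nc}}-\widehat{u}_{\mathrm{nc}})$, i.e.\ by $\vvvert u-u_{\mathrm{nc}}\vvvert_{\mathrm{pw}}+\vvvert u-\widehat{u}_{\mathrm{nc}}\vvvert_{\mathrm{pw}}$; these jumps are supported on all of $\widehat{\mathcal{T}}$, so \cref{rem:J2_wanted}, which concerns the \emph{coarse} companion $J$ on $\mathcal{T}\setminus\widehat{\mathcal{T}}$, does not localize them and does not produce $\Vert h_{\mathcal{T}}^{m}\lambda u\Vert_{L^2(\mathcal{T}\setminus\widehat{\mathcal{T}})}$. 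The resulting bound $h_{\max}^{\sigma}\lambda\,(\vvvert u-u_{\mathrm{nc}}\vvvert_{\mathrm{pw}}+\vvvert u-\widehat{u}_{\mathrm{nc}}\vvvert_{\mathrm{pw}})$ is not of the product form asserted in the lemma, and it is too weak for its purpose: in \cref{thm:A44EVP} the piece $h_0^{\sigma}\lambda\vvvert u-u_{j+1}\vvvert_{\mathrm{pw}}$ is linear rather than quadratic in the error, and after a Young inequality it leaves a per-level constant of size $h_0^{2\sigma}\lambda^2$ whose sum over $L$ levels cannot be dominated by $\Lambda_4\eta_\ell^2+\epsilon_4\sum_j\eta_j^2$. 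The trades you suggest (medius analysis, \cref{rem:J2_wanted}) cannot repair this, because $\lambda$, respectively $\Vert u\Vert_{H^{m+\sigma}(\Omega)}$, is an $O(1)$ quantity not controlled by any error term. The paper sidesteps both issues by using \cref{cor:InterpolationOperator}.\ref{item:cor_I_a} on both levels to write $a_{\mathrm{pw}}(u_{\mathrm{nc}}-\widehat{u}_{\mathrm{nc}},u-\widehat{u}_{\mathrm{nc}})=a_{\mathrm{pw}}(u_{\mathrm{nc}},I(u-\widehat{u}_{\mathrm{nc}}))-a_{\mathrm{pw}}(\widehat{u}_{\mathrm{nc}},\widehat{I}u-\widehat{u}_{\mathrm{nc}})$ and inserting only the two discrete rational eigenvalue problems \eqref{eq:disEVP}: the data-difference term is handled by \cref{lem:delta_control} and the discrete Friedrichs inequality \eqref{eq:disFriedrich}, yielding $h_{\max}^{\sigma}\vvvert u-u_{\mathrm{nc}}\vvvert_{\mathrm{pw}}\vvvert u-\widehat{u}_{\mathrm{nc}}\vvvert_{\mathrm{pw}}$, while the remaining term $(\lambda_h u_{\mathrm{nc}},(I-\widehat{I})(u-\widehat{u}_{\mathrm{nc}}))_{1+\delta}$ localizes to $\mathcal{T}\setminus\widehat{\mathcal{T}}$ by \ref{item:I_identity} and delivers $\Vert h_{\mathcal{T}}^{m}\lambda u\Vert_{L^2(\mathcal{T}\setminus\widehat{\mathcal{T}})}$ via \ref{item:I_kappa_disc} and a triangle inequality.
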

				
					\begin{proof}
						Since $(\lambda_h,u_{\mathrm{nc}})$ (resp.\ $(\widehat{\lambda}_h,\widehat{u}_{\mathrm{nc}})$) solves  
						\eqref{eq:disEVP} with respect to $\mathcal{T}$ and 
						${\delta}\in P_0({\mathcal{T}})$ (resp.\ $\widehat{\mathcal{T}}$ and 
						$\widehat{\delta}\in P_0(\widehat{\mathcal{T}})$ from \cref{tab:2level}),  \cref{cor:InterpolationOperator}.\ref{item:cor_I_a} 
						and elementary algebra show that 
						\begin{align}
							&a_{\mathrm{pw}}(u_{\mathrm{nc}}-\widehat{u}_{\mathrm{nc}},u-\widehat{u}_{\mathrm{nc}})
								=a_{\mathrm{pw}}\big(u_{\mathrm{nc}}, I(u-\widehat{u}_{\mathrm{nc}})\big)
								-a_{\mathrm{pw}}\big(\widehat{u}_{\mathrm{nc}},\widehat{I}u-\widehat{u}_{\mathrm{nc}}\big)\notag\\
								=& b\big(\lambda_hu_{\mathrm{nc}}(1+\delta), I(u-\widehat{u}_{\mathrm{nc}})\big)
								- b\big( \widehat{\lambda}_h\widehat{u}_{\mathrm{nc}}(1+\widehat{\delta}), 
										\widehat{I}u-\widehat{u}_{\mathrm{nc}}\big)\notag\\
								=& b\big(\lambda_hu_{\mathrm{nc}}(1+\delta)
								-\widehat{\lambda}_h\widehat{u}_{\mathrm{nc}}(1+\widehat{\delta}),\widehat{I}u-\widehat{u}_{\mathrm{nc}}\big)
										+\big(\lambda_hu_{\mathrm{nc}}, (I-\widehat{I})(u-\widehat{u}_{\mathrm{nc}})\big)_{1+\delta}.
										\label{eq:lem_towardsQuasiOptimality_sum}
						\end{align}
						The Cauchy-Schwarz inequality, $\lambda_h,\widehat{\lambda}_h\le\lambda$, and  \cref{lem:delta_control} in the last step prove 
						\begin{align*}
							t_1&:=b\big(\lambda_hu_{\mathrm{nc}}(1+\delta)
							-\widehat{\lambda}_h\widehat{u}_{\mathrm{nc}}(1+\widehat{\delta}),\widehat{I}u-\widehat{u}_{\mathrm{nc}}\big)\\
								&\le \Big(\Vert \lambda_hu_{\mathrm{nc}}-\widehat{\lambda}_h\widehat{u}_{\mathrm{nc}}\Vert_{L^2(\Omega)}
								+\lambda_h \Vert \delta u_{\mathrm{nc}}\Vert_{L^2(\Omega)}
								+\widehat{\lambda}_h\Vert \widehat{\delta} \widehat{u}_{\mathrm{nc}}\Vert_{L^2(\Omega)}\Big)
								\Vert\widehat{I}u-\widehat{u}_{\mathrm{nc}} \Vert_{L^2(\Omega)}\\
								&\le\max\{1,\lambda\} C_7 h_{\max}^{\sigma }
									\vvvert u-{u}_{\mathrm{nc}}\vvvert_{\mathrm{pw}}
									\Vert\widehat{I}u-\widehat{u}_{\mathrm{nc}} \Vert_{L^2(\Omega)}.
						\end{align*} 
			 			The discrete  Friedrichs inequality \eqref{eq:disFriedrich}  with respect to $V(\widehat{\mathcal{T}})$,   
						 \ref{item:I_Pi0}, and the $L^2$-projection $\widehat{\Pi}_0$ onto $P_0(\widehat{\mathcal{T}})$ lead to 
						$$C_{\mathrm{dF}}^{-1}\Vert \widehat{I}u-\widehat{u}_{\mathrm{nc}}\Vert_{L^2(\Omega)}\le
							\vvvert \widehat{I}u-\widehat{u}_{\mathrm{nc}}\vvvert_{\mathrm{pw}}
							=\Vert \widehat{\Pi}_0 D^{m}_{\mathrm{pw}} (u-\widehat{u}_{\mathrm{nc}})\Vert_{L^2(\Omega)}
							\le \vvvert u-\widehat{u}_{\mathrm{nc}}\vvvert_{\mathrm{pw}}. 
						$$
						Consequently, 
						$
							t_1\le \max\{1,\lambda\} C_7C_{\mathrm{dF}} h_{\max}^{\sigma }
									\vvvert u-{u}_{\mathrm{nc}}\vvvert_{\mathrm{pw}}	\vvvert u-\widehat{u}_{\mathrm{nc}}\vvvert_{\mathrm{pw}}.
						$ 
						Since $1+\delta\le 2$ from \cref{tab:2level}, the arguments behind \eqref{eq:I-hatI_CS} also show 
						\begin{align*}
							t_2:=&\big(\lambda_hu_{\mathrm{nc}}, (I-\widehat{I})(u-\widehat{u}_{\mathrm{nc}})\big)_{1+\delta}
							\le2
							\Vert h_{\mathcal{T}}^{m}\lambda_h u_{\mathrm{nc}}\Vert_{L^2(\mathcal{T}\setminus\widehat{\mathcal{T}})}
							\Vert h_{\mathcal{T}} ^{-{m}}(I-\widehat{I})(u-\widehat{u}_{\mathrm{nc}}) \Vert _{L^2(\Omega)}.
						\end{align*}
						Since \ref{item:I_identity} implies $I(\widehat{I}u)=Iu$, 
						 \ref{item:I_Pi0} and \ref{item:I_kappa_disc} for $I$ and \ref{item:I_Pi0} for $\widehat{I}$ show 
						$
							\Vert h_{\mathcal{T}} ^{-{m}}(I-\widehat{I})(u-\widehat{u}_{\mathrm{nc}}) \Vert _{L^2(\Omega)}
							=\Vert h_{\mathcal{T}} ^{-{m}}(1-I)(\widehat{I}u-\widehat{u}_{\mathrm{nc}}) \Vert _{L^2(\Omega)}
							\le \kappa_d \vvvert (1-I)\widehat{I}(u-\widehat{u}_{\mathrm{nc}})\vvvert_{\mathrm{pw}}							
							\le \kappa_d \vvvert u-\widehat{u}_{\mathrm{nc}}\vvvert_{\mathrm{pw}}.					
						$
						On the other hand, $\lambda_h\le\lambda$, a triangle inequality,  
						\eqref{eq:L2errorEnergyError}, and \cref{cor:InterpolationOperator}.\ref{item:cor_I_orthogonality} imply 
						\begin{align*}
							\Vert h_{\mathcal{T}}^{m}\lambda_h u_{\mathrm{nc}}\Vert_{L^2(\mathcal{T}\setminus\widehat{\mathcal{T}})}
							&\le \Vert h_{\mathcal{T}}^{m}\lambda u \Vert_{L^2(\mathcal{T}\setminus\widehat{\mathcal{T}})}
								+ \lambda h_{\max}^m \Vert u-u_{\mathrm{nc}}\Vert_{L^2(\Omega)}
								\\&\le \Vert h_{\mathcal{T}}^{m}\lambda u \Vert_{L^2(\mathcal{T}\setminus\widehat{\mathcal{T}})}
								+ C_5\lambda h_{\max}^{m+\sigma}  \vvvert u-u_{\mathrm{nc}}\vvvert_{\mathrm{pw}}. 
						\end{align*}
						Hence the upper bound $t_1+t_2$ in \eqref{eq:lem_towardsQuasiOptimality_sum} is controlled and the above estimates 
						lead to the assertion with 
						$C_{\mathrm{qo}}:=\max\{2\kappa_d, \max\{1,\lambda\}C_7C_{\mathrm{dF}}+2C_5\lambda h_{\max}^m\kappa_d\}$. 
					\end{proof}
					
					\begin{proofof}\textit{\cref{thm:A44EVP}.}
					Recall that $u_j\in V(\mathcal{T}_j)$  is the nonconforming component of the discrete solution 
					$\boldsymbol{u}_j=(u_{\mathrm{pw}}, u_\mathrm{nc})=:(u_{\mathrm{pw}}, u_j)\in P_m(\mathcal{T}_j)\times V(\mathcal{T}_j)$
				 	and that $\lambda_{j}(k)\le\lambda$ is the associated eigenvalue 	from \namecref{alg:AFEM4EVP} on the $j$-th level for $\ell\le j\le\ell+L$.
						 Since $\mathcal{T}_j,\, \mathcal{T}_{j+1}\in \mathbb{T}(\mathcal{T}_0)$ for $\ell\le j\le\ell+L$, \cref{lem:delta_control}  shows
						\begin{align*}
							\delta^2(\mathcal{T}_j, \mathcal{T}_{j+1})
							&\le  \vvvert u_j-u_{j+1}\vvvert_{\mathrm{pw}}^2+C_7^2C_{\mathrm{rel}}^2h_0^{2\sigma}\eta^2_j.
						\end{align*}
						Elementary algebra, \cref{lem:towardsQuasiOptimality}, and two weighted Young inequalities show 
						\begin{align*}
							\vvvert u_j-u_{j+1}\vvvert_{\mathrm{pw}}^2
							-&\vvvert u-u_j\vvvert_{\mathrm{pw}}^2+\vvvert u-u_{j+1}\vvvert_{\mathrm{pw}}^2
							=2 a_{\mathrm{pw}}(u-u_{j+1}, u_j-u_{j+1})\\
							\le& 2 C_{\mathrm{qo}} \Big(h_0^{\sigma}
											\vvvert u-{u}_j\vvvert_{\mathrm{pw}}
											+\Vert h_{\mathcal{T}_j}^{m}\lambda u\Vert_{L^2(\mathcal{T}_j\setminus{\mathcal{T}_{j+1}})}\Big)
											\vvvert u-{u}_{j+1}\vvvert_{\mathrm{pw}}
							\\ \le&  \frac{2 C_{\mathrm{qo}}^2}{\beta} h_0^{2\sigma} C_{\mathrm{rel}}^2\eta_j^2
							+\beta C_{\mathrm{rel}}^2 \eta_{j+1}^2
							 +\frac{2 C_{\mathrm{qo}}^2}{\beta}
							 \Vert h_{\mathcal{T}_j}^{m} \lambda u\Vert^2_{L^2(\mathcal{T}_j\setminus\mathcal{T}_{j+1})}
						\end{align*}					
						with 	\cref{thm:reliabilty+efficiency} in the last step. 
						\cref{thm:reliabilty+efficiency} controls the  telescoping sum 
						$$\sum_{j=\ell}^{\ell+L}\big(\vvvert u-u_{j}\vvvert_{\mathrm{pw}}^2-\vvvert u-u_{j+1}\vvvert_{\mathrm{pw}}^2\big)
						= \vvvert u-u_{\ell}\vvvert_{\mathrm{pw}}^2-\vvvert u-u_{\ell+L+1}\vvvert_{\mathrm{pw}}^2
						\le C_{\mathrm{rel}}^2\eta_{\ell}^2-C_{\mathrm{eff}}^2\eta_{\ell+L+1}^2.$$ 
						Since $\beta\le C_{\mathrm{eff}}^2/C_{\mathrm{rel}}^2$ implies 
						$(\beta C_{\mathrm{rel}}^2-C_{\mathrm{eff}}^2)\,\eta_{\ell+L+1}^2\le 0$, 
						the last three displayed estimates show 
						\begin{align}					
							\sum_{j=\ell}^{\ell+L} \delta^2(\mathcal{T}_j, \mathcal{T}_{j+1})
								\le&	
										\sum_{j=\ell}^{\ell+L}\big(\vvvert u-u_{j}\vvvert_{\mathrm{pw}}^2-
										\vvvert u-u_{j+1}\vvvert_{\mathrm{pw}}^2\big)
												+\Big(\Big(\frac{2C_{\mathrm{qo}}^2}{\beta}+C_7^2\Big)h_0^{2\sigma}+\beta\Big)
												C_{\mathrm{rel}}^2\sum_{k=\ell}^{\ell+L}\eta^2_j\notag
												\\&
												+\beta C_{\mathrm{rel}}^2\eta_{\ell+L+1}^2												
												+\frac{2C_{\mathrm{qo}}^2}{\beta}\sum_{j=\ell}^{\ell+L}
												\Vert h_{\mathcal{T}_j}^{m}\lambda u\Vert^2_{L^2(\mathcal{T}_j\setminus\mathcal{T}_{j+1})}
												\label{eq:A4_sum}\\
								\le& C_{\mathrm{rel}}^2\eta_{\ell}^2 
											+\Big(\Big(\frac{2C_{\mathrm{qo}}^2}{\beta}+C_7^2\Big)h_0^{2\sigma}+\beta\Big)C_{\mathrm{rel}}^2\sum_{k=\ell}^{\ell+L}\eta^2_j
												+\frac{2C_{\mathrm{qo}}^2}{\beta} \sum_{j=\ell}^{\ell+L}
												\Vert h_{\mathcal{T}_j}^{m}\lambda u\Vert^2_{L^2(\mathcal{T}_j\setminus\mathcal{T}_{j+1})}.\notag
						\end{align} 
						Recall that $h_{\mathcal{T}_j}|_T:=\textup{diam}(T)$ for any $T\in\mathcal{T}_j$  and 
						compare it with the piecewise constant function $\tilde{h}_j\in P_0(\mathcal{T}_j)$ defined by 
						$\tilde{h}_j|_T:=|T|^{1/3}\le h_T\le C_{\mathrm{sr}}|T|^{1/3}$ (from shape-regularity) for any $T\in\mathcal{T}_j$ and $j\in\mathbb{N}_0$. 
						Then  
						$\tilde{h}_j\approx h_{\mathcal{T}_j}\in P_0(\mathcal{T}_j)$ and $\tilde{h}_j\in P_0(\mathcal{T}_j)$ 
						satisfies the reduction $\tilde{h}_{j+1}\le \tilde{h}_{j}/2^{1/3}$ a.e. in 
						the set of refined tetrahedra $\bigcup\big(\mathcal{T}_j\setminus\mathcal{T}_{j+1}\big)$. 
						Hence  $\tilde{h}_j^{m}\le \frac{2^{m/3}}{\sqrt{4^{m/3}-1}}\,\sqrt{\tilde{h} _j^{2{m}}-\tilde{h}_{j+1}^{2{m}}}$ a.e. in 
						$\bigcup\big(\mathcal{T}_j\setminus\mathcal{T}_{j+1}\big)$ and 
\begin{align*}
							C_{\mathrm{sr}}^{-2m}&\frac{{4^{m/3}-1}}{4^{m/3}}
							\sum_{j=\ell}^{\ell+L}\Vert h_{\mathcal{T}_j}^{m}\lambda u\Vert^2_{L^2(\mathcal{T}_j\setminus\mathcal{T}_{j+1})}
							\le\frac{{4^{m/3}-1}}{4^{m/3}}\sum_{j=\ell}^{\ell+L}\Vert \tilde{h}_j^{m} \lambda u\Vert^2_{L^2(\mathcal{T}_j\setminus\mathcal{T}_{j+1})}\\
							&\le \sum_{j=\ell}^{\ell+L}\Big\Vert \sqrt{\tilde{h}_j^{2{m}}-\tilde{h}_{j+1}^{2{m}}}\lambda u\Big\Vert^2_{L^2(\Omega)}
							= \int_{\Omega}(\tilde{h}_\ell^{2{m}}-\tilde{h}_{\ell+L+1}^{2{m}})(\lambda u)^2\textup{d}x
							\le  \Vert \tilde{h}_\ell^{m} \lambda u\Vert_{L^2(\Omega)}^2.
\end{align*}
						Since $\tilde{h}_\ell\le h_{\mathcal{T}_\ell}\le h_0:=\max_{T\in\mathcal{T}_0}h_T\le\varepsilon_6$, a triangle inequality implies 
						\[
							\Vert\tilde{h} _\ell^{m}\lambda u\Vert_{L^2(\Omega)}^2
							\le  2(\lambda/\lambda_\ell(k))^{2}\Vert \tilde{h} _\ell^{m}\lambda_\ell(k) u_\ell\Vert_{L^2(\Omega)}^2
							+ 2\lambda^2 h_0^{2m}\Vert u-u_\ell\Vert_{L^2(\Omega)}^2. \]
 \cref{thm:BoundInterpolationError}.\ref{item:lambdahSimple}	and   \eqref{eq:def_eta} show  
						$(\lambda/\lambda_\ell(k))^{2}\Vert \tilde{h} _\ell^{m}\lambda_\ell(k) u_\ell\Vert_{L^2(\Omega)}^2\le  4\eta_\ell^2$. 
		 \cref{cor:InterpolationOperator}.\ref{item:cor_I_orthogonality}, \cref{thm:reliabilty+efficiency}, and \eqref{eq:L2errorEnergyError}  imply 
							$\Vert u-u_\ell\Vert_{L^2(\Omega)}^2
							\le h_{0}^{2\sigma} C_5^2C_\mathrm{rel}^2\eta_\ell^2.
						$
						The substitution in \eqref{eq:A4_sum} 
						concludes the proof  
						with 
$\Lambda_4:=\max\{C_{\mathrm{rel}}^2,C_{\mathrm{qo}}^2C_{\mathrm{sr}}^{2m}\frac{4^{m/3+1}}{{4^{m/3}-1}}(4+h_{0}^{2m+2\sigma}C_5^2C_{\mathrm{rel}}^2\lambda^2)\}$ and 
						$\widetilde{\Lambda}_4:=C_{\mathrm{rel}}^2\max\{1,2C_{\mathrm{qo}}^2,\allowbreak C_7^2\}$.
					\end{proofof}
					
\section{Conclusion and comments}\label{sec:optimality4evp}

\subsection{Proof of \cref{thm:Optimality4GLB}}
				The proven properties  \ref{A1Stability}--\ref{A4epsilon_Quasiothogonality} are the axioms of adaptivity in 
				\cite{CFPP14,CR16} and known to imply  \eqref{eq:Optimality}. 
				Compared to \cite{CFPP14,CR16}  the  discrete reliability in  \cref{thm:A34EVP} is extended in that 
				\ref{A3DiscreteReliabilty_new} includes the additional term $M_3 h_{\max}^{2\sigma}\eta^2(\mathcal{T})$. 
				Minor modifications of the arguments in \cite{CFPP14,CR16}  prove that \ref{A1Stability}--\ref{A4epsilon_Quasiothogonality}  
				imply  \eqref{eq:Optimality}. 	
				This is stated and proven as \cref{thm:Optimality} in \cref{sec:Appendix} 
				for  some $\varepsilon:=\varepsilon_7\le \varepsilon_6$.  
					\hfill $\Box$

\subsection{Optimal convergence rates of the error}
				The reliability and efficiency in  \cref{thm:reliabilty+efficiency} provide the equivalence 
				$\vvvert u-u_{\ell}\vvvert_{\mathrm{pw}}\approx \eta_\ell(\mathcal{T}_\ell)$. This and \cref{thm:Optimality4GLB}	
				lead to optimal convergence rates for the error as well. 
				
\subsection{Global convergence}	\label{sec:modifiedAlgo}
					This paper on the asymptotic convergence rates justifies that a small initial mesh-size 
					guarantees the asymptotic convergence from the beginning.  
					Although the reasons are presented in several steps for $\varepsilon_0, \dots,\varepsilon_7$, 
					the computation of $\varepsilon_7$ may be cumbersome and a huge overestimation in practice. 
					To guarantee global convergence without a priori knowledge of $\varepsilon_7$, 
					we may modify the marking step in \namecref{alg:AFEM4EVP} as follows: 
					Enlarge  the set $\mathcal{M}_\ell$ in \namecref{alg:AFEM4EVP} by one tetrahedron of maximal mesh-size in  $\mathcal{T}_\ell$.  
					This guarantees that the maximal mesh-size  tends to zero as the level $\ell\to\infty$. 
					Consequently there exists some $L\in\mathbb{N}$ such that $\mathcal{T}_\ell \in \mathbb{T}(\varepsilon_7)$ 
					for all $\ell=L,L+1,L+2,\dots$  
					Relabel $\mathcal{T}_L$ by $\mathcal{T}_0$ so that  \cref{thm:Optimality4GLB} 
					leads to optimal convergence rates for $\eta_L,\eta_{L+1},\eta_{L+2},\dots $, 
					whence  for the entire  outcome of the adaptive algorithm. 	
However, the constant in the overhead control \cite[Thm.~6.1]{Stev08} depends on $\mathcal{T}_L$ 
and this possibly enlarges the equivalence constants in \eqref{eq:Optimality}.
					
\subsection{Numerical experiments}
Numerical experiments in \cite{CP_Part1,CEP20} show an asymptotic convergences of \namecref{alg:AFEM4EVP} with $\theta=0.5$ even for coarse initial triangulation and  confirm the optimal convergence rates of \cref{thm:Optimality4GLB} 
even for one example with a multiple eigenvalue. 
The extension to eigenvalue clusters requires an algorithm from 
\cite{Gal15_cluster,DaiHeZhou2015,BGG17}.  This paper  assumes {\em exact solve} of the algebraic eigenvalue problem \eqref{eq:dis_EVP_alt}, 
but perturbation results in numerical linear algebra \cite{Par98} can be 
included as in \cite{CG12}.
\paragraph*{Acknowledgements}
This work has been supported by the Deutsche Forschungsgemeinschaft (DFG) in the Priority Program 1748 
{\em Reliable simulation techniques in solid mechanics. Development of non-standard discretization methods, mechanical and mathematical analysis} under 
CA 151/22-2. The second author is supported by the Berlin Mathematical School.
\FloatBarrier
\newpage
\appendix
\counterwithout{equation}{section}
\section{Appendix --  A review and extension of the axioms of adaptivity} \label{sec:Appendix}
	The framework \ref{A1Stability_app}--\ref{A4epsilon_Quasiothogonality_app} 
	in \cref{sec:aposteriori} is a modification of  \cite{CFPP14,CR16}  with a more general discrete reliability \ref{A3DiscreteReliabilty_app}.
	\cref{thm:Optimality} below proves that the modified axioms are sufficient for optimal convergence rates of the \namecref{alg:AFEM} algorithm 
	with  D\"orfler marking and newest-vertex bisection \cite[Algorithm 2.2]{CFPP14}. 
	On level $\ell\in\mathbb{N}_0$ of the general purpose adaptive algorithm \namecref{alg:AFEM} there is given a regular triangulation 
	$\mathcal{T}_\ell$ of $\Omega\subset\mathbb{R}^n$ into closed simplices and an undisplayed discrete problem with a discrete solution $u_\ell$. 
	These allow for the computation of $\eta_\ell(T)$ for all $T\in\mathcal{T}_\ell$ in the step compute. The step mark uses the sum convention 
	$\eta_\ell^2(\mathcal{M}):=\sum_{T\in\mathcal{M}}\eta_\ell^2(T)$ for any $\mathcal{M}\subseteq\mathcal{T}_\ell$   and 
	$\eta_\ell^2:=\eta^2_\ell(\mathcal{T}_\ell)$. The selection of a set $\mathcal{M}_\ell$ 
			with \emph{almost minimal cardinality} in this step means that  there exists a constant $\Lambda_{\mathrm{opt}}\ge 1$ such that 
			the cardinality satisfies $|\mathcal{M}_\ell|\le \Lambda_{\mathrm{opt}}|\mathcal{M}_\ell^\star|$,
			where ${\mathcal{M}}^\star_\ell\subset\mathcal{T}_\ell$  denotes some set of minimal cardinality $|{\mathcal{M}}^\star_\ell|$ 
			with $\theta\eta_\ell^2 \leq \sum_{T\in\mathcal{M}_\ell^\star}\eta_\ell^2(T)$; cf. \cite{Stevenson2007,CFPP14,CR16} for details; 
			this is more general than in \namecref{alg:AFEM4EVP}, which utilizes a minimal set $M_\ell$ with $\Lambda_{\mathrm{opt}}=1$ constructed at linear cost in  \cite{PfeilerPraetorius2020}. 
	\begin{center}		
		\begin{minipage}{.96\linewidth}
		{\begin{algorithm}[H]
		\begin{algorithmic}
			\Require regular initial triangulation $\mathcal{T}_0$ of $\Omega\subset\mathbb{R}^n$ and bulk parameter $0<\theta\leq 1$
			\For{$\ell = 0,1,2,\dots$}
			\State \textbf{Solve} the discrete problem for the discrete solution $u_\ell$ based on $\mathcal{T}_\ell$
			\State \textbf{Compute} $\eta_\ell(T)$ for any $T\in\mathcal{T}_\ell$ with respect to the discrete solution
			\State \textbf{Mark} almost minimal subset $\mathcal{M}_\ell\subseteq\mathcal{T}_\ell$ with  
						$\theta\eta_\ell^2 \leq \eta_\ell^2(\mathcal{M}_\ell)$ 
			\State \textbf{Refine} $\mathcal{T}_\ell$ with newest vertex bisection to 
		compute $\mathcal{T}_{\ell+1}$ with $\mathcal{M}_\ell\subseteq \mathcal{T}_\ell \setminus \mathcal{T}_{\ell+1}$\textbf{ od}
		 \EndFor
		\Ensure sequence of triangulations $(\mathcal{T}_\ell)_{\ell\in\mathbb{N}_0}$ with  $(u_\ell)_{\ell\in\mathbb{N}_0}$ 
		and $(\eta_\ell)_{\ell\in\mathbb{N}_0}$
		\end{algorithmic}
		\caption{}
		\label[AFEM]{alg:AFEM}
		\end{algorithm}}
		\end{minipage}
		\par
		\end{center}
		\vspace{0.9em}
		This appendix is written in a self-contained way based on the set $\mathbb{T}:=\mathbb{T}(\mathcal{T}_0)$  
		of all admissible triangulation computed by successive newest-vertex bisection \cite{Stev08,GSS14} 
		of a regular initial triangulation $\mathcal{T}_0$ (plus some initialization of tagged $n$-simplices)  
		of the bounded polyhedral Lipschitz domain  $\Omega\subset\mathbb{R}^n$ into closed simplices 
	 	and the subset $\mathbb{T}(\mathcal{T})$ of admissible refinements of $\mathcal{T}\in\mathbb{T}$. 
	 	For $N\in\mathbb{N}_0$, set $\mathbb{T}(N):=\{\mathcal{T}\in\mathbb{T}:\, |\mathcal{T}|\le |\mathcal{T}_0|+ N\}$.
	 	To analyse the error estimates $\eta_\ell(\mathcal{T}_\ell)$ and their rates and in particular to compare with 
	 	error estimators $\eta(\mathcal{T},\bullet)$ for \emph{any} 
	 	admissible triangulation $\mathcal{T}\in\mathbb{T}$, we need to assume that the error estimators 
	 	are computable for any $\mathcal{T}\in\mathbb{T}$. 
	 	This leads to a family $\eta(\mathcal{T},\bullet)\in\mathbb{R}^{\mathcal{T}}$ 
	 	of error estimators parametrized by $\mathcal{T}\in\mathbb{T}$ with $\eta(\mathcal{T},K)\ge 0$ for all $K\in\mathcal{T}$. 
	 	For any subset $\mathcal{M}\subseteq\mathcal{T}\in\mathbb{T}$,  the sum convention reads 
			\begin{align}
				\eta^2(\mathcal{T},\mathcal{M}):= \big(\eta(\mathcal{T},\mathcal{M})\big)^2:=\sum_{T\in\mathcal{M}}\eta^2(\mathcal{T},T) 
				\quad\text{ and }\quad \eta^2(\mathcal{T}):=\eta(\mathcal{T},\mathcal{T}).			
				\label{eq:def_eta_global}
			\end{align}
			For any triangulation $\mathcal{T}_\ell$ in the \namecref{alg:AFEM} algorithm, 
			we abbreviate $\eta_\ell(\bullet):=\eta(\mathcal{T}_\ell,\bullet)$ and 
			$\eta_\ell:=\eta_\ell(\mathcal{T}_\ell)\equiv \eta(\mathcal{T}_\ell,\mathcal{T}_\ell)$.  
			Recall the Axioms \ref{A1Stability_app}--\ref{A4epsilon_Quasiothogonality_app} with constants 
			$\Lambda_1$, $\Lambda_2$,   $\Lambda_3$,  $\widehat{\Lambda}_3$, $\Lambda_4$, $\Lambda_{\mathrm{ref}}$, 
			$\epsilon_3$,  $\epsilon_4 >0$, and $0<\rho_2<1$ for convenient reading. 
			For any $\mathcal{T}\in\mathbb{T}$ and admissible refinement $\widehat{\mathcal{T}}\in\mathbb{T}(\mathcal{T})$,  
			there exists a set $\mathcal{R}(\mathcal{T},\widehat{\mathcal{T}})\subseteq \mathcal{T}$ with 
			$\mathcal{T}\setminus\widehat{\mathcal{T}}\subset \mathcal{R}(\mathcal{T},\widehat{\mathcal{T}})$ and 
			$|\mathcal{R}(\mathcal{T},\widehat{\mathcal{T}})|\le \Lambda_{\mathrm{ref}}|\mathcal{T}\setminus\widehat{\mathcal{T}}|$, 
			such that  $\mathcal{T}\in\mathbb{T}$,  $\widehat{\mathcal{T}}\in\mathbb{T}(\mathcal{T})$, $\mathcal{R}(\mathcal{T},\widehat{\mathcal{T}})$,  
			and 
			the output $(\mathcal{T}_k)_{k\in\mathbb{N}_0}$ and $(\eta_k)_{k\in\mathbb{N}_0}$ of \namecref{alg:AFEM} satisfy  
			\ref{A1Stability_app}--\ref{A4epsilon_Quasiothogonality_app}. 
		\begin{enumerate}[label={\textup{({A${\arabic*}$})\ }},ref={\textup{({A${\arabic*}$})}}, leftmargin=3.5em]
			\item\label{A1Stability_app}Stability. 
						$\displaystyle
		 						\big\lvert{\eta(\mathcal{T},\mathcal{T}\cap \widehat{\mathcal T})
		 						-{\eta}( \widehat{\mathcal T},\mathcal{T}\cap\widehat{\mathcal T})}\big\rvert 
		 						\leq \Lambda_1 \delta(\mathcal{T},\widehat{\mathcal T}).
						$
			\item\label{A2Reduction_app}Reduction. 
				$\displaystyle
					\eta( \widehat{\mathcal T},\widehat{\mathcal T}\setminus\mathcal{T}) 
							\leq \rho_2 \eta(\mathcal{T},\mathcal{T}\setminus\widehat{\mathcal T}) 
							+ \Lambda_2 \delta(\mathcal{T},\widehat{\mathcal T}).
					$
			\item[\mylabel{A3DiscreteReliabilty_app}{\textup{({A${3_{\varepsilon}}$})}}]Discrete reliability. 
	 		$\displaystyle
	  		\delta^2(\mathcal{T},\widehat{\mathcal{T}}) \leq \Lambda_3 \eta^2(\mathcal{T},\mathcal{R}(\mathcal{T},\widehat{\mathcal{T}}))
	  		+ \widehat\Lambda_3 \eta^2(\widehat{\mathcal{T}})+\epsilon_3 \eta^2(\mathcal{T}).
	 		$
	 		\item[\mylabel{A4epsilon_Quasiothogonality_app}{\textup{({A${4_{\varepsilon}}$})}}]Quasiorthogonality. 
			$\displaystyle
						\sum_{j=\ell}^{\ell+m}\delta^2(\mathcal{T}_j,\mathcal{T}_{j+1}) \leq \Lambda_{4} \eta^2_\ell
							+\epsilon_4\sum_{j=\ell}^{\ell+m}\eta_j^2
						\text{ for any }\ell,m\in\mathbb{N}_0.
			$		
		\end{enumerate}
		\cref{thm:Optimality} below contains smallness assumptions for the constants $\widehat{\Lambda}_3,\,\epsilon_3,$ and $\epsilon_4$. 
		In a typical application such as \cref{thm:Optimality4GLB} the quantities $\widehat{\Lambda}_3,\,\epsilon_3,\,\epsilon_4$ contain a power of the initial mesh-size 
		$h_0:=\max_{T\in\mathcal{T}_0}h_T$ such that the assumptions are satisfied for a sufficiently fine initial triangulation $\mathcal{T}_0$. 
		Given $\epsilon_3<\Lambda_1^{-2}$, set $\Theta:={(1-\Lambda_1^2\epsilon_3)}/{(1+\Lambda_1^2  \Lambda_3)}$. Any choice of $\mu$ and $\xi$ with 
		$0<\mu<\rho_2^{-2}-1$ and $0<\xi<(1-(1+\mu)\rho_2^2)\Theta /(1-\Theta)$ implies 	
		$$\rho_{12}:=\Theta\rho_2^2(1+\mu)+(1-\Theta)(1+\xi)<1\quad\text{and}\quad\Lambda_{12}:=(1+1/\xi)\Lambda_1^2+(1+1/\mu)\Lambda_2^2<\infty.$$ 
		\begin{theorem}[rate optimality of the adaptive algorithm]\label{thm:Optimality}
			Suppose \ref{A1Stability_app}--\ref{A4epsilon_Quasiothogonality_app} 
			with $$\Lambda_1^2\epsilon_3<1,\quad  \widehat{\Lambda}_3(\Lambda_1^2+\Lambda_2^2)<1,\quad
			 \epsilon_4<(1-\rho_{12})/\Lambda_{12},\quad\text{and}\quad 0<\theta < \Theta.$$  
			The output $(\mathcal{T}_{\ell})_{\ell\in\mathbb{N}_0}$ and $(\eta_\ell)_{\ell\in\mathbb{N}_0}$ of   
			 \namecref{alg:AFEM}  satisfy,  for any $s>0$, the equivalence 
			 \begin{equation*}
		 		\sup_{\ell\in\mathbb{N}_0} (1+\abs{\mathcal{T}_\ell} - \abs{\mathcal{T}_0})^s \eta_\ell 
		 		\approx  \sup_{N\in\mathbb{N}_0} (1+N)^s \min_{\mathcal{T}\in\mathbb{T}(N)}\eta(\mathcal{T}).
			\end{equation*}
		\end{theorem}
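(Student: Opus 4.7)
The plan follows the axioms of adaptivity framework of \cite{CFPP14,CR16}, modified to absorb the additional self-referential term $\widehat{\Lambda}_3\eta^2(\widehat{\mathcal{T}})$ and the estimator-proportional terms $\epsilon_3\eta^2(\mathcal{T})$ and $\epsilon_4\sum\eta_j^2$ in \ref{A3DiscreteReliabilty_app}--\ref{A4epsilon_Quasiothogonality_app}. The direction ``$\gtrsim$'' is trivial with constant one; for ``$\lesssim$'' I assume the nonlinear approximation norm $\Vert\cdot\Vert_{A_s}:=\sup_N(1+N)^s\min_{\mathcal{T}\in\mathbb{T}(N)}\eta(\mathcal{T})$ is finite and proceed in three conceptual steps: estimator reduction and R-linear convergence; quasi-monotonicity; and optimality of D\"orfler marking.

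\textbf{Estimator reduction and R-linear convergence.} Two weighted Young inequalities applied to \ref{A1Stability_app}--\ref{A2Reduction_app} with parameters $\xi,\mu>0$ yield, for any $\widehat{\mathcal{T}}\in\mathbb{T}(\mathcal{T})$, the estimator reduction
$$\eta^2(\widehat{\mathcal{T}}) \le (1+\xi)\eta^2(\mathcal{T},\mathcal{T}\cap\widehat{\mathcal{T}}) + (1+\mu)\rho_2^2\,\eta^2(\mathcal{T},\mathcal{T}\setminus\widehat{\mathcal{T}}) + \Lambda_{12}\,\delta^2(\mathcal{T},\widehat{\mathcal{T}}).$$
For $\widehat{\mathcal{T}}=\mathcal{T}_{\ell+1}$, D\"orfler marking with $\theta<\Theta$ converts this into the perturbed contraction $\eta_{\ell+1}^2 \le \rho_{12}\eta_\ell^2 + \Lambda_{12}\delta^2(\mathcal{T}_\ell,\mathcal{T}_{\ell+1})$. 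Summing over $\ell,\ldots,\ell+m$, invoking \ref{A4epsilon_Quasiothogonality_app}, and absorbing the $\epsilon_4\sum\eta_j^2$ term using $\epsilon_4<(1-\rho_{12})/\Lambda_{12}$ produces $\sum_{j\ge\ell}\eta_j^2\lesssim\eta_\ell^2$, whence the R-linear convergence $\eta_{\ell+j}^2\lesssim\rho_{\mathrm{lin}}^j\eta_\ell^2$ for some $\rho_{\mathrm{lin}}<1$. Combining the estimator reduction with \ref{A3DiscreteReliabilty_app} gives the quasi-monotonicity $\eta(\widehat{\mathcal{T}})\le C_{\mathrm{mon}}\eta(\mathcal{T})$ for arbitrary $\widehat{\mathcal{T}}\in\mathbb{T}(\mathcal{T})$: the self-referential $\widehat{\Lambda}_3\eta^2(\widehat{\mathcal{T}})$ on the right-hand side is absorbed into the left thanks to $\widehat{\Lambda}_3(\Lambda_1^2+\Lambda_2^2)<1$.

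\textbf{Optimality of D\"orfler marking.} Fix a level $\ell$ and pick the smallest $N\in\mathbb{N}_0$ such that some minimizer $\tilde{\mathcal{T}}_N\in\mathbb{T}(N)$ satisfies $\eta(\tilde{\mathcal{T}}_N)\le\kappa^{1/2}\eta_\ell$ for a sufficiently small threshold $\kappa=\kappa(\theta,\Theta)>0$. For the overlay $\widehat{\mathcal{T}}:=\mathcal{T}_\ell\oplus\tilde{\mathcal{T}}_N$, I apply \ref{A1Stability_app} and \ref{A3DiscreteReliabilty_app} to the pair $(\mathcal{T}_\ell,\widehat{\mathcal{T}})$ and control the dangerous $\widehat{\Lambda}_3\eta^2(\widehat{\mathcal{T}})$ via quasi-monotonicity by $\widehat{\Lambda}_3 C_{\mathrm{mon}}^2\eta^2(\tilde{\mathcal{T}}_N)\lesssim\kappa\eta_\ell^2$. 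After absorbing $\epsilon_3\eta_\ell^2$ into the same $\eta_\ell^2$ factor, the resulting inequality forces $\eta^2(\mathcal{T}_\ell,\mathcal{R}(\mathcal{T}_\ell,\widehat{\mathcal{T}}))\ge\theta\eta_\ell^2$, i.e., $\mathcal{R}(\mathcal{T}_\ell,\widehat{\mathcal{T}})$ satisfies the D\"orfler condition on $\mathcal{T}_\ell$ with parameter $\theta$. The almost minimality of $\mathcal{M}_\ell$, the cardinality bound $|\mathcal{R}(\mathcal{T}_\ell,\widehat{\mathcal{T}})|\le\Lambda_{\mathrm{ref}}|\mathcal{T}_\ell\setminus\widehat{\mathcal{T}}|\le\Lambda_{\mathrm{ref}}(|\tilde{\mathcal{T}}_N|-|\mathcal{T}_0|)\le\Lambda_{\mathrm{ref}}N$, and the minimality of $N$ yield $|\mathcal{M}_\ell|\lesssim\eta_\ell^{-1/s}\Vert\cdot\Vert_{A_s}^{1/s}$. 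The overhead estimate of newest-vertex bisection from \cite{Stev08} together with a geometric summation driven by the R-linear convergence bound $|\mathcal{T}_\ell|-|\mathcal{T}_0|\lesssim\sum_{j\le\ell}|\mathcal{M}_j|\lesssim\eta_\ell^{-1/s}\Vert\cdot\Vert_{A_s}^{1/s}$ and conclude the proof.

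\textbf{Main obstacle.} The self-referential term $\widehat{\Lambda}_3\eta^2(\widehat{\mathcal{T}})$ in \ref{A3DiscreteReliabilty_app} is absent in the original CFPP14/CR16 framework; its presence forces two successive absorption arguments, one inside quasi-monotonicity and one inside the optimality-of-D\"orfler step, and the smallness hypothesis $\widehat{\Lambda}_3(\Lambda_1^2+\Lambda_2^2)<1$ is precisely what permits the first absorption to succeed (the second then follows through $C_{\mathrm{mon}}$). The modified D\"orfler threshold $\Theta=(1-\Lambda_1^2\epsilon_3)/(1+\Lambda_1^2\Lambda_3)$ and the condition $\epsilon_4<(1-\rho_{12})/\Lambda_{12}$ are exactly the sharp values that emerge after propagating the $\epsilon_3$- and $\epsilon_4$-perturbations through the optimality-of-D\"orfler and R-linear convergence arguments; apart from these absorptions the proof is a line-by-line adaptation of \cite{CFPP14,CR16}.
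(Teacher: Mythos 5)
Your proposal follows essentially the same route as the paper's appendix proof: the estimator reduction, absorption of $\epsilon_4$ into a strengthened quasi-orthogonality and R-linear convergence, quasi-monotonicity with absorption of the self-referential $\widehat{\Lambda}_3\eta^2(\widehat{\mathcal{T}})$ term under $\widehat{\Lambda}_3(\Lambda_1^2+\Lambda_2^2)<1$, and a comparison lemma for the overlay in which the overlay's small estimator (obtained through quasi-monotonicity against the optimal mesh) controls the $\widehat{\Lambda}_3$-term while $\epsilon_3\eta^2(\mathcal{T}_\ell)$ is absorbed into the modified D\"orfler threshold $\theta_0<\Theta$, followed by the newest-vertex-bisection overhead bound and a geometric sum driven by R-linear convergence. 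Two harmless imprecisions, both handled exactly this way in the paper: the direction ``$\gtrsim$'' is not constant-one but uses the comparison of consecutive cardinalities via the bounded number of NVB children, and in the quasi-monotonicity step one needs a fresh Young parameter $\alpha$ with $(1+\alpha)(\Lambda_1^2+\Lambda_2^2)\widehat{\Lambda}_3<1$ rather than the $\xi,\mu$ already fixed by $\rho_{12}<1$.
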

	The proof of \cref{thm:Optimality} reviews parts of the analysis in 
	\cite{CFPP14,CR16} and focusses on the relevant extensions in 
	\cref{thm:Quasimonotonicity} and \cref{thm:ComparisonLemma} below. The following results \eqref{eq:A12}, \eqref{eq:A4}, and \eqref{eq:Convergence} 
	follow verbatim as in \cite{CFPP14,CR16}: 
	\ref{A1Stability_app}--\ref{A2Reduction_app} and the D\"orfler marking  strategy with bulk parameter 
	$\theta<\Theta<1$
	provide the estimator reduction \cite[Thm.~4.1]{CR16}
			\begin{align}
				\eta^2(\widehat{\mathcal{T}})\le \varrho_{12}\eta^2(\mathcal{T})+\Lambda_{12}\delta^2(\mathcal{T},\widehat{\mathcal{T}})
				\tag{A12} \label{eq:A12}
			\end{align}
	for any $\mathcal{T}\in \mathbb{T}$ and any admissible refinement $\widehat{\mathcal{T}}\in\mathbb{T}(\mathcal{T})$. 
	The estimator reduction \eqref{eq:A12}, \ref{A4epsilon_Quasiothogonality_app}, and  $\Lambda_{\mathrm{qo}}:=\Lambda_4+\epsilon_4(1+\Lambda_{12}\Lambda_4)/(1-\rho_{12}-\epsilon_4\Lambda_{12})>0$ 
	guarantee the stricter quasi-orthogonality \cite[Thm.~3.1]{CR16}
		\begin{align}
			 \sum_{k=\ell}^{\ell+m}\delta^2(\mathcal{T}_k,\mathcal{T}_{k+1}) 
			 \leq \Lambda_{\mathrm{qo}} \eta^2_\ell\quad\text{ for any }\ell,m\in\mathbb{N}_0.\tag{A4}\label{eq:A4} 
		\end{align}
	This and \eqref{eq:A12}  imply plain and $R$-linear convergence on each level 
	for the output $(\eta_\ell)_{\ell\in\mathbb{N}_0}$  of \namecref{alg:AFEM} 
	in \cite[Thm.~4.2]{CR16}:  The constants $\Lambda_c:=(1+\Lambda_{12}\Lambda_{\mathrm{qo}})/(1-\rho_{12})>0$ 
	and $q_c:=\Lambda_c/(1+\Lambda_c)<1$ satisfy 
		\begin{align}
			\sum_{k=\ell}^{\ell+m}\eta_k^2&\le \Lambda_c\eta_\ell^2
			\quad\text{and}\quad\eta_{\ell+m}^2\le \frac{q_c^m}{1-q_c}\eta_{\ell}^2\quad\text{for any }\ell, m\in\mathbb{N}_0. 
			\label{eq:Convergence}
		\end{align}
	On the other hand, \ref{A1Stability_app}--(A3) are sufficient for 
	the  quasimonotonicity (QM) and the 
	comparison lemma. 
	But the discrete reliability is relaxed in \ref{A3DiscreteReliabilty_app} in this paper,  so the proofs of (QM) and the 
	comparison lemma are revisited below.
		\begin{theorem}[QM]\label{thm:Quasimonotonicity}
			The axioms \ref{A1Stability_app}, \ref{A2Reduction_app}, \ref{A3DiscreteReliabilty_app}, 
			and $\widehat{\Lambda}_3(\Lambda_1^2+\Lambda_2^2)<1$ 
			imply the existence of $\Lambda_{\mathrm{mon}}>0$ such that $\eta(\widehat{\mathcal{T}})\le \Lambda_{\mathrm{mon}}\eta(\mathcal{T})$ 
			holds for any  $\mathcal{T}\in\mathbb{T}$ and  $\widehat{\mathcal{T}}\in\mathbb{T}(\mathcal{T})$.
		\end{theorem}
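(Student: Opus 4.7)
The plan is to split $\eta^2(\widehat{\mathcal{T}})$ along the decomposition $\widehat{\mathcal{T}}=(\mathcal{T}\cap\widehat{\mathcal{T}})\,\cup\,(\widehat{\mathcal{T}}\setminus\mathcal{T})$, estimate the two pieces respectively by stability \ref{A1Stability_app} and reduction \ref{A2Reduction_app}, and then eliminate the resulting distance $\delta^2(\mathcal{T},\widehat{\mathcal{T}})$ by the generalized discrete reliability \ref{A3DiscreteReliabilty_app}. This will produce an estimate $\eta^2(\widehat{\mathcal{T}})\le A\,\eta^2(\mathcal{T})+B\,\eta^2(\widehat{\mathcal{T}})$ in which the smallness hypothesis $\widehat{\Lambda}_3(\Lambda_1^2+\Lambda_2^2)<1$ is exactly what is needed to ensure $B<1$ and to absorb the fine-level term.

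Concretely, I would first fix any $\alpha>0$ and apply the Young-type inequality $(a+b)^2\le(1+\alpha)a^2+(1+\alpha^{-1})b^2$ on each of the two subsets, using \ref{A1Stability_app} on $\mathcal{T}\cap\widehat{\mathcal{T}}$ and \ref{A2Reduction_app} on $\widehat{\mathcal{T}}\setminus\mathcal{T}$. Adding the two contributions, together with $\rho_2<1$ and $\eta^2(\mathcal{T},\mathcal{T}\cap\widehat{\mathcal{T}})+\eta^2(\mathcal{T},\mathcal{T}\setminus\widehat{\mathcal{T}})=\eta^2(\mathcal{T})$, yields
\begin{equation*}
\eta^2(\widehat{\mathcal{T}}) \le (1+\alpha)\,\eta^2(\mathcal{T}) + (1+\alpha^{-1})(\Lambda_1^2+\Lambda_2^2)\,\delta^2(\mathcal{T},\widehat{\mathcal{T}}).
\end{equation*}
Next I would substitute \ref{A3DiscreteReliabilty_app} with the trivial bound $\eta^2(\mathcal{T},\mathcal{R}(\mathcal{T},\widehat{\mathcal{T}}))\le \eta^2(\mathcal{T})$, which gives $\delta^2(\mathcal{T},\widehat{\mathcal{T}})\le(\Lambda_3+\epsilon_3)\eta^2(\mathcal{T})+\widehat{\Lambda}_3\eta^2(\widehat{\mathcal{T}})$, and rearrange to obtain
\begin{equation*}
\bigl(1-(1+\alpha^{-1})(\Lambda_1^2+\Lambda_2^2)\widehat{\Lambda}_3\bigr)\,\eta^2(\widehat{\mathcal{T}})\le\Bigl[(1+\alpha)+(1+\alpha^{-1})(\Lambda_1^2+\Lambda_2^2)(\Lambda_3+\epsilon_3)\Bigr]\eta^2(\mathcal{T}).
\end{equation*}

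The only delicate step, and the one that genuinely uses the new smallness assumption, is the absorption: since $(\Lambda_1^2+\Lambda_2^2)\widehat{\Lambda}_3<1$, one can fix $\alpha>0$ large enough so that $(1+\alpha^{-1})(\Lambda_1^2+\Lambda_2^2)\widehat{\Lambda}_3<1$ as well, whence division by the positive left-hand coefficient defines an explicit $\Lambda_{\mathrm{mon}}>0$ depending only on $\Lambda_1,\Lambda_2,\Lambda_3,\widehat{\Lambda}_3,\rho_2,\epsilon_3$ and the chosen $\alpha$. The remaining steps are routine; the conceptual novelty, and the only real obstacle compared with the original (QM) proof in \cite{CFPP14,CR16}, is precisely this absorption, which is forced by the extra term $\widehat{\Lambda}_3\eta^2(\widehat{\mathcal{T}})$ in the relaxed discrete reliability \ref{A3DiscreteReliabilty_app} and is what the structural hypothesis $\widehat{\Lambda}_3(\Lambda_1^2+\Lambda_2^2)<1$ is tailored to permit.
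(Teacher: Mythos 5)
Your proposal is correct and follows essentially the same route as the paper's proof: the same decomposition of $\eta^2(\widehat{\mathcal{T}})$ over $\mathcal{T}\cap\widehat{\mathcal{T}}$ and $\widehat{\mathcal{T}}\setminus\mathcal{T}$, the same use of \ref{A1Stability_app}--\ref{A2Reduction_app} with a weighted Young inequality, the same substitution of \ref{A3DiscreteReliabilty_app} with $\eta^2(\mathcal{T},\mathcal{R}(\mathcal{T},\widehat{\mathcal{T}}))\le\eta^2(\mathcal{T})$, and the same absorption of the term $\widehat{\Lambda}_3\eta^2(\widehat{\mathcal{T}})$ enabled by $\widehat{\Lambda}_3(\Lambda_1^2+\Lambda_2^2)<1$. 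The only difference is the cosmetic relabeling $\alpha\leftrightarrow 1/\alpha$ in the Young weights, so your resulting constant $\Lambda_{\mathrm{mon}}$ coincides with the paper's.
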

		\begin{proof}
			This proof extends \cite[Lem.~3.5]{CFPP14} and \cite[Thm.~3.2]{CR16}. 
			The axioms \ref{A1Stability_app}--\ref{A2Reduction_app} apply to the decomposition 
			$
				\eta^2(\widehat{\mathcal{T}})=\eta^2(\widehat{\mathcal{T}},\mathcal{T}\cap\widehat{\mathcal{T}})
				+\eta^2(\widehat{\mathcal{T}},\widehat{\mathcal{T}}\setminus\mathcal{T})
			$
			of the estimator of the fine triangulation $\widehat{\mathcal{T}}\in\mathbb{T}(\mathcal{T})$ and show  
			\begin{align*}
				\eta^2(\widehat{\mathcal{T}})
				&\le\big(\eta(\mathcal{T},\mathcal{T}\cap \widehat{\mathcal T})+ \Lambda_1 \delta(\mathcal{T},\widehat{\mathcal T})\big)^2
				+\big(\rho_2 \eta(\mathcal{T},\mathcal{T}\setminus\widehat{\mathcal T}) + \Lambda_2 \delta(\mathcal{T},\widehat{\mathcal T})\big)^2\\
				&\le (1+1/\alpha)\eta^2(\mathcal{T})+(1+\alpha)(\Lambda_1^2+\Lambda_2^2)\delta^2(\mathcal{T},\widehat{\mathcal T})
			\end{align*}
			with $(a+b)^2\le (1+\alpha)a^2+(1+1/\alpha)b^2$ for any positive $a,\,b$ and 
			$0<\alpha<\big((\Lambda_1^2+\Lambda_2^2)\widehat{\Lambda}_3\big)^{-1}-1$ in the second step. 
			(For $\widehat{\Lambda}_3=0$, the upper bound for $0<\alpha<\infty$ is understood  as infinity.) 
			The Axiom \ref{A3DiscreteReliabilty_app} controls the distance $\delta^2(\mathcal{T},\widehat{\mathcal{T}})$ and leads to 
			\begin{align*}
				\eta^2(\widehat{\mathcal{T}})
				\le \big(1+1/\alpha+(1+\alpha)(\Lambda_1^2+\Lambda_2^2)(\Lambda_3+\epsilon_3)\big)\eta^2(\mathcal{T})
				+(1+\alpha)(\Lambda_1^2+\Lambda_2^2) \widehat\Lambda_3 \eta^2(\widehat{\mathcal{T}}).
			\end{align*}
			Since $(1+\alpha)(\Lambda_1^2+\Lambda_2^2) \widehat\Lambda_3<1$, this proves 
			$\eta^2(\widehat{\mathcal{T}})\le \Lambda_{\mathrm{mon}}^2\eta^2(\mathcal{T})$ for 
			\begin{align*}
				\Lambda_{\mathrm{mon}}^2 :=\frac{ 1+1/\alpha+(1+\alpha)(\Lambda_1^2+\Lambda_2^2)(\Lambda_3+\epsilon_3)}
				{1-(1+\alpha)(\Lambda_1^2+\Lambda_2^2) \widehat\Lambda_3}.
		    \end{align*}	 
		\end{proof}
		The convergence is guaranteed with \eqref{eq:Convergence} and the optimality requires the sufficient smallness of the bulk parameter 
		$\theta<\Theta$ in the adaptive algorithm. This enters with the help of the comparison lemma, 
		where some $\theta_0(\varkappa,\alpha)$ depends on parameter $\varkappa,\alpha$ that allow for $\theta\le \theta_0(\varkappa,\alpha)<\Theta$.
		The lemma dates back to the seminal contribution \cite{Stevenson2007}. 
		\begin{lemma}[comparison]\label{thm:ComparisonLemma}
			Let  $0<\varkappa<1$, $0<\alpha<\infty$, and let $s>0$ satisfy 
				\begin{align*}
					M:=\sup_{N\in\mathbb{N}_0}(N+1)^s\min_{\mathcal{T}\in\mathbb{T}(N)}\eta(\mathcal{T})<\infty. 
				\end{align*}
			Then for any level $\ell \in\mathbb{N}_0$, there exist $\widehat{\mathcal{T}}_\ell\in \mathbb{T}(\mathcal{T}_\ell)$ and  
			$$\theta_0(\alpha,\varkappa):=\Big(1- \varkappa^2\big( (1+\alpha)+(1+1/\alpha) \Lambda_1^2\widehat\Lambda_3 \big)
			- (1+1/\alpha) \Lambda_1^2\epsilon_3 \Big)/\big(1+(1+1/\alpha) \Lambda_1^2  \Lambda_3 \big)<1$$
			 such that 
			\begin{enumerate}[label=(\alph*), ref=\alph*]
				\item\label{item:ComparisonLemma_a} $\eta(\widehat{\mathcal{T}}_\ell)\le
				\varkappa \eta(\mathcal{T}_\ell)\le \Lambda_{\mathrm{mon}}M|\mathcal{T}_\ell\setminus \widehat{\mathcal{T}}_\ell|^{-s}$ and  
				\item\label{item:ComparisonLemma_c} $\theta_0(\alpha,\varkappa)\eta^2(\mathcal{T}_\ell)\le \eta^2(\mathcal{T}_\ell,\mathcal{R}_\ell)$ 
									with $\mathcal{T}_\ell\setminus \widehat{\mathcal{T}}_\ell\subset\mathcal{R}_\ell
									:=\mathcal{R}(\mathcal{T}_\ell, \widehat{\mathcal{T}}_\ell)$ and 
									$|\mathcal{R}_\ell|\le \Lambda_{\mathrm{ref}} |\mathcal{T}_\ell\setminus \widehat{\mathcal{T}}_\ell|$.
			\end{enumerate}
		\end{lemma}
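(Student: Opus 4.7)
The plan is to adapt the classical Stevenson-type comparison argument to the relaxed discrete reliability \ref{A3DiscreteReliabilty_app}, which contains the additional additive term $\epsilon_3\eta^2(\mathcal{T})$ absent in \cite{CFPP14,CR16}.

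\textbf{Construction of $\widehat{\mathcal{T}}_\ell$ and proof of (\ref{item:ComparisonLemma_a}).} First I would select $N\in\mathbb{N}_0$ minimally such that $\Lambda_{\mathrm{mon}}M(N+1)^{-s}\le \varkappa\eta(\mathcal{T}_\ell)$, pick a minimizer $\mathcal{T}^\star\in\mathbb{T}(N)$ realizing $\eta(\mathcal{T}^\star)\le M(N+1)^{-s}$, and set $\widehat{\mathcal{T}}_\ell$ to be the coarsest common refinement (overlay) of $\mathcal{T}_\ell$ and $\mathcal{T}^\star$ in $\mathbb{T}(\mathcal{T}_\ell)\cap\mathbb{T}(\mathcal{T}^\star)$. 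The quasimonotonicity of \cref{thm:Quasimonotonicity} then yields $\eta(\widehat{\mathcal{T}}_\ell)\le\Lambda_{\mathrm{mon}}\eta(\mathcal{T}^\star)\le\varkappa\eta(\mathcal{T}_\ell)$, settling the first inequality in (\ref{item:ComparisonLemma_a}). The overlay estimate $|\widehat{\mathcal{T}}_\ell|\le|\mathcal{T}_\ell|+|\mathcal{T}^\star|-|\mathcal{T}_0|$ for NVB from \cite{Stev08}, together with the bisection property $|\mathcal{T}_\ell\setminus\widehat{\mathcal{T}}_\ell|\le|\widehat{\mathcal{T}}_\ell|-|\mathcal{T}_\ell|$, gives $|\mathcal{T}_\ell\setminus\widehat{\mathcal{T}}_\ell|\le N$. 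Minimality of $N$ provides a matching lower bound $N+1\ge c\bigl(M\Lambda_{\mathrm{mon}}/(\varkappa\eta(\mathcal{T}_\ell))\bigr)^{1/s}$ with an explicit $c>0$, whence $\varkappa\eta(\mathcal{T}_\ell)\le\Lambda_{\mathrm{mon}}M|\mathcal{T}_\ell\setminus\widehat{\mathcal{T}}_\ell|^{-s}$. The cardinality control $|\mathcal{R}_\ell|\le\Lambda_{\mathrm{ref}}|\mathcal{T}_\ell\setminus\widehat{\mathcal{T}}_\ell|$ with $\mathcal{R}_\ell:=\mathcal{R}(\mathcal{T}_\ell,\widehat{\mathcal{T}}_\ell)$ then follows directly from the specification attached to the axioms.

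\textbf{Bulk-chasing for (\ref{item:ComparisonLemma_c}).} I would split $\eta^2(\mathcal{T}_\ell)=\eta^2(\mathcal{T}_\ell,\mathcal{T}_\ell\cap\widehat{\mathcal{T}}_\ell)+\eta^2(\mathcal{T}_\ell,\mathcal{T}_\ell\setminus\widehat{\mathcal{T}}_\ell)$. On the common part I apply stability \ref{A1Stability_app} with the weighted Young inequality $(a+b)^2\le(1+\alpha)a^2+(1+1/\alpha)b^2$ to obtain
$$\eta^2(\mathcal{T}_\ell,\mathcal{T}_\ell\cap\widehat{\mathcal{T}}_\ell)\le(1+\alpha)\eta^2(\widehat{\mathcal{T}}_\ell)+(1+1/\alpha)\Lambda_1^2\delta^2(\mathcal{T}_\ell,\widehat{\mathcal{T}}_\ell),$$
then bound $\delta^2(\mathcal{T}_\ell,\widehat{\mathcal{T}}_\ell)$ by discrete reliability \ref{A3DiscreteReliabilty_app}. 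Using $\mathcal{T}_\ell\setminus\widehat{\mathcal{T}}_\ell\subseteq\mathcal{R}_\ell$, substituting $\eta(\widehat{\mathcal{T}}_\ell)\le\varkappa\eta(\mathcal{T}_\ell)$, collecting coefficients, and rearranging will yield
$$\bigl(1-\varkappa^2[(1+\alpha)+(1+1/\alpha)\Lambda_1^2\widehat{\Lambda}_3]-(1+1/\alpha)\Lambda_1^2\epsilon_3\bigr)\eta^2(\mathcal{T}_\ell)\le\bigl(1+(1+1/\alpha)\Lambda_1^2\Lambda_3\bigr)\eta^2(\mathcal{T}_\ell,\mathcal{R}_\ell),$$
which is exactly the claim with the stated $\theta_0(\alpha,\varkappa)$.

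\textbf{Main obstacle.} The subtlety is guaranteeing $\theta_0(\alpha,\varkappa)>0$: the weakened reliability contributes the extra term $-(1+1/\alpha)\Lambda_1^2\epsilon_3$ in the numerator of $\theta_0$ that is not present in \cite{CFPP14,CR16}. This forces the smallness hypothesis $\Lambda_1^2\epsilon_3<1$ in \cref{thm:Optimality} and restricts admissible bulk parameters $\theta\le\theta_0(\alpha,\varkappa)<\Theta$ obtainable by tuning $\alpha$ and $\varkappa$; once this bookkeeping is arranged, the remainder of the argument is routine and mirrors the classical proof.
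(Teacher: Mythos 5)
Your proposal is correct and follows essentially the same route as the paper: part (a) via the overlay construction, NVB overlay/cardinality control, and the quasimonotonicity Theorem~\ref{thm:Quasimonotonicity} (which the paper delegates to \cite{CFPP14,CR16}), and part (b) via stability \ref{A1Stability_app}, a weighted Young inequality, the relaxed discrete reliability \ref{A3DiscreteReliabilty_app}, and $\mathcal{T}_\ell\setminus\widehat{\mathcal{T}}_\ell\subseteq\mathcal{R}_\ell$, yielding exactly the paper's constants $C_a$, $C_b$ and hence the stated $\theta_0(\alpha,\varkappa)$. The only cosmetic difference is that the positivity of $\theta_0$ and the tuning of $\alpha,\varkappa$ belong to the proof of Theorem~\ref{thm:Optimality}, not to the lemma itself, as you correctly indicate.
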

		\begin{proof}
			The proof of (\ref{item:ComparisonLemma_a}) is verbatim that of \cite[Prop.~4.12]{CFPP14} or that of  
			\cite[Lem.~4.3]{CR16} based on the overlay control (i.e., \eqref{eq:overhead} below) and Theorem~\ref{thm:Quasimonotonicity}. 
			It remains to modify the proofs  in  \cite[Prop.~4.12]{CFPP14} or \cite[Lem.~4.3]{CR16} for the verification of
			(\ref{item:ComparisonLemma_c}).
			Axiom \ref{A1Stability_app} and (\ref{item:ComparisonLemma_a}) imply that
			\begin{align}
				\eta(\mathcal{T}_{\ell},\mathcal{T}_\ell\cap \widehat{\mathcal{T}}_\ell)
					&\le \eta(\widehat{\mathcal{T}}_{\ell},\mathcal{T}_\ell\cap \widehat{\mathcal{T}}_\ell)
					+\Lambda_1\delta(\mathcal{T}_{\ell},\widehat{\mathcal T}_{\ell})
					\le \varkappa \eta({\mathcal{T}}_{\ell})+\Lambda_1\delta(\mathcal{T}_{\ell},\widehat{\mathcal T}_{\ell}). 		
					\label{eq:proof_comparison_1}
			\end{align}
			Recall $\eta^2_\ell(\mathcal{M}_\ell):=\eta^2(\mathcal{T}_\ell,\mathcal{M}_\ell):= \sum_{T\in\mathcal{M}_\ell}\eta^2(\mathcal{T}_\ell,T)$ for any 
			$\mathcal{M}_\ell\subset\mathcal{T}_\ell$ 
			and   $\eta_\ell:=\eta(\mathcal{T}_\ell)\equiv\eta(\mathcal{T}_\ell,\mathcal{T}_\ell)$ and abbreviate 
			$\widehat{\eta}_\ell:=\eta(\widehat{\mathcal{T}}_\ell)\equiv\eta(\widehat{\mathcal{T}}_\ell,\widehat{\mathcal{T}}_\ell)$. 
			A weighted Young inequality with 
			$\alpha>0$, the Axiom \ref{A3DiscreteReliabilty_app} 
			with $\mathcal{R}(\mathcal{T}_\ell,\widehat{\mathcal{T}}_\ell)$ replaced by $\mathcal{R}_\ell$ 
			defined in (\ref{item:ComparisonLemma_c}), and (\ref{item:ComparisonLemma_a}) show that 
			\begin{align}
					\big(\varkappa \eta_{\ell}+\Lambda_1\delta(\mathcal{T}_{\ell},\widehat{\mathcal T}_{\ell})\big)^2
					\le& (1+\alpha)\varkappa^2 \eta^2_{\ell}
					+(1+1/\alpha) \Lambda_1^2 \big( \Lambda_3 \eta^2_\ell(\mathcal{R}_\ell)
					+ \widehat\Lambda_3 \widehat{\eta}_\ell^2+\epsilon_3 \eta^2_{\ell}\big)
					\notag\\
					\le& (1+\alpha)\varkappa^2 \eta^2_{\ell}
					+(1+1/\alpha) \Lambda_1^2 \big( \Lambda_3 \eta^2_\ell(\mathcal{R}_\ell)
					+ \widehat\Lambda_3 \varkappa^2\eta^2_{\ell}+\epsilon_3 \eta^2_{\ell}\big).\label{eq:proof_comparison_2}
			\end{align}	
			Recall $\varkappa<1$,  
			$\alpha>0$, and set  
			$$C_a:=(1+\alpha)\varkappa^2+(1+1/\alpha) \Lambda_1^2(\epsilon_3 + \widehat\Lambda_3 \varkappa^2)
			\quad \text{and}\quad 
			C_b:=(1+1/\alpha) \Lambda_1^2  \Lambda_3.$$ 
			Then the combination of \eqref{eq:proof_comparison_1}--\eqref{eq:proof_comparison_2} reads 
			\begin{align}
				\eta^2_\ell(\mathcal{T}_\ell\cap \widehat{\mathcal{T}}_\ell)
					&\le C_a \eta^2_{\ell}+ C_b\eta^2_\ell(\mathcal{R}_\ell). \label{eq:ComparisonLemma_Ca_Cb}
			\end{align}
			Since $\mathcal{T}_\ell\setminus\widehat{\mathcal{T}}_\ell\subseteq\mathcal{R}_\ell$, the estimate \eqref{eq:ComparisonLemma_Ca_Cb} 
			implies 
			\begin{align*}
				\eta^2_{\ell}
					&\le \eta^2_\ell(\mathcal{R}_\ell)+\eta^2_\ell(\mathcal{T}_\ell\cap \widehat{\mathcal{T}}_\ell)
				    \le C_a \eta^2_{\ell}+(1+C_b)\eta^2_\ell(\mathcal{R}_\ell). 
			\end{align*}
			This proves (\ref{item:ComparisonLemma_c}) with 
			\begin{align*}
				\frac{1-C_a}{1+C_b}
				=\frac{1- \big( (1+\alpha)\varkappa^2+(1+1/\alpha) \Lambda_1^2(\epsilon_3 + \widehat\Lambda_3 \varkappa^2)\big) }
						{1+(1+1/\alpha) \Lambda_1^2  \Lambda_3 }=\theta_0(\varkappa,\alpha)<1.
			\end{align*}
		\end{proof}
		
		The proof of Theorem~\ref{thm:Optimality} can be concluded as in \cite[Proof of Theorem 4.1 (ii)]{CFPP14} or 
		\cite[Section 4.3]{CR16}.
		The function $\theta_0(\alpha,\varkappa)$ in \cref{thm:ComparisonLemma}.\ref{item:ComparisonLemma_c} is bounded from above by 
		$\lim_{\alpha\to\infty} \theta_0(0,\alpha)=({1-\Lambda_1^2\epsilon_3})/({1+\Lambda_1^2  \Lambda_3})$ and there exist a choice of 
		$0<\varkappa<1$ and $0<\alpha<\infty$
		such that $0<\theta<\theta_0(\alpha,\varkappa)< \Theta$. 
		This is the first formula on page~2655 in \cite{CR16} and the remaining parts of the proof are summarized below 
		for convenient reading and almost verbatim to Case~A in \cite{CR16}. 
		The choice of $\theta$ and \cref{thm:ComparisonLemma}.\ref{item:ComparisonLemma_c} show  
		$$\theta \eta^2(\mathcal{T}_\ell)\le \theta_0(\alpha,\varkappa)\eta^2(\mathcal{T}_\ell)\le \eta^2(\mathcal{T}_\ell,\mathcal{R}_\ell),$$ i.e., 
		$\mathcal{R}_\ell$ satisfies the D\"orfler marking condition. 
		Recall that $\mathcal{M}_\ell$ denotes the set of marked elements on level $\ell$ in \namecref{alg:AFEM}, 
		while ${\mathcal{M}}^\star_\ell$ with $|{\mathcal{M}}^\star_\ell|=M_\ell$ is a minimal set of marked elements. 
		Then  there exists $\Lambda_{\mathrm{opt}}\ge 1$ with 
		$|\mathcal{M}_\ell|\le \Lambda_{\mathrm{opt}}M_\ell\le \Lambda_{\mathrm{opt}}|\mathcal{R}_\ell|$. 
		The control over $\mathcal{R}_\ell:=\mathcal{R}(\mathcal{T}_\ell, \widehat{\mathcal{T}}_\ell)$ and 
		\cref{thm:ComparisonLemma}.\ref{item:ComparisonLemma_a} ensure 
		$$
		|\mathcal{R}_\ell|\le \Lambda_{\mathrm{ref}}|\mathcal{T}_\ell\setminus\widehat{\mathcal{T}}_\ell|\le \Lambda_{\mathrm{ref}}
		\big(\Lambda_{\mathrm{mon}}M/(\varkappa \eta_\ell)\big)^{1/s}.$$
		Hence $|\mathcal{M}_\ell|\le C_c M^{1/s}\eta_\ell^{-1/s}$ with 
		$C_c:=\Lambda_{\mathrm{opt}}\Lambda_{\mathrm{ref}}\Lambda_{\mathrm{mon}}^{1/s}\varkappa^{-1/s}$. 
		One important ingredient of NVB is the 		overhead control \cite{BDdV04,Stev08}
		\begin{align}
			|\mathcal{T}_\ell|-|\mathcal{T}_0|\le \Lambda_{\mathrm{BDdV}}\sum_{k=0}^{\ell-1}|\mathcal{M}_k| \label{eq:overhead}
		\end{align}
		with a universal constant $\Lambda_{\mathrm{BDdV}}$ that exclusively depends on $\mathcal{T}_0$. 
		The combination  of the above with the overhead control leads to 
		 \begin{align}
		 	|\mathcal{T}_\ell|-|\mathcal{T}_0|\le \Lambda_{\mathrm{BDdV}} C_c M^{1/s}\sum_{k=0}^{\ell-1}\eta_k^{-1/s}. \label{eq:toOptimality}
		 \end{align} 
		 The R-linear convergence \eqref{eq:Convergence}  bounds the sum $\sum_{k=0}^{\ell-1}\eta_k^{-1/s}$ as in \cite[Thm.~4.2.c]{CR16}. 
		 For all $0\le k<\ell$, the second identity in  \eqref{eq:Convergence} implies 
		 $\eta_{k}^{-1/s}\le \eta_{\ell}^{-1/s}{q_c^{(\ell-k)/(2s)}}{(1-q_c)^{-1/(2s)}}$. Hence the formula for the partial sum of 
		 the geometric series shows 
		 \begin{align}
		  \sum_{k=0}^{\ell-1}\eta_k^{-1/s}\le C_d \eta_\ell^{-1/s}\quad\text{with}\quad 
		  C_d:=\frac{q_c^{1/(2s)}}{\big(1-q_c^{1/(2s)}\big)(1-q_c)^{1/(2s)}}.\label{eq:Rsum}
		 \end{align}
		The combination of \eqref{eq:toOptimality}--\eqref{eq:Rsum} 
		reads $|\mathcal{T}_\ell|-|\mathcal{T}_0|\le \Lambda_{\mathrm{BDdV}} C_cC_d M^{1/s} \eta_\ell^{-1/s}$. 
		 Hence $1\le |\mathcal{T}_\ell|-|\mathcal{T}_0|$ implies 
		$(1+ |\mathcal{T}_\ell|-|\mathcal{T}_0|)\le 2(|\mathcal{T}_\ell|-|\mathcal{T}_0|)\le 2\Lambda_{\mathrm{BDdV}} C_cC_d M^{1/s}  \eta_\ell^{-1/s}$,  
		while $|\mathcal{T}_\ell|=|\mathcal{T}_0|$ implies $1\le  \Lambda_{\mathrm{BDdV}} C_cC_dM^{1/s}  \eta_\ell^{-1/s}$. 
		This  concludes the proof of  
		$$\eta_\ell(1+  |\mathcal{T}_\ell|-|\mathcal{T}_0|)^s\le (2\Lambda_{\mathrm{BDdV}} C_c C_d)^s  M\text{ with } 
		M:=\sup_{N\in\mathbb{N}_0}(N+1)^s\min_{\mathcal{T}\in\mathbb{T}(N)}\eta(\mathcal{T})$$ and so of   
		\ldq{$\lesssim$}\rdq in \cref{thm:Optimality}.
		\vspace{\bigskipamount}
		
		For the proof of the converse implication, 
		assume, without loss of generality, that $ 0<\min_{\mathcal{T}\in\mathbb{T}(N)}\eta(\mathcal{T})$ and so $0<\eta_\ell$ for any 
		$\ell\in\mathbb{N}_0$ with $N_\ell:=|\mathcal{T}_\ell|-|\mathcal{T}_0|\le N$. \namecref{alg:AFEM} leads to $N_\ell<N_{\ell+1}$ 
		(since no refinement only occurs for $\eta_\ell=0$). 
		Hence there exists a level $\ell$ with $N_\ell<N\le N_{\ell+1}$ and 
		$(N+1)^s \min_{\mathcal{T}\in\mathbb{T}(N)}\eta(\mathcal{T})\le (N_{\ell+1}+1)^s\eta_\ell$.
		On each refinement level $\ell$ each simplex creates at most a finite number $K(n)$ 
		(depending only on the spatial dimension $n$) of children in the next level $\ell+1$ \cite{GSS14}. In other words 
		$|\mathcal{T}_{\ell+1}|\le K(n)|\mathcal{T}_\ell|$ and 
		$(N_{\ell+1}+1)/(N_\ell+1)\le K(n) +(K(n)-1)(|\mathcal{T}_0|-1)\lesssim 1$. 
		This concludes the proof of rate optimality for \namecref{alg:AFEM} in  \cref{thm:Optimality}.  \hfill$\Box$
		
		\paragraph{Proof of \cref{thm:Optimality4GLB}.}\hspace{-.6cm}
		The \namecref{alg:AFEM4EVP} in \cref{thm:Optimality4GLB} is a particular case with 
		$\mathcal{R}(\mathcal{T},\widehat{\mathcal{T}})\hspace{-1mm}:=\mathcal{R}_1:=\{K\in\mathcal{T}:\,\exists\, T\in \mathcal{T}\setminus\widehat{\mathcal{T}} 
		 			\text{ with }\allowbreak \textup{dist}(K,T)=~0\}$. 
		\cref{thm:A1_A2}, \ref{thm:A34EVP}, and \ref{thm:A44EVP} guarantee \ref{A1Stability}--\ref{A4epsilon_Quasiothogonality} with 
		 $\widehat{\Lambda}_3:=0$, $\epsilon_3:= {M}_3{h}_{\max}^{2\sigma}$, 
		 and $\epsilon_4:=\widetilde{\Lambda}_4(\beta+h_0^{2\sigma}(1+1/\beta))>0$. Once $\rho_{12}$ and $\Lambda_{12}$ have been selected, 
abbreviate $c_3:=(1-\rho_{12})/(2\Lambda_{12}\widetilde{\Lambda}_4)$,
$\beta:= \min\{C_{\mathrm{eff}}^2/C_{\mathrm{rel}}^2,c_3/2\}$, and 
\begin{equation}\label{defconsteps7}
	\varepsilon:=\varepsilon_7:=\min\big\{\varepsilon_6,
					 (2\Lambda_1^2 M_3)^{-1/(2\sigma)}, ((c_3-\beta)/(1+1/\beta))^{1/(2\sigma)}\big\}.
		\end{equation}				  
Then $\widehat{\Lambda}_3(\Lambda_1^2+\Lambda_2^2)=0$, $\epsilon_3 \Lambda_1^2\le 1/2$, 
		and $\epsilon_4\le(1-\rho_{12})/(2\Lambda_{12})$ in \cref{thm:Optimality}. 

		\begin{remark}[smallness assumptions on $\varepsilon_5,\varepsilon_6,\varepsilon_7$]
			The reduction to $\varepsilon_5$ guarantees   the best approximation result in \cref{thm:BoundInterpolationError}, while
			 $\varepsilon_6:=\min\{\varepsilon_5,(2C_5^2)^{-1/(2\sigma)}\}$ is sufficient 
			for reliability in \cref{thm:reliabilty+efficiency}. Optimal rates follow  with  $\varepsilon:=\varepsilon_7$ from \eqref{defconsteps7}. 	
			Since  $C_5$ from  \eqref{eq:L2errorEnergyError},  
			$c_3:=(1-\rho_{12})/(2\Lambda_{12}\widetilde{\Lambda}_4)$, and $M_3$ are bounded $\mathcal{O}(1)$, 
			independent of the 
			mesh-size,  
			 $\varepsilon_6=\min\{\varepsilon_5,\mathcal{O}(1)\}$ and $\varepsilon_7=\min\{\varepsilon_6,\mathcal{O}(1)\}$ 
			are {\em not } expected to be  dramatically smaller than $\varepsilon_5$. 
		\end{remark} 
		
		\begin{remark}[modification with global convergence]
			\hspace{-0.7pt}The modified algorithm of \cref{sec:modifiedAlgo}, with $\mathcal{T}_L,\mathcal{T}_{L+1},\dots$ has 	
			no influence on the constants $1/2\le \Theta(1+\Lambda_1^2+\Lambda_3)\le 1$, 
			$\Lambda_4\le\Lambda_{\mathrm{qo}}\le 2\Lambda_4+1/\Lambda_{12}$,  
			$1+(\Lambda_1^2+\Lambda_2^2)\Lambda_3\le \Lambda_\mathrm{mon}^2
			\le \big(1+\sqrt{(\Lambda_1^2+\Lambda_2^2)(\Lambda_3+\Lambda_1^{-2}/2)}\big)^2 $.			
			But  $\Lambda_{\mathrm{BDdV}}$ in the overhead control \eqref{eq:overhead} (e.g. \cite[Thm.~6.1]{Stev08})
			depends on $\mathcal{T}_L$ and 
			could become larger (when replacing $\mathcal{T}_0$ by $\mathcal{T}_L$)  and leads to  larger equivalence constants in \cref{thm:Optimality}.
			Fortunately, the asymptotic convergence rate remains optimal and  
			 the choice of $\theta$ is not affected. 
		\end{remark}
		
		\begin{remark}[parameter choice in praxis]
			In a practical computation, we suggest uniform mesh-refinement until the eigenvalue $\lambda_k$ 
			of interest is resolved in that $5h_{\max}$ is smaller or equal the estimated wavelength of $\lambda_k$.  
			This triangulation serves as initial triangulation in $\mathcal{T}_0$ in the modified algorithm of \cref{sec:modifiedAlgo} with 
			some bulk parameter $\theta$  smaller than $(1-\Lambda_1^2\Lambda_3)^{-1}$. 
			In this way, the pre-asymptotic range is (hopefully) kept small while the asymptotic convergence rate remains optimal. 
		\end{remark}
		
\FloatBarrier		


\newpage\footnotesize{
\bibliographystyle{alpha}
\bibliography{BibGLB}
}

\end{document}